\documentclass{article}
\usepackage{amsthm,amsmath,amsfonts,amssymb}
\usepackage{enumitem}
\usepackage{mathrsfs}
\usepackage{color}
\usepackage{hyperref}
\usepackage[numbers]{natbib}

\title{Efficient Bayesian Inference in Strictly Semi-parametric Linear Inverse Problems}

\author{Adel Magra, Aad Van der Vaart}

\newcommand{\R}{\mathcal{R}}

\newcommand{\ip}[2]{\left \langle #1, #2 \right \rangle} 
\newcommand{\eps}{\varepsilon}
\newcommand{\q}{\theta}
\newcommand{\RR}{\mathbb{R}}
\newcommand{\CC}{\mathbb{C}}
\newcommand{\gaatp}{\overset{P}{\to}}
\newcommand{\g}{\gamma}
\newcommand{\D}{\mathcal{D}}
\newcommand{\HHH}{\mathbb{H}}

\newcommand{\p}{\varphi}
\newcommand{\DD}{\mathbb{D}}
\newcommand{\KK}{\mathcal{K}}

\theoremstyle{plain}
\newtheorem{thm}{Theorem}[section]
\newtheorem{lemma}[thm]{Lemma}
\newtheorem{cor}[thm]{Corollary}
\newtheorem{prop}[thm]{Proposition}

\newtheorem{asspt}[thm]{Assumption}
\newtheorem{claim}[thm]{Claim}
\newtheorem{model}{Model}

\newtheorem{condition}[thm]{Condition}
\theoremstyle{definition}

\begin{document}
	
\maketitle

\begin{abstract}
	We consider the efficient inference of finite dimensional parameters arising in the context of inverse problems. Our setup is the observation of a transformation of an unknown infinite dimensional signal $f$ corrupted by statistical noise, with the transformation $K_\theta$ being linear but unknown up to a scalar $\theta$. We adopt a Bayesian approach and put a prior on the pair $(\theta,f)$ and prove a Bernstein-von Mises theorem for the marginal posterior of $\theta$ under regularity conditions on the operators $K_\theta$ and on the prior. We apply our results to the recovery of location parameters in semi-blind deconvolution problems and to the recovery of attenuation constants in X-ray tomography. 
\end{abstract}

\section{Introduction}

Let $\Theta\subset \mathbb{R}$ be a compact set, and let $(H_1,\ip{\cdot}{\cdot}_{H_1}), (H_2, \ip{\cdot}{\cdot}_{H_2})$ be (infinite dimensional) separable Hilbert spaces. Consider a family of continuous linear operators $\{K_\theta: H_1\to H_2,\ \theta\in \Theta\}$. Suppose we have access to the following signal in white noise data,
\begin{align}\label{eq: obs}
	X^{(n)} = K_\theta f + \frac{1}{\sqrt{n}}\dot W,
\end{align}
where $n\in\mathbb{N}$ is the signal to noise ratio and with $\dot W$ the iso-Gaussian process for $H_2$ (which will typically be a space of square integrable functions). We take a Bayesian approach and endow the pair $(\theta, f)$ with a product prior $\Pi = \pi_\theta\times \pi_f$, with $\pi_\theta$ having positive density with respect to (w.r.t) the Lebesgue measure on $\Theta$. We assume that there exists a true pair $(\theta_0, f_0)$ that generated the data \eqref{eq: obs}. We are interested in uncertainty quantification results for $\theta_0$ in the asymptotic regime, i.e. when $n\to\infty$. 

Our main result is a Bernstein-von Mises (BvM) theorem for the marginal posterior of $\theta$. This is obtained first through ensuring that the model provides sufficient identifiability for $\theta$, and further provided that $K_\theta f$ is sufficiently regular in both $\theta$ and $f$.

Bernstein-von Mises theorems in nonparametric Bayesian inverse problems have received considerable attention in recent years. Typically, one observes a noisy version of $Kf$, with $K$ a known operator and $f$ an unknown infinite dimensional parameter, and what is sought is to derive (semi-parametric) BvM theorems for finite dimensional aspects of $f$, i.e. linear functionals of $f$. This has been studied for the case of linear $K$ (e.g. in \cite{Knapiketal2011, GiordanoKekkonen}) as well as nonlinear $K$ (e.g. in \cite{Nickl2018, monard2019, MonardNicklPternain2021}), with the latter presenting considerably more challenges. 

In our context, the uncertainty is both on the operator $K$ and the function $f$. Although the map $f\mapsto K_\theta f$ is linear, the map $(\theta, f)\mapsto K_\theta f$ typically isn't. Our setup does therefore technically fall in the category of nonparametric Bayesian nonlinear inverse problems. The approach typically taken in those  models to obtain BvM results is to first prove contraction of the full posterior at the truth $(\theta_0,f_0$) at a polynomial rate (w.r.t $n$), and then to use this localisation to control the difference of the log-likelihood ratio with its local asymptotic normal counterpart (see Chapter 4 in\cite{Nickl23} for a detailed overview). This approach has  already been undertaken in our ``strict" model (in the sense that the parametric part of the unknown $\theta$ is separated from the nonparametric part $f$) in \cite{magra_VdM_VdV} but with a specific nonlinear $f\mapsto K_\theta f$ stemming from the solution map of a parabolic PDE 

The fact that the operators $K_\theta$ are linear in our setup will allow for more leeway with regards to contraction. We will show that contraction of the full posterior at $(\theta_0, K_{\theta_0}f_0)$ (and potentially at $\dot K_{\theta_0}f_0$, the derivative of $K_\theta f$ w.r.t $\theta$) is enough of a requirement in our case. For compact $K_\theta$ we have that $K_\theta f$ is typically much smoother than $f$, and so obtaining polynomial rates of contraction can be considerably easier. This is particularly useful in severely ill-posed inverse problems where BvM results based on contraction at $f_0$ are hopeless, whereas contraction at $K_{\theta_0} f_0$ may be achieved at an almost parametric level. Interestingly, increased ill-posedness is beneficial in the following context of inverse problems. This phenomenon has also been observed in a similar problem in \cite{magra_VdV_HvZ} where one has access to  \eqref{eq: obs} in addition to a noisy observation of $f$.

We present our main BvM results in Section 2. We specify them to a class of re-scaled Gaussian priors in Section 3 and apply them to semi-blind deconvolution problems in Section 4 and to attenuated X-ray transforms in Section 5. Section \ref{sec: proofs} is dedicated to proofs of results not found in other sections.

\section{Main Results}

\subsection{Model Identifiability and Information}

Let $(\Theta,H)\subseteq (\RR,H_1)$ be the parameter set for $(\theta,f)$, where $\Theta$ is a compact subset of $\RR$ and $H$ is a linear subspace of $H_1$ equipped with a norm $\|\cdot\|_H$. For any pair $(\theta,f)\in\Theta\times H$, we let $P_{\theta,f}^n$ be the distribution of the process in \eqref{eq: obs}. Letting $P_0^n$ be the distribution of $\sqrt{n}^{-1}\dot{W}$, we obtain the following expression for the log-likelihood,
\begin{align}\label{eq: likelihood}
	\log{\dfrac{dP_{\theta,f}^n}{dP_0^n}}(X^{(n)}) = n\ip{K_\theta f}{X^{(n)}}_{H_2} - \dfrac{n}{2}\|K_\theta f\|_{H_2}^2.
\end{align}
We assume the existence of a derivative for the map $\theta\mapsto K_\theta f$ on $\Theta$, defined as follows,
\begin{align*}
	\dfrac{K_{\theta+t}f - K_\theta f}{t} \to \dot K_\theta f \text{ as }t\to 0,
\end{align*}
where the limit is taken in the $H_2$-norm. To study our model, we consider one-dimensional sub-models of the type $t\mapsto (\theta + t, f + gt)$ for a given direction $g\in H$. The linearity of $K_\theta$ implies that the following limit holds in the $H_2$-norm as $t\to0$,
\begin{align*}
	\dfrac{K_{\theta+t}(f+gt) - K_\theta f}{t} = \dfrac{K_{\theta+t}f - K_{\theta}f}{t} + {K_{\theta+t}g} \to \dot K_\theta f + K_\theta g.
\end{align*}
The local asymptotic normality (LAN) of this sub-model can subsequently be derived by looking at the limit of the log-likelihood ratios as $n\to\infty$,
 \[\log{\dfrac{dP^n_{\theta+t/\sqrt{n}, f+gt/\sqrt{n}}}{dP^n_{\theta, f}}}\to t\ip{\dot K_\theta f + K_\theta g}{\dot W}_{H_2} -\dfrac{1}{2}t^2\|\dot K_\theta f + K_\theta g\|_{H_2}^2. \] 
 The Fisher information for estimating $\theta$ in this sub-model (at $t=0$) is $\|\dot K_\theta f + K_\theta g\|^2.$ The efficient Fisher information is the infimum of this quantity over all possible directions $g\in H$. This is easily seen as the projection of $-\dot K_\theta f$ onto the closure of $K_\theta H$. Consequently, if a function $-\gamma_{\theta, f}\in H$ satisfies,
 \begin{align}\label{eq: lfd}
	\ip{\dot K_\theta f - K_\theta \gamma_{\theta,f}}{K_\theta h}_{H_2}=0,\ \forall h\in H,
 \end{align} 
 then it is a minimiser of the Fisher information. We call $\gamma_{\theta,f}$ a \textit{least favourable direction} (LFD) because it corresponds to a sub-model yielding the smallest Fisher information, namely the efficient information which is then equal to,
 \begin{align}\label{eq: info}
 	\tilde{i}_{\theta,f} = \|\dot K_\theta f - K_\theta \gamma_{\theta, f}\|_{H_2}^2 = \|\dot K_\theta f\|_{H_2}^2 - \|K_\theta \gamma_{\theta,f}\|_{H_2}^2.
 \end{align}
 In the special case that $\gamma_{\theta,f}=0$, estimation of $\theta$ in all sub-models is equivalent and the efficient information is $\|\dot K_\theta f\|^2$, which is the information in the model where $f$ is known. This generally does not happen unless there is some sort of nice relationship between $K_\theta$ and the space $H$. The last display also makes apparent that $\tilde{i}_{\theta,f} < \|\dot K_\theta f\|^2$ in general. Note also that a least favourable direction need not exist, or that there might be multiple least favourable directions depending on the model. In the latter case they will all however yield the same efficient information which is given by \eqref{eq: info}.
 

To start thinking of a BvM result for the marginal posterior of $\theta$, it is essential that $\tilde{i}_{\theta,f}>0$. We will assume throughout that this is the case. In concrete applications, this assumption can be verified based on the structure of the model considered and through assumptions on the true pair of parameters $(\theta, f)$ .



\subsection{A first BvM}

Assume that a true pair $(\theta_0,f_0)\in \Theta\times H$ generates the data in model \eqref{eq: obs}. We consider a product prior $\Pi:=\pi_\theta\times\pi_f$, where $\pi_\theta$ has a positive density w.r.t the Lebesgue measure, and with $\pi_f$ a distribution supported on $H$. Saying that the \textit{Bernstein-von Mises theorem holds at $(\theta_0, f_0)$} means that in $P_{\theta_0, f_0}^n$-probability, as $n\to\infty$, 
\begin{align}
	\Big\|\Pi(\theta \in \cdot \mid X^{(n)}) - 
	N\Big(\theta_0 + \frac{1}{\sqrt{n}} \Delta_{ \theta_0, f_0}^n, 
	\frac{1}{n}\tilde i^{-1}_{\theta_0, f_0}\Big)\Big\|_{\text{TV}}\to 0,
	\label{EqBvMTheorem}
\end{align}
where  $\Delta_{ \theta_0, f_0 }^n$ are measurable
transformations of $X^{(n)}$ such that $\Delta_{ \theta_0, f_0 }^n\sim N(0, \tilde i^{-1}_{\theta_0, f_0})$. 
In all our results the latter variables satisfy
\[\Delta_{\theta,f}^n
=\frac{1}{\tilde i_{\theta,f}}\ip{\dot K_{\theta}f-K_{\theta}\gamma_{\theta,f}}{\dot W}.\]
In other words, the variables $\Delta_{ \theta_0, f_0 }^n$ are the efficient score functions at $(\theta_0,f_0)$ divided by
the efficient Fisher information. The BvM presented in \eqref{EqBvMTheorem} uses the
total variation norm, given by $\|P-Q\|_{\text{TV}}:=2\sup_{F\in\mathcal{F}} |P(F)-Q(F)|$.





We prove a BvM under general conditions. We assume that there exists a normed space $S\subset H_2$ with a stronger norm $\|\cdot\|_S$ than $H_2$. We suppose that $K_\theta H\subset S$. If $\|\cdot\|_{H}\approx\|\cdot\|_{H_1}$, then this means that $K_\theta$ is ``smoothing", as is the case in ill-posed problems like the examples of Section 4. However this is not a necessity as the X-ray transform example of Section 5 illustrates. A key requirement is that the unit ball in $S$ must have a finite entropy integral in $H_2$:

\begin{align}\label{eq: finiteEntropy}
	\int_0^1 \sqrt{\log{N(\eps, \{h:\|h\|_S\leq 1\}, \|\cdot\|_{H_2})}}d\eps < \infty,
\end{align}
where for a metric space $(A,d)$, $N(\eps, A, d)$ is the minimum number of $d$-balls of radius $\eps$ needed to cover $A$.

We further make the following regularity assumptions on the map $\theta\mapsto K_\theta$.

\begin{asspt}\label{asspt1}
There exist constants $D_1, D_2, D_3$ and $D_4$ such that $\forall (\theta,\theta_0)\in\Theta$ and $f\in H$,
\begin{enumerate}[label=(\roman*),leftmargin=*, itemsep=0.5ex, before={\everymath{\displaystyle}}]
	\item $\|K_\theta f\|_{H_2} \leq D_1\|f\|_{H_1}, \ \|\dot K_\theta f\|_{H_2}\leq D_1\|f\|_{H_1}$,
	\item $\|(K_\theta - K_{\theta_0})f\|_{S} \leq D_2|\theta-\theta_0|\|f\|_{H}$,
	\item $\|(K_\theta-K_{\theta_0}-(\theta-\theta_0)\dot K_{\theta_0})f\|_{S}\leq D_3|\theta-\theta_0|^2\|f\|_{H},$
	\item $\|\dot K_{\theta_0}(f-f_0)\|_{S}\leq D_4\|f-f_0\|_{H}.$
\end{enumerate}
\end{asspt}

Item (i) simply states that both $K_\theta$ and $\dot K_\theta f$ are uniformly bounded over $\Theta$. Item (ii) is a Lipschitz estimate for $K_\theta f$ at $\theta_0$ as a map from $\Theta$ into $S$. Item (iii) says something similar, but this time about the quality of the linear approximation of $K_\theta-K_{\theta_0}$ at $\theta_0$. Observe that item (iv) can be inferred from items (ii) and (iii) by applying them to $f-f_0$. We still leave it as an assumption for convenience. 

Our first theorem is a semi-parametric BvM for the marginal posterior at $\theta_0$. It is an application of Theoerem 12.9 in \cite{vaartghosal}. To hold, the latter requires in particular that the LAN expansions of the log-likelihood ratios of ``least-favourable transformations" at $(\theta,f)\in\Theta\times H$ be uniformly controlled over sets of posterior mass tending to one. Assumption \ref{asspt1} provides the regularity needed to ensure that the remainder terms in these expansions are of the right order, at least on sets where $\|f\|_{H}$ is bounded and where $\theta, K_{\theta_0}f$ and $\dot K_{\theta_0}f$ are respectively close to $\theta_0, K_{\theta_0}f_0$ and $\dot K_{\theta_0}f_0$. This is why we impose that  $H$-balls of a large enough radius have posterior mass tending to 1, and the reason for our posterior consistency assumptions. Saying that the posterior \text{is consistent at $(\theta_0,f_0)$} with respect to (w.r.t) a semi-metric $d$ means that $\Pi(d((\theta,f),(\theta_0,f_0))<\delta_n|X^{(n)})\to 1$ in $P^n_{\theta_0,f_0}$-probability for some $\delta_n\downarrow 0$ referred to as the \textit{rate of contraction}. Finally, the prior $\pi_f$ on $f$ must exhibit some insensitivity to shifts in the least favourable direction.

\begin{thm}\label{thm: main}
	Let $(\theta_0,f_0)\in\Theta\times H$ be such that $\tilde i_{\theta_0,f_0}>0$. Suppose Assumption \ref{asspt1} is satisfied. Assume that for $M$ large enough, the posterior mass of $\{f: \|f\|_{H}<M\}$ tends to 1. Assume further that the posterior is consistent at $(\theta_0,f_0)$ w.r.t the semi-metric $|\theta-\theta_0| + \|K_{\theta_0}(f-f_0)\|_{H_2}$ and  w.r.t $\|\dot K_{\theta_0}(f-f_0)\|_{H_2}$. Assume as well that there exists a sequence $\gamma_n$ approximating $\gamma_{\theta_0,f_0}$ at a rate $\rho_n\downarrow 0$ w.r.t $\|\cdot\|_{H_1}$. Finally, assume the existence of subsets $\Theta_n\subset\Theta$ and $H_n \subset H$ such that
	$\sqrt n(\Theta_n-\q_0)\uparrow\RR$ and 
	\begin{align}\label{eq: contraction}
		\Pi(\Theta_n\times H_n | X^n)\gaatp 1, 
		\qquad \inf_{\theta\in\Theta_n}\Pi^{\theta=\theta_0}\bigl(H_n + (\theta-\theta_0)\gamma_n | X^n\bigr) &\gaatp 1,\\
		\label{eq: lfdrate}
		\sqrt{n}\sup_{f\in H_n}\bigl|\ip{K_{\theta_0}(f-f_0)}{K_{\theta_0}(\gamma_{\theta_0,f}-\gamma_n)}\bigr|&\to 0,\\
		\label{eq: priorshift}
		\sup_{\theta\in\Theta_n, f\in H_n} \left| \frac{\log(d\pi_{f+(\theta-\theta_0)\gamma_n}/d\pi_f)(f)}{1+n(\theta-\theta_0)^2}\right| &\to 0.
	\end{align}
	Then the BvM holds at $(\theta_0,f_0)$.
\end{thm}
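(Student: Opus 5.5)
We will deduce the result from Theorem 12.9 in \cite{vaartghosal}, the semiparametric BvM for strict semiparametric models with least favourable transformations. We take the transformations $\psi_\theta(f) = f + (\theta-\theta_0)\gamma_n$, so that $(\theta,\psi_\theta(f))$ traces approximately least favourable submodels through $(\theta_0,f)$. That theorem asks for three ingredients, and we verify them in turn:
\begin{itemize}
\item[(a)] the posterior concentrates on $\Theta_n\times H_n$, and the $\theta_0$-conditional posterior concentrates on the shifted sets $H_n+(\theta-\theta_0)\gamma_n$ uniformly in $\theta\in\Theta_n$;
\item[(b)] the prior is insensitive to the shift, in the sense of \eqref{eq: priorshift};
\item[(c)] a uniform LAN expansion holds on these sets.
\end{itemize}
Items (a) and (b) are assumed directly in \eqref{eq: contraction} and \eqref{eq: priorshift}, so the work is in (c).

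For (c) we write, using \eqref{eq: likelihood} with $X^{(n)} = K_{\theta_0}f_0 + n^{-1/2}\dot W$,
\[\log\frac{dP^n_{\theta,f+(\theta-\theta_0)\gamma_n}}{dP^n_{\theta_0,f}} = \sqrt n\ip{D}{\dot W} + n\ip{D}{K_{\theta_0}(f_0-f)} - \frac n2\|D\|^2,\]
where $D = K_\theta f - K_{\theta_0}f + (\theta-\theta_0)K_\theta\gamma_n$. We then decompose $D = (\theta-\theta_0)\bigl(\dot K_{\theta_0}f - K_{\theta_0}\gamma_n\bigr) + R$. Here $-K_{\theta_0}\gamma_n$ is understood with the sign convention of \eqref{eq: lfd}, so that the shift runs along $-\gamma$. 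The remainder $R$ collects two pieces:
\begin{itemize}
\item the second-order term $(K_\theta - K_{\theta_0} - (\theta-\theta_0)\dot K_{\theta_0})f$, which is $O(|\theta-\theta_0|^2)$ in $S$ by Assumption \ref{asspt1}(iii) on $\{\|f\|_H<M\}$;
\item the term $(\theta-\theta_0)(K_\theta - K_{\theta_0})\gamma_n$, of the same order in $S$ by (ii). This requires $\|\gamma_n\|_H$ to be bounded, or at least to grow slowly, which we will check or absorb.
\end{itemize}
The target expansion is $h\,\tilde\Delta_n - \tfrac12 h^2\tilde i_{\theta_0,f_0}$ with $h=\sqrt n(\theta-\theta_0)$ and $\tilde\Delta_n=\ip{\dot K_{\theta_0}f_0-K_{\theta_0}\gamma_{\theta_0,f_0}}{\dot W}$, up to $o_P(1+h^2)$ uniformly over $\Theta_n\times H_n$.

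The three pieces are handled as follows.

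\emph{Stochastic term.} We must show $\sup\sqrt n|\ip{D}{\dot W} - (\theta-\theta_0)\ip{\dot K_{\theta_0}f_0-K_{\theta_0}\gamma_{\theta_0,f_0}}{\dot W}|=o_P(1+h^2)$. Dividing by $|\theta-\theta_0|$, this reduces to showing that $\sup|\ip{g}{\dot W}|\to0$ in probability over the class of differences
\[g\in\{\dot K_{\theta_0}(f-f_0),\ K_{\theta_0}(\gamma_n-\gamma_{\theta_0,f_0}),\ R/(\theta-\theta_0)\}.\]
These lie in a bounded $S$-ball, using Assumption \ref{asspt1}(ii)--(iv) and $\|f\|_H<M$. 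Their $H_2$-norms tend to zero by the consistency w.r.t.\ $\|\dot K_{\theta_0}(f-f_0)\|_{H_2}$, by $\rho_n\to0$ together with (i), and by $|\theta-\theta_0|\to0$. The finite entropy integral \eqref{eq: finiteEntropy} then yields, via Dudley's bound and asymptotic equicontinuity of the iso-Gaussian process on the $S$-ball, that the supremum of $|\ip{g}{\dot W}|$ over elements of $H_2$-norm $\le\delta_n$ tends to zero. This empirical-process step is the main obstacle: one must check that the indexing sets are genuinely contained in a fixed multiple of the $S$-unit ball, with $\theta$-dependence handled uniformly.

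\emph{Quadratic term.} Consistency and (i) give $\|\dot K_{\theta_0}f-K_{\theta_0}\gamma_n\|^2\to\tilde i_{\theta_0,f_0}$ uniformly. The remainder $R$ contributes $n|\theta-\theta_0|^3=o(1+h^2)$ because $\theta-\theta_0\to0$.

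\emph{Bias term.} The term $n(\theta-\theta_0)\ip{\dot K_{\theta_0}f-K_{\theta_0}\gamma_n}{K_{\theta_0}(f_0-f)}$ is where least favourability enters. We split it as
\[\ip{\dot K_{\theta_0}f-K_{\theta_0}\gamma_{\theta_0,f}}{K_{\theta_0}(f_0-f)} + \ip{K_{\theta_0}(\gamma_{\theta_0,f}-\gamma_n)}{K_{\theta_0}(f_0-f)}.\]
The first piece vanishes by \eqref{eq: lfd} applied at $f$ with $h=f_0-f$. The second piece, multiplied by $\sqrt n$, is $o(1)$ by \eqref{eq: lfdrate}. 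Together with the extra factor $h$, the whole term is $o(|h|)=o(1+h^2)$. The $R$-part of the bias is bounded by $n|\theta-\theta_0|^2\|K_{\theta_0}(f-f_0)\|_{H_2}=o(h^2)$ by consistency.

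Combining (a), (b) and (c) in Theorem 12.9 of \cite{vaartghosal} gives \eqref{EqBvMTheorem} with $\Delta^n_{\theta_0,f_0}=\tilde i^{-1}_{\theta_0,f_0}\tilde\Delta_n$.
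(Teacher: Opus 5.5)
Your overall plan is the paper's: Theorem 12.9 of \cite{vaartghosal} with the shift $f\mapsto f+(\theta-\theta_0)\gamma_n$, and the log-likelihood ratio split into three parts. These are a stochastic part (Dudley's bound and \eqref{eq: finiteEntropy}), a quadratic part (consistency and item (i)), and a bias part (orthogonality \eqref{eq: lfd} and \eqref{eq: lfdrate}). But the central computation is set up in the wrong orientation, so the LAN condition is not verified in the form the hypotheses support.

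Matching the hypotheses \eqref{eq: contraction}--\eqref{eq: priorshift}, the condition to check concerns
\[\log\frac{dP^n_{\theta,f}}{dP^n_{\theta_0,f+(\theta-\theta_0)\gamma_n}}\quad\text{uniformly in }(\theta,f)\in\Theta_n\times H_n .\]
The free point $(\theta,f)$ ranges over the set of high posterior mass, and the shift is applied at the $\theta_0$-end. This is why the $\theta_0$-posterior is required to charge $H_n+(\theta-\theta_0)\gamma_n$.

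You instead expand $\log dP^n_{\theta,f+(\theta-\theta_0)\gamma_n}/dP^n_{\theta_0,f}$. Its linear term is $(\theta-\theta_0)(\dot K_{\theta_0}f+K_{\theta_0}\gamma_n)$, which is not the efficient score, and you then flip the sign of $\gamma_n$ by fiat. After the flip, write $\eta=f-(\theta-\theta_0)\gamma_n$. You have then expanded the correct ratio, but only uniformly over $\eta\in H_n-(\theta-\theta_0)\gamma_n$.
\begin{itemize}
\item The posterior mass of that shifted set is not controlled by the hypotheses.
\item \eqref{eq: lfdrate} gets invoked at the $\theta_0$-end point rather than at the free point.
\item The misorientation creates a spurious remainder $(\theta-\theta_0)(K_\theta-K_{\theta_0})\gamma_n$, which you can only bound through $\|\gamma_n\|_H$. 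The theorem gives no such control (only $\|\gamma_n-\gamma_{\theta_0,f_0}\|_{H_1}\le\rho_n$), and you leave this open.
\end{itemize}

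The repair is to expand the correct ratio directly. With $D=(K_\theta-K_{\theta_0})f-(\theta-\theta_0)K_{\theta_0}\gamma_n$, it equals
\[\sqrt n\ip{D}{\dot W}+n\ip{D}{K_{\theta_0}(f_0-f)}-n(\theta-\theta_0)\ip{D}{K_{\theta_0}\gamma_n}-\frac n2\|D\|^2 .\]
The pieces then behave as follows.
\begin{itemize}
\item $\gamma_n$ enters only through $K_{\theta_0}\gamma_n$, so no $H$-norm of $\gamma_n$ is ever needed.
\item The last two terms combine into $-\frac n2(\theta-\theta_0)^2\bigl(\|\dot K_{\theta_0}f\|^2-\|K_{\theta_0}\gamma_n\|^2\bigr)+O(n|\theta-\theta_0|^3)$. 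The coefficient tends to $\tilde i_{\theta_0,f_0}$ by \eqref{eq: info}.
\item The bias term becomes $n(\theta-\theta_0)\ip{K_{\theta_0}(f-f_0)}{K_{\theta_0}(\gamma_n-\gamma_{\theta_0,f})}$ plus a remainder of order $n(\theta-\theta_0)^2\eps_n$. It is evaluated at $f\in H_n$, exactly as \eqref{eq: lfdrate} requires.
\end{itemize}
Your stochastic-term argument then goes through unchanged, with one adjustment. Treat $\ip{K_{\theta_0}(\gamma_n-\gamma_{\theta_0,f_0})}{\dot W}$ separately as a single Gaussian variable of variance at most $D_1^2\rho_n^2$, since $K_{\theta_0}(\gamma_n-\gamma_{\theta_0,f_0})$ need not lie in a bounded $S$-ball.
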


The proof of Theorem \ref{thm: main} can be found in Section \ref{sec: proofs}. Theorem \ref{thm: main} is still in a ``raw" form, in the sense that we would like to translate conditions \eqref{eq: contraction}--\eqref{eq: priorshift} and the posterior contraction conditions into conditions on the operators and on the prior. This way we can have a systematic method of dealing with example applications. In the next section, we will investigate the posterior contraction conditions of Theorem \ref{thm: main} in the case of a (rescaled) Gaussian process prior $\pi_f$. Before that, we  disentangle conditions \eqref{eq: contraction}--\eqref{eq: priorshift} into conditions on the prior and on the rates of contraction in the following corollary.

\begin{cor}\label{cor: main}
	Let $(\theta_0, f_0)\in\Theta\times H$ be such that $\tilde i_{\theta_0,f_0}>0$. Let $\theta_0$ be an interior point of $\Theta$. Suppose Assumption \ref{asspt1} is satisfied. Assume that both the posterior and the posterior given $\theta_0$ contract at $(\theta_0,f_0)$ w.r.t the semi-metrics $|\theta-\theta_0|+\|K_{\theta_0}(f-f_0)\|_{H_2}$ and $\|\dot K_{\theta_0}(f-f_0)\|_{H_2}$ at rates $\eps_n\downarrow 0$ and $\xi_n\downarrow 0$ respectively (with $\eps_n \leq \xi_n$). Similarly assume that the posterior mass of $\{f:\|f\|_{H}<M\}$ goes to 1 for some $M>0$. Assume as well that there exists a sequence $\gamma_n$ approximating $\gamma_{\theta_0,f_0}$ at a rate $\rho_n\downarrow 0$ w.r.t $\|\cdot\|_{H_1}$.  Further assume that for $\eta_n\downarrow 0$, we have:
	\begin{align}
		\Pi(\|K_{\theta}f-K_{\theta_0}f_0\|_{H_2}\leq \eps_n)&\geq e^{-n\eps_n^2/64},\label{eq: KL}\\
		\pi_f\Bigl(\exists s, t\in (-\eps_n,\eps_n): \Bigl| \log\frac{d\pi_{f+(s+t)\gamma_n}} {d\pi_{f+t\g_n}}(f)\Bigr|
		> \eta_n (1+nt^2)\Bigr)&\le e^{-3n\eps_n^2}.\label{eq: priorLikelihoodRatio}
	\end{align}
	Finally assume that $n\eps_n^2\|K_{\theta_0}(\gamma_n-\gamma_{\theta_0,f_0})\|_{H_2}\to 0$, and that either \begin{itemize}
		\item $n\eps_n^2\xi_n^2\to 0$,
		\item or $\theta\mapsto\|K_\theta f\|_{H_2}$ is constant for every $f\in H_1$. 
	\end{itemize}
	Then the BvM holds at $(\theta_0,f_0)$.
\end{cor}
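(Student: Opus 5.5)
The plan is to deduce the corollary from Theorem \ref{thm: main}. The hypotheses on $\tilde i_{\theta_0,f_0}$, Assumption \ref{asspt1}, the $H$-ball mass, consistency and $\gamma_n$ are shared, so what remains is to construct $\Theta_n$ and $H_n$ satisfying \eqref{eq: contraction}--\eqref{eq: priorshift}.

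\textbf{Choice of sets.} Take $\Theta_n=\{\theta:|\theta-\theta_0|\le \eps_n\}$. We need $\sqrt n\,\eps_n\to\infty$, which is implied by \eqref{eq: KL} together with consistency, as is standard. Since $\theta_0$ is interior, $\sqrt n(\Theta_n-\theta_0)\uparrow\RR$. Next take $H_n$ to be the set of $f$ such that
\begin{itemize}
\item $\|f\|_H<M$,
\item $\|K_{\theta_0}(f-f_0)\|_{H_2}\le \eps_n$,
\item $\|\dot K_{\theta_0}(f-f_0)\|_{H_2}\le\xi_n$,
\item the event in \eqref{eq: priorLikelihoodRatio} fails, i.e.\ $|\log (d\pi_{f+(s+t)\gamma_n}/d\pi_{f+t\gamma_n})(f)|\le\eta_n(1+nt^2)$ for all $s,t\in(-\eps_n,\eps_n)$.
\end{itemize}

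\textbf{Posterior mass conditions \eqref{eq: contraction}.} The first three constraints have posterior mass tending to one by assumption, both under the full posterior and under the posterior given $\theta_0$. For the fourth, use the standard evidence lower bound: \eqref{eq: KL} gives denominator $\ge e^{-cn\eps_n^2}$ with $c<3$ with probability tending to one. Combined with the prior bound $e^{-3n\eps_n^2}$ in \eqref{eq: priorLikelihoodRatio}, Fubini shows the posterior of the bad set vanishes. The same argument is needed for the posterior given $\theta_0$, which requires a small-ball bound at $\theta_0$; this follows from \eqref{eq: KL} and Assumption \ref{asspt1}(ii), possibly after adjusting constants.

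For the shifted set $H_n+(\theta-\theta_0)\gamma_n$ with $|\theta-\theta_0|\le\eps_n$, I would relax the radii slightly. Since $\gamma_n\to\gamma_{\theta_0,f_0}$ in $H_1$, we have $\|K_{\theta_0}\gamma_n\|_{H_2}$ and $\|\dot K_{\theta_0}\gamma_n\|_{H_2}$ bounded by (i), so the shift moves these norms by $O(\eps_n)\le O(\xi_n)$. The prior-ratio constraint is stable under shifts by $t\gamma_n$; this is exactly why \eqref{eq: priorLikelihoodRatio} is stated with two parameters $s,t$. Condition \eqref{eq: priorshift} then follows immediately from the definition of $H_n$ by taking $t=0$ and $s=\theta-\theta_0$.

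\textbf{Condition \eqref{eq: lfdrate}.} I expect this to be the main obstacle. Write $\gamma_{\theta_0,f}-\gamma_n=(\gamma_{\theta_0,f}-\gamma_{\theta_0,f_0})+(\gamma_{\theta_0,f_0}-\gamma_n)$.

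The second piece contributes at most
\[\sqrt n\,\eps_n\,\|K_{\theta_0}(\gamma_n-\gamma_{\theta_0,f_0})\|_{H_2},\]
which tends to zero by the assumption $n\eps_n^2\|K_{\theta_0}(\gamma_n-\gamma_{\theta_0,f_0})\|_{H_2}\to0$, since $\sqrt n\eps_n\le n\eps_n^2$ eventually.

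For the first piece, use linearity of the LFD equation \eqref{eq: lfd} in $f$: $K_{\theta_0}\gamma_{\theta_0,f}$ is the projection of $\dot K_{\theta_0}f$ onto $\overline{K_{\theta_0}H}$. Hence
\[\langle K_{\theta_0}(f-f_0),K_{\theta_0}(\gamma_{\theta_0,f}-\gamma_{\theta_0,f_0})\rangle=\langle K_{\theta_0}(f-f_0),\dot K_{\theta_0}(f-f_0)\rangle,\]
which is bounded by $\eps_n\xi_n$. Under the first alternative, $\sqrt n\eps_n\xi_n\to0$ as required.

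Under the second alternative, $\theta\mapsto\|K_\theta g\|^2$ is constant for every $g$. Differentiating gives $\langle K_{\theta_0}g,\dot K_{\theta_0}g\rangle=0$, so taking $g=f-f_0$ the term vanishes identically. This is where the dichotomy in the corollary enters.
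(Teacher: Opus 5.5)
Your overall route is the paper's: the same $\Theta_n$, an $H_n$ built from contraction balls and a prior-ratio set, the remaining-mass principle, and the same handling of \eqref{eq: lfdrate}. The projection identity and both branches of the dichotomy are done correctly.

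\textbf{The gap: the shifted half of \eqref{eq: contraction} for your fourth constraint.} You put into $H_n$ the whole two-parameter good set
\[G=\Bigl\{f:\ \Bigl|\log\frac{d\pi_{f+(s+t)\gamma_n}}{d\pi_{f+t\gamma_n}}(f)\Bigr|\le\eta_n(1+nt^2)\ \ \forall s,t\in(-\eps_n,\eps_n)\Bigr\}.\]
Since $\frac{d\pi_{f+a\gamma_n}}{d\pi_{f+b\gamma_n}}(x-u\gamma_n)=\frac{d\pi_{f+(a+u)\gamma_n}}{d\pi_{f+(b+u)\gamma_n}}(x)$, membership of $x$ in $G+u\gamma_n$ requires control of the ratio at base points $b\in(u-\eps_n,u+\eps_n)$, with tolerance $\eta_n(1+n(b-u)^2)$.
\begin{itemize}
\item For $u\neq 0$, part of this range lies outside $(-\eps_n,\eps_n)$.
\item On the overlap, the required tolerance can be smaller than the $\eta_n(1+nb^2)$ that membership in $G$ gives.
\end{itemize}
So \eqref{eq: priorLikelihoodRatio} gives no bound on the prior mass of $(G+u\gamma_n)^c$, and your claim that the constraint is ``stable under shifts'' is unsupported. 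You also give no argument for the infimum over $\theta\in\Theta_n$.

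\textbf{The fix (the paper's choice).} Put only the base-point-zero set into $H_n$:
\[H_{n,2}=\Bigl\{f:\ \forall t\in(-\eps_n,\eps_n),\ \Bigl|\log\frac{d\pi_{f+t\gamma_n}}{d\pi_f}(f)\Bigr|\le\eta_n(1+nt^2)\Bigr\}.\]
Then $H_{n,2}+s\gamma_n$ is the same condition for $d\pi_{f+(s+t)\gamma_n}/d\pi_{f+s\gamma_n}$. Hence $\bigcup_{|s|<\eps_n}(H_{n,2}+s\gamma_n)^c$ is exactly the two-parameter event; this is the actual role of the second parameter.
\begin{itemize}
\item The paper's proof, and the verification in Theorem \ref{thm: main2}, use the event with $\pi_{f+s\gamma_n}$ in the denominator. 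The display in the statement has $s$ and $t$ interchanged there, and the repair needs the former reading.
\item A single remaining-mass argument under $\Pi^{\theta=\theta_0}$ then gives $\inf_{|s|<\eps_n}\Pi^{\theta=\theta_0}(H_{n,2}+s\gamma_n\mid X^{(n)})\to 1$, uniformly in the shift.
\item \eqref{eq: priorshift} follows directly from the definition of $H_{n,2}$.
\end{itemize}

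\textbf{Smaller issues.}

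First, do not put $\|f\|_H<M$ into $H_n$. The corollary gives the mass of this ball only under the full posterior, not under $\Pi^{\theta=\theta_0}$. Its shift by $(\theta-\theta_0)\gamma_n$ would also need $\eps_n\|\gamma_n\|_H$ to be small. That is not assumed, since $\gamma_n\to\gamma_{\theta_0,f_0}$ only in $H_1$. Theorem \ref{thm: main} already takes the $H$-ball as a separate hypothesis, so you can simply drop it.

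Second, the evidence lower bound under $\Pi^{\theta=\theta_0}$ does not follow from \eqref{eq: KL} and Assumption \ref{asspt1}(ii). \eqref{eq: KL} concerns the joint prior, and on its event neither $|\theta-\theta_0|$ nor $\|f\|_H$ is controlled. The paper also invokes the remaining-mass principle under $\Pi^{\theta=\theta_0}$ without this bound, so it is effectively an extra input. In Section 3 it comes from the small-ball estimate for $\pi_f$ in the proof of Theorem \ref{thm:contraction}.

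Third, ``relaxing the radii slightly'' must mean radii $C\eps_n$ and $C\xi_n$ with $C>D_1\sup_n\|\gamma_n\|_{H_1}$. Only then does every shifted set contain one fixed ball around $f_0$ whose $\Pi^{\theta=\theta_0}$-mass tends to one.
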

\begin{proof}
	Let $\Theta_n:=(\theta_0-\eps_n,\theta_0+\eps_n)$ and $H_n=H_{n,1}\cap H_{n,2}$ where,
	\[H_{n,1}:= \{f\in H_1: \|K_{\theta_0}(f-f_0)\|_{H_2}< C\eps_n, \|\dot K_{\theta_0}(f-f_0)\|_{H_2} < \xi_n\},\]
	\[H_{n,2}:=\left\{f: \forall t\in (-\eps_n,\eps_n),\ \left| \log{\dfrac{d\pi_{f+t\gamma_n}}{d\pi_f}(f)}\right|\leq \eta_n(1+nt^2)\right\},\]
	for some $C>0$ that will be determined later in the proof. We apply Theorem \ref{thm: main} to $\Theta_n\times H_n$. 
	
	Beginning with condition \eqref{eq: contraction}, we notice that the posterior mass of $\Theta_n\times H_{n,1}$ goes to 1 by the contraction assumption. For $H_{n,2}$, we apply the remaining mass principle (c.f. Theorem 8.20 in \cite{vaartghosal}). In our signal in white noise model \eqref{eq: obs}, it is the case that the Kullback-Leibler divergence $K(P^n_{\theta_0,f_0}, P^n_{\theta,f})$ and the variance $V_{2,0}(P^n_{\theta_0,f_0}, P^n_{\theta,f})$ are equal to 1/2 and 1 times $n\|K_{\theta}f-K_{\theta_0}f_0\|^2_{H_2}$ respectively. It follows from \eqref{eq: KL} that 
	\[\Pi((K\vee V_{2,0})(P^n_{\theta_0,f_0}, P^n_{\theta,f})\leq n\eps_n^2)\geq \Pi(\|K_{\theta}f-K_{\theta_0}f_0\|_{H_2}\leq \eps_n)\geq e^{-n\eps_n^2/64}.\]
	It then suffices to combine this with \eqref{eq: priorLikelihoodRatio} for the remaining mass principle of Theorem 8.20 in \cite{vaartghosal} to be satisfied.  The posterior probability of $H_{n,2}$ therefore tends to 1. 
	
	For the second part of \eqref{eq: contraction}, we observe that if $\theta\in\Theta_n$, then $|\theta-\theta_0|<2\eps_n$. Besides, since $\gamma_n$ converges to $\gamma_{\theta_0,f_0}$ in $H_1$, we have that $\|\gamma_n\|_{H_1}$ is bounded above by some constant $c>0$. By item (i) of Assumption \ref{asspt1}, it follows that $\|K_{\theta_0}((\theta-\theta_0)\gamma_n)\|_{H_2}\leq 2D_1c\eps_n$. Let then $C$ in the definition of $H_{n,1}$ be equal to $4D_1c$. We will now show that $H_{n,1}+(\theta-\theta_0)\gamma_n$ contains the following ball centered at $f_0$:
	\[B^n_{f_0}(C):=\left\{f\in H_1: \|K_{\theta_0}(f-f_0)\|_{H_2}< \frac{C}{2}\eps_n, \|\dot K_{\theta_0}(f-f_0)\|_{H_2} < \frac{C}{2}\xi_n\right\}.\]
	Let then $f\in B^n_{f_0}(C)$. Write $f= f-(\theta-\theta_0)\gamma_n +(\theta-\theta_0)\gamma_n$. Observe that $f-(\theta-\theta_0)\gamma_n$ satisfies for $n$ large enough,
	\begin{align*}
		\|K_{\theta_0}(f-(\theta-\theta_0)\gamma_n-f_0)\|_{H_2}&\leq 2D_1c\eps_n + \|K_{\theta_0}(f-f_0)\|_{H_2} < 4D_1c\eps_n = C\eps_n,\\
		\|\dot K_{\theta_0}(f-(\theta-\theta_0)\gamma_n-f_0)\|_{H_2}& < 2D_1c\eps_n + C\xi_n/2 \leq C\xi_n \text{ (since $\eps_n\leq\xi_n$)}.
	\end{align*}
	This implies that $f\in H_{n,1}$ and thus that $B^n_{f_0}(C)\subseteq H_{n,1}+(\theta-\theta_0)\gamma_n$. The posterior mass of $B^n_{f_0}(C)$ knowing $\theta_0$ goes to 1 by the contraction assumption given $\theta_0$, and hence so does the posterior mass of $H_{n,1}+(\theta-\theta_0)\gamma_n$. Now, for $H_{n,2}+(\theta-\theta_0)\gamma_n$, we first remark that since $d\pi_{f+t\gamma_n}/d\pi_f(f-s\gamma_n) = d\pi_{f+(s+t)\gamma_n}/d\pi_{f+s\gamma_n}(f)$ almost surely, we can write
	\[H_{n,2} + s\gamma_n = \left\{ f: \forall t\in(-\eps_n,\eps_n), \ \left| \log{\dfrac{d\pi_{f+(s+t)\gamma_n}}{d\pi_{f+s\gamma_n}}(f)}\right|\leq \eta_n(1+nt^2) \right\}.\]
	It follows that $\cup_{|s|<\eps_n} (H_{n,2}+s\gamma_n)^C$ is exactly the event in the left hand side of \eqref{eq: priorLikelihoodRatio}, which has probability bounded above by $e^{-3n\eps_n^2}$. We obtain that \begin{align*}
		&\inf_{\theta\in\Theta_n} \Pi^{\theta=\theta_0}(H_{n,2}+(\theta-\theta_0)\gamma_n\mid X^{(n)})\\&\geq \inf_{|s|<\eps_n} \Pi^{\theta=\theta_0}(H_{n,2}+s\gamma_n\mid X^{(n)})\\
		&\geq 1 - \Pi^{\theta=\theta_0}(\cup_{|s|<\eps_n}(H_{n,2}+s\gamma_n)^C\mid X^{(n)})\\
		&\to 1,
	\end{align*}
	again by the remaining mass principle. Condition \eqref{eq: contraction} is thus fully verified, and condition \eqref{eq: priorshift} then directly follows from the definition of $\Theta_n$ and $H_{n,2}$.
	
	Finally, condition \eqref{eq: lfdrate} can be split into two:
	\begin{align}
		\label{eq: lfdrateBis}\sqrt{n}\sup_{f\in H_n}\left| \ip{K_{\theta_0}(f-f_0)}{K_{\theta_0}(\gamma_n-\gamma_{\theta_0,f_0})} \right|&\to 0,\\
		\label{eq: lfdBias} \sqrt{n}\sup_{f\in H_n}\left| \ip{K_{\theta_0}(f-f_0)}{K_{\theta_0}(\gamma_{\theta_0,f}-\gamma_{\theta_0,f_0})} \right|&\to 0.
	\end{align}
	By definition of $H_{n,1}$, \eqref{eq: lfdrateBis} reduces to checking that $\sqrt{n}\eps_n\|K_{\theta_0}(\gamma_n-\gamma_{\theta_0,f_0})\|_{H_2}\to 0$ as in the statement of the corollary. Furthermore, by definition of $\gamma_{\theta_0,f}$ and $\gamma_{\theta_0,f_0}$ (see \eqref{eq: lfd}), we have that \eqref{eq: lfdBias} is equivalent to 
	\[\sqrt{n}\sup_{f\in H_n}\left| \ip{K_{\theta_0}(f-f_0)}{\dot K_{\theta_0}(f-f_0)} \right|\to 0,\]
	which can be obtained either by having $\sqrt{n}\eps_n\xi_n\to 0$, or by having that $\theta\mapsto \|K_{\theta}f\|$ is constant (because then $K_{\theta_0}H_1\perp \dot K_{\theta_0}H_1$). This concludes the proof.
\end{proof}

\section{Re-scaled Gaussian Process Priors}

\subsection{Posterior Contraction with re-scaled Gaussian Priors}

We begin by emphasizing that although Assumption  \ref{asspt1} was written with $\theta_0$ in its different items, it is  intended to be more general and should hold at every value of $\theta_0$, not just the ground truth. We also introduce a new set of assumptions.
\begin{asspt}\label{asspt:stab}
	Let $(\R,\|\cdot\|_{\R})\subseteq H$ be a normed linear space. For $M>0$, let $B_\R(M):=\{f\in\R: \|f\|_\R < M\}$. There exist $\delta>0$ small enough, constants $C_1, C_2>0$, and $\eta\in (0,1]$ such that 
	\begin{enumerate}[label=(\roman*),leftmargin=*, itemsep=0.5ex, before={\everymath{\displaystyle}}]
		\item $\sup_{(\theta,f)\in \Theta\times B_\R(M)}\{|\theta-\theta_0| + \|K_{\theta_0} (f - f_0)\|_{H_2}: \|K_\theta f-K_{\theta_0}f_0\|_{H_2}<\delta\}\leq C_1\delta,$
		\item $\sup_{f\in B_\R(M)}\{\|\dot K_{\theta_0}(f-f_0)\|_{H_2}: \|K_{\theta_0}(f-f_0)\|_{H_2}<\delta\}\leq C_2\delta^\eta.$
		\item Let $(T,\|\cdot\|_T)\supseteq H_1$ be a normed linear space. There exists a constant $C_3>0$ such that for all $f\in H_1$, \[\sup_{\theta\in\Theta} \|K_\theta f\|_{H_2}\leq C_3\|f\|_{T}.\]
	\end{enumerate}
\end{asspt}


Let us give some intuition as to how Assumption \ref{asspt:stab} relates to the conditions on the posterior in Corollary \ref{cor: main} . For sequences $\eps_n, 
\xi_n\downarrow 0$ and $M$ large enough, the BvM in Corollary \ref{cor: main} requires the following three conditions to simultaneously hold for the posterior:
\begin{enumerate}[label=(\alph*),leftmargin=*, itemsep=0.5ex, before={\everymath{\displaystyle}}]
	\item $\Pi((\theta,f)\in\Theta\times H: |\theta-\theta_0| +\|K_{\theta_0}(f-f_0)\|_{H_2}< \eps_n \mid X^{(n)})\to 1$,
	\item $\Pi((\theta,f)\in \Theta\times H: \|\dot K_{\theta_0}(f-f_0)\|_{H_2}<\xi_n\mid X^{(n)})\to 1$.
	\item $\Pi((\theta,f)\in\Theta\times H:\|f\|_{H}<M\mid X^{(n)})\to 1$,
\end{enumerate}
To obtain the contraction condition (a), we will use the standard approach in Bayesian non-linear inverse problems. That is, we first prove contraction at $(\theta_0,f_0)$ at a rate $\eps_n$ in the natural testing semi-metric of our model, \begin{align}\label{eq:testing_metric}
	 d((\theta_1,f_1),(\theta_2,f_2)) = \|K_{\theta_1}f_1 - K_{\theta_2}f_2\|_{H_2},
\end{align}
and then leverage item (ii) of Assumption \ref{asspt:stab} to obtain the desired contraction (a). We will refer to item (ii) as a \textit{(restricted) stability estimate}. In concordance with classical Bayesian non-linear inverse problems' literature (see \cite{Nickl23}), the ``stability estimate" appellation would typically be used to refer to a statement about the injectivity of the map $(\theta,f)\mapsto K_\theta f$ at $(\theta_0,f_0)$, therefore replacing $\|K_{\theta_0}(f-f_0)\|_{H_2}$ in (ii) of Assumption \ref{asspt:stab} by $\|f-f_0\|_{H_1}$ (see for example \cite{magra_VdM_VdV} where this is the case). In here, if we assume invertibility of $K_{\theta_0}$, then item (ii) can be seen as statement on the injectivity at $(\theta_0,K_{\theta_0} f_0)$ of the map $(\theta, K_{\theta_0}f)\mapsto K_\theta K_{\theta_0}^{-1}(K_{\theta_0}f)=K_\theta f$. This is of course a much weaker statement in general.

Concerning the posterior contraction (b) above at $\dot K_{\theta_0} f_0$, we simply build on (a) and use item (ii) of Assumption \ref{asspt:stab} to obtain contraction at a rate $\xi_n = \eps_n^\eta$. The $\eta$ on the right hand side of the inequality being in between 0 and 1 comes from the fact that we would typically expect slower rates in (b) than in (a).

We further note that for both (a) and (b) to be inferred from (i) and (ii) we also need to make sure that the posterior mass of $B_\R(M)$ converges to 1, at least for $M$ large enough. That is, we need $\Pi(B_\R(M)\mid X^{(n)})\to 1$. Observe that since $\R$ is a continuously embedded subspace of $H$, condition (c) will immediately follow from such a statement.  For a Gaussian prior $\pi_f$ on $f$ supported on $H$, this will generally not hold unless $\pi_f$ biases towards functions with small $\R$-norm as $n\uparrow\infty$. Using a re-scaled Gaussian prior supported on $\R$ will ensure that this is the case. We present the prior construction below.
\begin{asspt}[re-scaled Gaussian prior]\label{asspt:prior}
	Let $\pi'_f$ be a base Gaussian prior  supported on $(\R,\|\cdot\|_\R)$, i.e $\pi'_f$ is a Gaussian measure satisfying \[\pi'_f(f\in H: \|f\|_\R<\infty)=1.\] We denote by $\HHH$ its reproducing kernel Hilbert space (RKHS). For a scaling sequence $\tau_n\downarrow 0$, the prior $\pi_f$ on $f$ is the distribution of \[f = \tau_n f', \;\;\; f'\sim \pi_f'.\]  
\end{asspt}

Finally, observe that although the prior on $f$ is Gaussian and $K_\theta$ is linear for every $\theta$, the prior on $K_\theta f$ is non-Gaussian because $\theta$ is random. This is the reason why item (iii) of Assumption \ref{asspt:stab} will be crucial in proving contraction with respect to (w.r.t) the semi-metric $d$ in \eqref{eq:testing_metric}, as we can derive from it and from item (ii) of Assumption \ref{asspt1} the following \textit{Lipschitz condition} $\forall (\theta_1,f_1), (\theta_2,f_2)\in \Theta\times H,$
\begin{align}\label{eq:Lipschitz}
	 \|K_{\theta_1}f_1-K_{\theta_2}f_2\|_{H_2}\lesssim D_2|\theta_1-\theta_2|\|f_1\|_{H}\wedge\|f_2\|_{H} + C_3\|f_1-f_2\|_T.
\end{align}  

The above will allow us to leverage results on Gaussian priors in our context; for instance by linking the small ball probability of $\Pi$ with the ones  of $\pi_\theta$ and $\pi_f$.

\begin{thm}[Contraction in the natural testing metric]\label{thm:contraction}
	Assume that Assumptions \ref{asspt1} and item (iii) of \ref{asspt:stab} on the operators  are satisfied. Let $\eps_n\downarrow 0$ be a polynomial rate such that $n\eps_n^2\uparrow\infty$ and $\eps_n<< n^{-1/4}$. Assume that  the prior $\Pi=\pi_\theta\times\pi_f$ on $(\theta,f)$ is such that $\pi_\theta$ has a positive density w.r.t Lebesgue measure and such that $\pi_f$ satisfies the construction of Assumption \ref{asspt:prior} for $\tau_n = (\sqrt{n}\eps_n^{-1})$. Suppose that $f_0\in\HHH$ and that for $B_\HHH(1):=\{f:\|f\|_\HHH\leq 1\}$, we have:
	\begin{align}\label{eq:covering}
		\log{N(\eps_n, B_\HHH(1), \|\cdot\|_T)}\leq n\eps_n^2.
	\end{align}
	Then the following convergence holds in $P_{\theta_0,f_0}^n$-probability as $n\to\infty$ for some $m>0$: 
	\begin{align}\label{eq: natural_contr}
		\Pi((\theta,f)\in\Theta\times H_n: \|K_\theta f- K_{\theta_0}f_0\|_{H_2} > m\eps_n \mid X^{(n)}) \to 0,
	\end{align}
	where for $M$ large enough,
	\begin{align}\label{eq:sieves_contraction}
		H_n:=\{f = f_1+f_2 : \|f_1\|_T\leq M\eps_n, \ \|f_2\|_\HHH\leq M, \ \|f\|_\R< M\}.
	\end{align}
	The same is true for $\Pi^{\theta=\theta_0}(\cdot \mid X^{(n)}))$.
\end{thm}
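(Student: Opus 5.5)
The plan is to apply the general posterior contraction theorem for independent non-identically distributed / Gaussian white noise observations (Theorem 8.11 in \cite{vaartghosal}, in its form that only yields contraction on a sieve). That theorem needs two ingredients. The first is a prior mass condition $\Pi(\|K_\theta f-K_{\theta_0}f_0\|_{H_2}\le \eps_n)\ge e^{-cn\eps_n^2}$. The second is the existence of tests for $d$-balls around $(\theta_0,f_0)$ on $\Theta\times H_n$, which will come from an entropy bound $\log N(\eps_n,\{K_\theta f:\theta\in\Theta,f\in H_n\},\|\cdot\|_{H_2})\lesssim n\eps_n^2$. Since the conclusion \eqref{eq: natural_contr} is stated only on $\Theta\times H_n$, no remaining mass argument for $H_n^c$ is needed.

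In the signal in white noise model \eqref{eq: obs}, the Kullback--Leibler divergence and the variance are multiples of $n\|K_\theta f-K_{\theta_0}f_0\|_{H_2}^2$, and likelihood ratio tests between $K_{\theta_0}f_0$ and any $K_{\theta_1}f_1$ have errors $e^{-cn\|K_{\theta_1}f_1-K_{\theta_0}f_0\|^2}$, uniformly over an $H_2$-ball of radius a fraction of that distance. It therefore suffices to bound the $H_2$-covering number of $K_\Theta H_n$. Here I use the Lipschitz bound \eqref{eq:Lipschitz}. On $H_n$ we have $\|f\|_\R<M$, hence $\|f\|_H\lesssim M$, because $\R$ embeds continuously in $H$. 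So an $\eps_n/(D_2M)$-grid in $\Theta$ (cardinality polynomial in $n$, thus $\log\ll n\eps_n^2$) combined with an $\eps_n/C_3$-net of $H_n$ in $\|\cdot\|_T$ suffices. For the latter, every $f=f_1+f_2\in H_n$ is within $M\eps_n$ in $T$ of $f_2\in M B_\HHH(1)$. By \eqref{eq:covering}, applied after rescaling the radius by $M$ (adjusting constants or using monotonicity of the entropy in $\eps$), $\log N(\eps_n,MB_\HHH(1),\|\cdot\|_T)\lesssim n\eps_n^2$. This yields $\log N(m'\eps_n, K_\Theta H_n,\|\cdot\|_{H_2})\lesssim n\eps_n^2$.

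For the prior mass, \eqref{eq:Lipschitz} gives
\[\Pi(\|K_\theta f-K_{\theta_0}f_0\|_{H_2}\le\eps_n)\ge\pi_\theta(|\theta-\theta_0|\le \eps_n/(2D_2M))\,\pi_f(\|f-f_0\|_T\le\eps_n/(2C_3),\ \|f\|_H\le M).\]
The first factor is $\gtrsim\eps_n$, because $\pi_\theta$ has a positive density, so its logarithm is $O(\log n)\ll n\eps_n^2$. For the second factor, with $\tau_n=(\sqrt n\eps_n)^{-1}$ we write $\pi_f(\|f-f_0\|_T\le c\eps_n)=\pi'_f(\|f'-f_0/\tau_n\|_T\le c\eps_n/\tau_n)$. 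Since $f_0\in\HHH$, the Cameron--Martin/Anderson decentering bound costs at most $\tfrac12\|f_0\|_\HHH^2/\tau_n^2=\tfrac12\|f_0\|^2_\HHH n\eps_n^2$. The centred small ball exponent $\phi(c\sqrt n\eps_n^2)$ must then be bounded by a multiple of $n\eps_n^2$. I would obtain this from the entropy condition \eqref{eq:covering} through the Kuelbs--Li--Linde duality between metric entropy of $B_\HHH(1)$ in $T$ and small ball exponents. The condition $\eps_n\ll n^{-1/4}$ means the radius $\sqrt n\eps_n^2\to0$, so this is genuinely a small ball regime, and the duality is what ties the two scales together. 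The side constraint $\|f\|_H\le M$ costs nothing: it is implied by $\|f'\|_\R\le M\sqrt n\eps_n$, whose $\pi'_f$-probability tends to one since $\|f'\|_\R<\infty$ a.s. The two events are intersected by subtracting probabilities, after noting that the small ball probability decays only like $e^{-cn\eps_n^2}$ while the complement of the norm event has Gaussian tail $e^{-cM^2n\eps_n^2}$, which is smaller for $M$ large.

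The main obstacle is the small ball step: converting \eqref{eq:covering} into a lower bound $-\log\pi'_f(\|f'\|_T\le c\sqrt n\eps_n^2)\lesssim n\eps_n^2$ for the rescaled prior, with the radius and the entropy scale correctly matched. I would first try the Li--Linde inequality $\log N(\epsilon/\sqrt{2\phi(2\epsilon)},B_\HHH(1),\|\cdot\|_T)\gtrsim\phi(2\epsilon)$, combined with the concentration function bound from the decentering step, possibly losing constants in $m$. With prior mass, entropy and tests in place, Theorem 8.11 of \cite{vaartghosal} yields \eqref{eq: natural_contr}. The statement for $\Pi^{\theta=\theta_0}(\cdot\mid X^{(n)})$ follows identically with $\pi_\theta=\delta_{\theta_0}$, which only removes the $\pi_\theta$ factor and the $\Theta$-grid.
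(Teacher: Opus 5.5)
Your proposal is correct for the statement as written. For the prior-mass and entropy conditions it follows the paper's route:
\begin{itemize}
\item the Lipschitz bound \eqref{eq:Lipschitz};
\item decentering at $f_0\in\HHH$, at cost $\tfrac12 n\eps_n^2\|f_0\|_\HHH^2$;
\item Kuelbs--Li--Linde to convert \eqref{eq:covering} into $\phi_0(\sqrt n\eps_n^2)\lesssim n\eps_n^2$;
\item a grid on $\Theta$ combined with a $\|\cdot\|_T$-net of $MB_\HHH(1)$.
\end{itemize}
The paper carries out the step you flag as the main obstacle explicitly. With $\eps_n=n^{-\tau}$, \eqref{eq:covering} gives $\log N(\eps,B_\HHH(1),\|\cdot\|_T)\lesssim\eps^{-a}$ with $a=(1-2\tau)/\tau$. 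The condition $\eps_n\ll n^{-1/4}$ is precisely $a<2$, which is the range where the Li--Linde bound $\phi_0(\eps)\lesssim\eps^{-2a/(2-a)}$ holds. At $\eps=\sqrt n\eps_n^2$ this gives $n^{1-2\tau}=n\eps_n^2$, consistent with your scale-matching heuristic.

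Your extra constraint $\|f\|_H\le M$ in the prior-mass step is harmless but unnecessary. Since \eqref{eq:Lipschitz} involves $\|f_1\|_H\wedge\|f_2\|_H$, you can use the fixed $\|f_0\|_H$ there, as the paper does.

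The substantive difference is that you skip the remaining-mass condition. That is legitimate for the restricted conclusion \eqref{eq: natural_contr}. The paper, however, also proves $\Pi((\Theta\times H_n)^c)\le e^{-n\eps_n^2}$. It uses Borell--TIS for $\|f'\|_\R$ at level $M\sqrt n\eps_n$, and Borell's isoperimetric inequality together with the small-ball bound for the decomposition $f=f_1+f_2$. This is where ``$M$ large enough'' in \eqref{eq:sieves_contraction} comes from. It is also what yields $\Pi(H_n\mid X^{(n)})\to1$. The paper uses that right afterwards to obtain condition (c) and unrestricted contraction when applying Corollary \ref{cor:contraction} in Theorem \ref{thm: main2}. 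So your argument proves the literal statement, but not the stronger form the paper relies on downstream.
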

The proof of Theorem \ref{thm:contraction} can be found in Section \ref{sec: proofs}. From the fact that the contraction sieves $H_n$ in \eqref{eq:sieves_contraction} are contained in $B_\R(M)$, we can already deduce condition (c). Conditions (a) and (b) are obtained through the following corollary.

\begin{cor}[Decoupled Contraction and in the Derivative]\label{cor:contraction}
	Assume that items (i) and (ii) of Assumptions \ref{asspt:stab} hold and suppose that the conclusions of Theorem \ref{thm:contraction} are true for a polynomial rate $\eps_n$. Then for $H_n$ as in \eqref{eq:sieves_contraction}, the two following convergences hold in $P_{\theta_0,f_0}^n$-probability as $n\to\infty$:
	\begin{align}\label{eq: contraction_in_K}
		\Pi((\theta,f)\in\Theta\times H_n: |\theta-\theta_0| + \|K_{\theta_0}(f-f_0)\|_{H_2} > C_1m\eps_n \mid X^{(n)}) \to 0,
	\end{align} 
	\begin{align}\label{eq: contraction_in_dev}
		\Pi((\theta,f)\in\Theta\times H_n: \|\dot K_{\theta_0}(f-f_0)\|_{H_2} > C_2(C_1m)^\eta\xi_n \mid X^{(n)}) \to 0,
	\end{align} 
where $\xi_n = \eps_n^\eta$, with $\eta\in (0,1]$ coming from item (ii) of Assumption \ref{asspt:stab}. Statements \eqref{eq: contraction_in_K} and \eqref{eq: contraction_in_dev} also hold for the posterior given $\theta_0$.
\end{cor}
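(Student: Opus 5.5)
The plan is a deterministic set inclusion combined with the posterior contraction of Theorem \ref{thm:contraction}. Set $\delta_n := m\eps_n$, with $m$ the constant from \eqref{eq: natural_contr}. Since $\eps_n\downarrow 0$, for $n$ large we have $\delta_n<\delta$, where $\delta$ is the threshold in items (i) and (ii) of Assumption \ref{asspt:stab}. Recall that, by \eqref{eq:sieves_contraction}, every $f\in H_n$ satisfies $\|f\|_\R<M$, so $H_n\subseteq B_\R(M)$. Hence the stability estimates apply on the sieve, provided $M$ in the sieve is taken equal to (or at most) the $M$ for which Assumption \ref{asspt:stab} holds.

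First I prove \eqref{eq: contraction_in_K}. Take $(\theta,f)\in\Theta\times H_n$ with $\|K_\theta f-K_{\theta_0}f_0\|_{H_2}\le \delta_n$. To be safe at the boundary case, apply item (i) with $\delta=\delta_n'$ slightly larger than $\delta_n$ and let $\delta_n'\downarrow\delta_n$; alternatively, absorb the factor into the constant. Item (i) then gives $|\theta-\theta_0|+\|K_{\theta_0}(f-f_0)\|_{H_2}\le C_1 m\eps_n$. By contraposition,
\[\{(\theta,f)\in\Theta\times H_n:\ |\theta-\theta_0|+\|K_{\theta_0}(f-f_0)\|_{H_2}>C_1m\eps_n\}\subseteq\{(\theta,f)\in\Theta\times H_n:\ \|K_\theta f-K_{\theta_0}f_0\|_{H_2}>m\eps_n\}.\]
The posterior mass of the right-hand set tends to zero in probability by \eqref{eq: natural_contr}, and so the mass of the left-hand set does as well.

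Next I prove \eqref{eq: contraction_in_dev}. On the complement of the event just controlled, $f\in H_n\subseteq B_\R(M)$ and $\|K_{\theta_0}(f-f_0)\|_{H_2}\le C_1m\eps_n$. For $n$ large, $C_1 m\eps_n<\delta$, so item (ii) applied with $\delta=C_1m\eps_n$ (again up to the same boundary adjustment) yields
\[\|\dot K_{\theta_0}(f-f_0)\|_{H_2}\le C_2(C_1m\eps_n)^\eta=C_2(C_1m)^\eta\xi_n.\]
The event in \eqref{eq: contraction_in_dev} is therefore contained in the event of \eqref{eq: contraction_in_K}, and its posterior mass tends to zero.

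For the posterior given $\theta_0$, I repeat the same argument with $\theta\equiv\theta_0$, using the last sentence of Theorem \ref{thm:contraction}. In that case item (i) is only needed at $\theta=\theta_0$, which is a special case of what is assumed.

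The only delicate points are bookkeeping ones. The first is the strict versus non-strict inequality in the supremum conditions of Assumption \ref{asspt:stab}, which is handled by a marginally inflated $\delta$ or by replacing $m$ with $2m$. The second is making sure that the radius $M$ used in the sieve $H_n$ matches the $M$ for which the stability estimates are assumed. I expect no real obstacle beyond these.
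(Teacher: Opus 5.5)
Your proposal is correct and follows the paper's proof essentially verbatim. Like the paper, you use $H_n\subseteq B_\R(M)$ to derive two deterministic set inclusions from items (i) and (ii) of Assumption \ref{asspt:stab}, valid once $C_1m\eps_n\le\delta$, chain them with \eqref{eq: natural_contr}, and repeat the argument for the posterior given $\theta_0$. Your handling of the strict-versus-non-strict boundary (inflating $\delta$ slightly) is actually a bit more careful than the paper, whose inclusion into $\{\|K_\theta f-K_{\theta_0}f_0\|_{H_2}>m\eps_n\}$ tacitly ignores the equality case.
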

\begin{proof}
	We have that item (i) of Assumption \ref{asspt:stab} implies the following set inclusion whenever $m\eps_n\leq \delta$, 
	\begin{align*}
		&\{(\theta, f)\in \Theta\times H_n: |\theta-\theta_0| + \|K_{\theta_0}(f-f_0)\|_{H_2}>C_1m\eps_n\}\\
		&\subseteq \{(\theta,f)\in \Theta\times H_n: \|K_\theta f - K_{\theta_0}f_0\|_{H_2}> m\eps_n\}.
	\end{align*} 
	It is the case that \eqref{eq: contraction_in_K} then immediately follows from \eqref{eq: natural_contr}. Similarly, item (ii) of Assumption \ref{asspt:stab} implies that whenever $C_1m\eps_n\leq\delta$,
	\begin{align*}
		& \{(\theta,f)\in\Theta\times H_n: \|\dot K_{\theta_0}(f-f_0)\|_{H_2}>C_2(C_1m\eps_n)^\eta\}\\
		&\subseteq \{(\theta, f)\in \Theta\times H_n: |\theta-\theta_0| + \|K_{\theta_0}(f-f_0)\|_{H_2}>C_1m\eps_n\}.
	\end{align*}
	It follows that \eqref{eq: contraction_in_K} implies \eqref{eq: contraction_in_dev}.
	
	The same proof holds for $\Pi^{\theta=\theta_0}(\cdot\mid X^{(n)}))$.
\end{proof}

\subsection{A BvM with re-scaled Gaussian Priors}

With contraction results established in the previous subsection, we can now apply Corollary \ref{cor: main} to derive a BvM result when using a re-scaled Gaussian prior satisfying Assumption \ref{asspt:prior}. We first introduce for $\gamma\in H_1$, the decentering function associated to the base prior $\pi'_f$ as:
\[\psi_{\gamma}(\eps):=\inf_{h\in\HHH:\|\gamma-h\|_{H_1}\leq \eps} \|h\|_\HHH^2.\]
For a linear operator $A:H_1\to H_2$, we also consider the following proxy for quantifying the approximation of $A\gamma$ by elements of $\HHH$:
\[\delta^{A}{\gamma}(\eps):=\inf_{h\in\HHH: \|\gamma-h\|_{H_1}\leq \eps} \|A(\gamma-h)\|_{H_2}.\]
Observe that if $A$ is bounded, then we trivially have $\delta^A_\gamma(\eps)\lesssim \eps$. This is the case for $A=K_{\theta_0}$ for instance, in view of item (i) of Assumption \ref{asspt1}. Nevertheless in some situations, we will see that it is possible to leverage smoothing properties of $K_{\theta_0}$ to obtain smaller values for $\delta^{K_{\theta_0}}_{\gamma_{\theta_0, f_0}}(\eps)$, which explains our choice in stating the following BvM theorem.
\begin{thm}\label{thm: main2}
	Let $(\theta_0,f_0)\in\Theta\times H$ be such that $\tilde i_{\theta_0,f_0}$ with $\theta_0$ an interior point. Let $\Pi=\pi_\theta\times\pi_f$ be the prior on $(\theta,f)$ with $\pi_\theta$ admitting a positive density w.r.t the Lebesgue measure. Suppose that Assumptions \ref{asspt1}, \ref{asspt:stab} and \ref{asspt:prior} all hold, with re-scaling $\tau_n=(\sqrt{n}\eps_n)^{-1}$ in Assumption \ref{asspt:prior}. Let $\eps_n=n^{-\tau}$ with $\tau\in (1/4,1/2)$ be such that \eqref{eq:covering} is satisfied. Let also $\xi_n=\eps_n^\eta$ for $\eta$ coming from item (ii) of Assumption \ref{asspt:stab} and consider a sequence $\rho_n\downarrow 0$. Suppose that one of the following four scenarios is true:
	\begin{enumerate}[label=(\roman*),leftmargin=*, itemsep=0.5ex, before={\everymath{\displaystyle}}]
		\item $n\eps_n^2\xi_n^2 + n\eps_n^2\delta^{K_{\theta_0}}_{\gamma_{\theta_0,f_0}}(\rho_n) + n\eps_n^4\psi_{\gamma_{\theta_0,f_0}}(\rho_n)\to 0$.
		\item $n\eps_n^2\xi_n^2\to 0$ and $\gamma_{\theta_0,f_0}\in\HHH$.
		\item $\forall f\in H_1, \ \theta\mapsto \|K_\theta f\|_{H_2}$ is constant and $\ n\eps_n^2\delta^{K_{\theta_0}}_{\gamma_{\theta_0,f_0}}(\rho_n) + n\eps_n^4\psi_{\gamma_{\theta_0,f_0}}(\rho_n)\to 0$.
		\item $\forall f\in H_1, \ \theta\mapsto \|K_\theta f\|_{H_2}$ is constant and  $\gamma_{\theta_0,f_0}\in\HHH$.
	\end{enumerate}
	Then the BvM holds at $(\theta_0,f_0)$.
\end{thm}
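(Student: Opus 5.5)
The plan is to reduce Theorem \ref{thm: main2} to Corollary \ref{cor: main}. Contraction is supplied by Theorem \ref{thm:contraction} and Corollary \ref{cor:contraction}, and what remains is to build the approximating sequence $\gamma_n$ and verify the prior conditions \eqref{eq: KL} and \eqref{eq: priorLikelihoodRatio}.

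\textbf{Contraction.} Since $\tau\in(1/4,1/2)$, the rate $\eps_n=n^{-\tau}$ satisfies $n\eps_n^2\uparrow\infty$ and $\eps_n\ll n^{-1/4}$. Together with \eqref{eq:covering}, Theorem \ref{thm:contraction} applies. To remove the intersection with $H_n$ in \eqref{eq: natural_contr}, I first show that $\Pi(\Theta\times H_n\mid X^{(n)})\to1$ by the remaining mass principle. This needs the prior small-ball bound \eqref{eq: KL}, and I expect it to come out of the same argument that proves Theorem \ref{thm:contraction}: use the Lipschitz bound \eqref{eq:Lipschitz}, the positive density of $\pi_\theta$ (a $\theta$-interval of length $\asymp\eps_n$ costs only $e^{-c\log n}$), and the Gaussian small-ball bound for $\tau_n f'$ around $f_0\in\HHH$, which costs $\tau_n^{-2}\|f_0\|_\HHH^2\asymp n\eps_n^2$. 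The constant $1/64$ is obtained by adjusting the constant in $\eps_n$. Corollary \ref{cor:contraction} then gives contraction at rates $\eps_n$ and $\xi_n=\eps_n^\eta$ for both the posterior and the posterior given $\theta_0$, together with posterior mass tending to one on $B_\R(M)\subset\{\|f\|_H\lesssim M\}$.

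\textbf{Choice of $\gamma_n$.} Take $\gamma_n\in\HHH$ to be a (near) minimiser in the definition of $\psi_{\gamma_{\theta_0,f_0}}(\rho_n)$. Then $\|\gamma_n-\gamma_{\theta_0,f_0}\|_{H_1}\le\rho_n$ and $\|\gamma_n\|_\HHH^2\le2\psi(\rho_n)$, and we may also arrange $\|K_{\theta_0}(\gamma_n-\gamma_{\theta_0,f_0})\|_{H_2}\lesssim\delta^{K_{\theta_0}}_{\gamma_{\theta_0,f_0}}(\rho_n)$. (The two infima are over the same set; I would take a point nearly attaining a suitable combination, or simply take $\rho_n$ slightly larger.) In scenarios (ii) and (iv) I take $\gamma_n=\gamma_{\theta_0,f_0}\in\HHH$, so both $\delta$ and $\psi$ are trivially controlled. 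The condition $n\eps_n^2\|K_{\theta_0}(\gamma_n-\gamma_{\theta_0,f_0})\|\to0$ of Corollary \ref{cor: main} then follows from the $\delta$-term. (The corollary's proof in fact only uses $\sqrt n\eps_n\|\cdot\|\to0$, which is weaker.) The alternative between $n\eps_n^2\xi_n^2\to0$ and constancy of $\|K_\theta f\|$ is exactly the split between scenarios (i)/(ii) and (iii)/(iv).

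\textbf{Prior shift condition (the main obstacle).} To verify \eqref{eq: priorLikelihoodRatio}, use the Cameron--Martin formula for the rescaled prior, whose RKHS is $\tau_n\HHH$ with norm $\tau_n^{-1}\|\cdot\|_\HHH$. For $h=s\gamma_n$ this gives
\[\log\frac{d\pi_{f+(s+t)\gamma_n}}{d\pi_{f+t\gamma_n}}(f)= s\,\tau_n^{-2}U_n(f) - s\,t\,\tau_n^{-2}\|\gamma_n\|_\HHH^2-\tfrac{1}{2}s^2\tau_n^{-2}\|\gamma_n\|_\HHH^2 ,\]
up to sign conventions. Here $U_n(f)$ is the Paley--Wiener extension of $\ip{\gamma_n}{\cdot}_\HHH$, so $\tau_n^{-1}U_n(f)\sim N(0,\|\gamma_n\|_\HHH^2)$ under $\pi_f$, and $\tau_n^{-2}=n\eps_n^2$. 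The deterministic terms with $|s|,|t|<\eps_n$ are of order $n\eps_n^4\psi(\rho_n)$, which tends to $0$ by hypothesis. For the random term, a Gaussian tail bound shows that $|\tau_n^{-1}U_n(f)|\le c\sqrt{n}\eps_n\|\gamma_n\|_\HHH$ outside an event of probability $e^{-3n\eps_n^2}$ for suitable $c$. On this event the random term is at most $\eps_n\cdot\tau_n^{-1}\cdot c\sqrt n\eps_n\|\gamma_n\|_\HHH\asymp n\eps_n^3\|\gamma_n\|_\HHH\lesssim\sqrt{n\eps_n^4\psi}\cdot\sqrt{n\eps_n^2}$.

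This is the delicate point: the bound must be compared with $\eta_n(1+nt^2)$, which is allowed to be of order $\eta_n n t^2$ only when $t$ is large. I would handle it by splitting on $|t|$. The $s$-dependence is linear, with $|s|<\eps_n$, so I expect to need $\sqrt{n}\eps_n^2\|\gamma_n\|_\HHH\cdot\sqrt{n}\eps_n\to0$ after careful bookkeeping, or else to refine the tail bound and exploit that $n\eps_n^4\psi\to0$ gives room. If the crude bound fails, my fallback is to use the freedom in \eqref{eq: priorshift} more finely, controlling the supremum over $s,t$ of a Gaussian linear in $s$ by the tail of a single Gaussian variable. Once this is settled, every hypothesis of Corollary \ref{cor: main} is met in each of the four scenarios, and the BvM follows.
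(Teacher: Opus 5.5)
Your reduction to Corollary \ref{cor: main}, the contraction step, the choice of $\gamma_n$ (with $\gamma_n=\gamma_{\theta_0,f_0}$ in scenarios (ii) and (iv)) and the matching of the four scenarios to the corollary's alternatives all agree with the paper. The gap is the step you flag as the main obstacle: \eqref{eq: priorLikelihoodRatio} is never verified, and in the form you set it up it cannot be verified from the hypotheses. You read it with $\pi_{f+t\gamma_n}$ in the denominator, so the shift is $s$ while the normalisation is $1+nt^2$. Your own computation then shows why this fails. At $t=0$ you need $\sup_{|s|<\eps_n}|s|\,\tau_n^{-2}|U_n(f)|=\sqrt n\eps_n^2\,|\tau_n^{-1}U_n(f)|$ to tend to zero outside a set of prior mass $e^{-3n\eps_n^2}$. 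Since the Gaussian tail at that level is sharp, this requires $n\eps_n^3\|\gamma_n\|_\HHH\to0$, i.e.\ $n^2\eps_n^6\psi_{\gamma_{\theta_0,f_0}}(\rho_n)\to0$. That exceeds the hypothesis $n\eps_n^4\psi_{\gamma_{\theta_0,f_0}}(\rho_n)\to0$ by the diverging factor $n\eps_n^2$; even when $\gamma_{\theta_0,f_0}\in\HHH$ it would force $\tau>1/3$. Neither splitting on $|t|$, nor sharpening the tail bound, nor your fallback removes this.

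The missing point is that the condition actually needed is the one used inside the proof of Corollary \ref{cor: main}. The displayed condition has $\pi_{f+t\gamma_n}$ where the proof uses $\pi_{f+s\gamma_n}$. The sets $H_{n,2}+s\gamma_n$, which encode \eqref{eq: priorshift}, have base point $f+s\gamma_n$ and shift $t$, which is the same variable as in $1+nt^2$. Let $\|\cdot\|_{\mathcal H}=\sqrt n\eps_n\|\cdot\|_\HHH$ be the RKHS norm of $\pi_f$ and $U$ standard normal. Cameron--Martin gives
\[
\Bigl|\log\frac{d\pi_{f+(s+t)\gamma_n}}{d\pi_{f+s\gamma_n}}(f)\Bigr|\le|t|\,\|\gamma_n\|_{\mathcal H}|U(f)|+\bigl(|st|+\tfrac12t^2\bigr)\|\gamma_n\|_{\mathcal H}^2 .
\]
Work on $\{|U|\le3\sqrt n\eps_n\}$, whose complement has mass at most $e^{-3n\eps_n^2}$ for large $n$. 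There the elementary inequality $\sqrt n|t|\le1+nt^2$ bounds the random part divided by $1+nt^2$ by $3\eps_n\|\gamma_n\|_{\mathcal H}=3\sqrt n\eps_n^2\|\gamma_n\|_\HHH$. Its square is of order $n\eps_n^4\psi_{\gamma_{\theta_0,f_0}}(\rho_n)$, which is exactly the hypothesis. The deterministic part contributes at most $\sqrt n\eps_n^3\|\gamma_n\|_\HHH^2+\eps_n^2\|\gamma_n\|_\HHH^2$. The first term equals $(\sqrt n\eps_n^2\|\gamma_n\|_\HHH)(\eps_n\|\gamma_n\|_\HHH)$, and the second is $o(n\eps_n^4\|\gamma_n\|_\HHH^2)$ because $n\eps_n^2\to\infty$.

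So the factor $\sqrt n\eps_n$ produced by the Gaussian tail is paid for by the $1/\sqrt n$ in $|t|/(1+nt^2)$, not by $|s|<\eps_n$. With this correction your outline closes. In scenarios (ii) and (iv) the requirement reduces to $n\eps_n^4\to0$, which holds since $\tau>1/4$.
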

\begin{proof}
	We apply Corollary \ref{cor: main}. All the assumptions of Theorem \ref{thm:contraction} are satisfied, so contraction in the correct semi-metrics is obtained by virtue of Corollary \ref{cor:contraction}. Since it contains $B_\R(M)$, the posterior mass of $\{f:\|f\|_{H_1}<M\}$ goes to 1 also by Corollary \ref{cor:contraction}. 
	
	Concerning condition \eqref{eq: KL}, we have already verified it in the proof of Theorem \ref{thm:contraction}. It therefore remains to check condition \eqref{eq: priorLikelihoodRatio}. By definition of $\psi_{\gamma_{\theta_0,f_0}}(\rho_n)$ and $\delta^{K_{\theta_0}}_{\gamma_{\theta_0,f_0}}(\rho_n)$, it is possible to select a sequence $(\gamma_n)\subset\HHH$ such that $\|\gamma_n-\gamma_{\theta_0,f_0}\|_{H_1}\lesssim \rho_n$ with both $\|\gamma_n\|_\HHH^2\asymp \psi_{\gamma_{\theta_0,f_0}}(\rho_n)$ and $\|K_{\theta_0}(\gamma_n-\gamma_{\theta_0,f_0})\|_{H_2}\asymp \delta^{K_{\theta_0}}_{\gamma_{\theta_0,f_0}}(\rho_n)$ being true. Furthermore, by definition of $\pi_f$, its RKHS $\mathcal{H}$ satisfies the following norm relationship: $\|\cdot\|_\mathcal{H} = \sqrt{n}\eps_n\|\cdot\|_\HHH$. Therefore $\gamma_n\in\HHH$ for all $n\in\mathbb{N}$. By Cameron-Martin's formula, we then have that
	\[\dfrac{d\pi_{f+t\gamma_n}}{d\pi_f}(f) = e^{t\|\gamma_n\|_\mathcal{H}U(f) - t^2\|\gamma_n\|_\mathcal{H}^2},\]
	with $U(f)$ some measurable transformation of $f$ with a standard normal distribution. Besides, by the tail bound on the standard normal distribution, we have that $\pi_f(f: |U(f)|>2\sqrt{n}\eps_n)\leq e^{-3n\eps_n^2}$. It follows that \eqref{eq: priorLikelihoodRatio} is verified provided that \[\dfrac{|\log{(d\pi_{f+(s+t)\gamma_n}/d\pi_{f+s\gamma_n})(f)}|}{1+nt^2}\to 0 \ \text{ on }\{f:|U(f)|\leq 2\sqrt{n}\eps_n\},\] for $|s|<\eps_n$. To see this, we apply the before last display twice to obtain that, for $|s|<\eps_n$, we have on the event $\{f:|U(f)|\leq 2\sqrt{n}\eps_n\}$: 
	\begin{align*}
		\dfrac{|\log{(d\pi_{f+(s+t)\gamma_n}/d\pi_{f+s\gamma_n})(f)}|}{1+nt^2} &\leq \dfrac{\|\gamma_n\|_\mathcal{H}|U(f)|}{\sqrt{n}} + \dfrac{2|s|\|\gamma_n\|_\mathcal{H}^2}{\sqrt{n}} + \dfrac{\|\gamma_n\|_\mathcal{H}^2}{n}\\
		&\leq 2\eps_n\|\gamma_n\|_\mathcal{H} + \dfrac{2\eps_n\|\gamma_n\|_\mathcal{H}^2}{\sqrt{n}} + \dfrac{\|\gamma_n\|_\mathcal{H}^2}{n}\\
		&=2\sqrt{n}\eps_n^2\|\gamma_n\|_\HHH + 2\sqrt{n}\eps_n^3\|\gamma_n\|^2_\HHH + \eps_n^2\|\gamma_n\|^2_\HHH,
	\end{align*}
	where we used that $\sqrt{n}t\leq 1+nt^2$. If $\gamma_{\theta_0,f_0}\notin\HHH$, then  the right hand side of the last display goes to 0 provided that $n\eps_n^4\psi_{\gamma_{\theta_0,f_0}}(\rho_n)+n\eps_n^6\psi_{\gamma_{\theta_0,f_0}}(\rho_n)^2 + \eps_n^2\psi_{\gamma_{\theta_0,f_0}}(\rho_n)\to 0$. Now, as the product of the two other terms, the middle term will go to 0 whenever the other two go to 0. Besides, the first term dominates the last one by virtue of $n\eps_n^2\uparrow\infty$. This thus yields items (i) and (iii) of the theorem. 
	
	In the case that $\gamma_{\theta_0,f_0}\in\HHH$, then one can trivially  select $\gamma_n=\gamma_{\theta_0,f_0}$, yielding $\rho_n=0$ for all $n\in\mathbb{N}$. It follows that $\delta^{K_{\theta_0}}_{\gamma_{\theta_0,f_0}}(\rho_n) = 0$ and $\psi_{\gamma_{\theta_0,f_0}}(\rho_n)=\|\gamma_{\theta_0,f_0}\|_\HHH^2$. Realising that $n\eps_n^4\to 0$ by assumption on $\eps_n$, we obtain that items (i) and (iii) respectively imply items (ii) and (iv) in this case.
\end{proof}

\section{Application to Semi-blind Deconvolution}

Consider a family of convolution kernels $\{g_\theta:\theta\in\Theta\}$ indexed by their location $\theta$, with $\Theta\subset (-1/2,1/2)$ compact. We let $g:=g_0$ be the convolution kernel with location 0. We assume that it is a symmetric, square integrable, differentiable and a 1-periodic function. The kernels are identical except for their location which uniquely identifies them, i.e. we have the relationship $g_\theta = g(\cdot - \theta)$. We also define $L^2[-1/2,1/2]$ as the square integrable functions over $[-1/2,1/2]$ extended by 1-periodicity. For $\theta\in\Theta$, we define the convolution operator $K_\theta: L^2[-1/2,1/2]\to L^2[-1/2,1/2]$ as:
\[
	K_{\theta}f(t):=g*f(t-\theta) =\int_{-1/2}^{1/2} f(t-u)g(u-\theta)du.
\]
Because $g$ is differentiable with derivative $g$, we also obtain the following expression for $\dot K_\theta f$,
\[
\dot K_\theta f = -\int_{-1/2}^{1/2} f(t-u)g'(u-\theta)\,du.
\]

Our objective is to recover the location $\theta$ from a single noisy observation of the convoluted signal $K_\theta f$ over the whole domain $[-1/2, 1/2]$. For $W$ a standard Brownian motion, we let $P^{(n)}_{\theta,f}$ be the distribution of $X^{(n)}$ where,
\begin{align}\label{eq: BM_obs}
	dX^{(n)}_t = K_\theta f(t)dt + \dfrac{1}{\sqrt{n}}dW_t.
\end{align}
For $H_1=H_2=L^2[-1/2,1/2]$ and $\dot{W} = dW_t$, this is precisely the observational model \eqref{eq: obs}. This model is a priori not identifiable over all of $L^2[-1,2/2]$ since we have the relationship $K_\theta f = K_\tau h$ for $h(t)= f(t - (\theta-\tau))$. That is, any shifted version of $f$ will yield an equivalent observation with a different location. It is therefore crucial to consider a restricted parameter space for $f$ to pursue any meaningful inference. We thus consider the following two distinct sets of assumptions on the parameter space $H \subset L^2[-1/2,1/2]$.

\begin{model}\label{model: symmetric}
	The parameter space for $f$ is the set of symmetric functions in $L^2[-1/2,1/2]$. Namely,
	\[H =\left\{f\in\L^2[-1/2,1/2]:\ \forall t,\ f(t)=f(-t) \right\}.\]  
\end{model}

\begin{model}\label{model: zeroloc}
	The parameter space for $f$ is the set of functions in $L^2[-1/2,1/2]$ with location zero. Namely,
	\[H = \left\{f\in L^2[-1/2,1/2]: \int_{-1/2}^{1/2}tf(t)dt = 0\right\}.\]
\end{model}

Observe that Model \ref{model: symmetric} is a strict sub-model of Model \ref{model: zeroloc} but exhibits interesting properties which makes it worthwhile to consider on its own. Another way of writing $H$ in Model \ref{model: zeroloc} is as the kernel of the map $f\mapsto \ip{f}{S}$ for $S$ the \textit{saw-tooth} function, which we define as the 1-periodic extension of the identity on $[-1/2,1/2]$. Namely, $S(t) := t$ for $t\in[-1/2,1/2)$ and $S(t+1)=S(t)$ for all $t\in\RR$. The following lemma characterises the information and least favourable directions in both models. 

\begin{lemma}\label{lemma: info_deconv}
	The least favourable direction $\gamma^{(1)}_{\theta, f}$ and corresponding efficient information $\tilde i^{(1)}_{\theta, f}$ in Model \ref{model: symmetric} are as follows:
	\begin{align*}
		\gamma^{(1)}_{\theta, f} = 0, \;\;\;\;\;\;\;\;\;\;\;\;\;\;\;\; \tilde i^{(1)}_{\theta, f} = \|\dot K_\theta f\|_{L^2}^2 = \|g'*f\|^2_{L^2}.
	\end{align*}
	Assume that the mass of $g$ is equal to 1, i.e. $\int_{-1/2}^{1/2}g(t)dt = 1$, and  suppose that $f\in H$ admits a derivative $f'$. Let $A$ be any linear operator such that its adjoint $A^*$ preserves location, i.e. $\ip{A^*h}{S} = \ip{h}{S}$ for all $h\in L^2[-1/2,1/2]$, with $S$ the saw-tooth function. Let also $\lambda = 12\int_{-1/2}^{1/2}f'(t)tdt$. Then $\gamma^{(2), A}_{\theta, f}$ below is a least favourable direction in Model \ref{model: zeroloc}.
	\begin{align*}
		 \gamma^{(2), A}_{\theta, f} = -f' + \lambda AS.
	\end{align*}
	The efficient information in Model \ref{model: zeroloc} is then $\tilde{i}^{(2)}_{\theta, f} = \lambda^2/12$. In particular, if $f(1/2)\neq \int_{-1/2}^{1/2}f(t)dt$ then $i^{(2)}_{\theta,f}>0$.
\end{lemma}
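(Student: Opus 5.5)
The plan is to verify the defining relation \eqref{eq: lfd} directly in each model, that is, $-\gamma\in H$ and $\ip{\dot K_\theta f-K_\theta\gamma}{K_\theta h}_{L^2}=0$ for all $h\in H$, and then read off the information from \eqref{eq: info}. Two facts about the operators are used throughout. First, translation by $\theta$ is an isometry of $L^2[-1/2,1/2]$ (periodic setting), so $K_\theta f=\tau_\theta(g*f)$ and $\dot K_\theta f=-\tau_\theta(g'*f)$ with $\tau_\theta$ unitary. Second, integration by parts in the convolution gives $g'*f=g*f'$ when $f$ is differentiable with periodic extension. The second fact is where the hypothesis on $f'$ enters.

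\textbf{Model \ref{model: symmetric}.} Since $g$ is even, $g'$ is odd. For even $f$ this makes $g'*f$ odd, while for even $h$ the function $g*h$ is even. Because $\tau_\theta$ is unitary,
\[
\ip{\dot K_\theta f}{K_\theta h}=-\ip{g'*f}{g*h}=0,
\]
being the integral of an odd function over the symmetric period. Hence $\gamma^{(1)}_{\theta,f}=0$ satisfies \eqref{eq: lfd}. The information formula then follows from \eqref{eq: info} together with $\|\tau_\theta(g'*f)\|=\|g'*f\|$.

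\textbf{Model \ref{model: zeroloc}.} I first check that $-\gamma^{(2),A}_{\theta,f}\in H$, i.e.\ that $\ip{\gamma}{S}=0$. Integration by parts gives $\ip{f'}{S}=\int f'(t)t\,dt=\lambda/12$. The location-preserving property gives $\ip{AS}{S}=\ip{S}{A^*S}=\ip{S}{S}=\|S\|^2=1/12$. Hence
\[
\ip{-f'+\lambda AS}{S}=-\lambda/12+\lambda/12=0.
\]
Next, using $\dot K_\theta f=-K_\theta f'$, the residual simplifies to
\[
\dot K_\theta f-K_\theta\gamma=-\lambda K_\theta AS .
\]
So orthogonality reduces to $\ip{K_\theta AS}{K_\theta h}=\ip{K_\theta^*K_\theta AS}{h}=0$ for all $h\in H=\{S\}^\perp$. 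Equivalently, $K_\theta^*K_\theta AS$ must be a multiple of $S$. The information is then
\[
\lambda^2\|K_\theta AS\|^2=\lambda^2\ip{K_\theta^*K_\theta AS}{AS}.
\]
Writing $K_\theta^*K_\theta AS=cS$, this equals $\lambda^2 c\ip{S}{AS}=\lambda^2c/12$. So the target value $\lambda^2/12$ corresponds to $c=1$, i.e.\ $K_\theta^*K_\theta AS=S$.

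\textbf{Main obstacle.} The step I expect to be hardest is justifying $K_\theta^*K_\theta AS=S$ from the stated hypothesis on $A$. Note that $K_\theta^*K_\theta$ is convolution with the symmetric mass-one kernel $\tilde g*g$, independently of $\theta$. My first attempt is to interpret $A$ as a (generalised) inverse of $K_\theta^*K_\theta$ on the relevant range, and to show that the location-preservation condition $\ip{A^*h}{S}=\ip{h}{S}$ is exactly what forces the residual to lie in $\mathrm{span}(S)=H^\perp$. Here I would use that convolution with a mass-one kernel preserves $\ip{\cdot}{S}$ up to boundary (jump) terms, and I would handle those jump terms via the periodic extension.

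Finally, positivity. Integration by parts gives $\lambda=12\bigl(\tfrac{f(1/2)+f(-1/2)}{2}-\int f\bigr)$. Under periodicity this equals $12\bigl(f(1/2)-\int f\bigr)$, so $\lambda\neq0$, and hence $\tilde i^{(2)}_{\theta,f}>0$, exactly when $f(1/2)\neq\int_{-1/2}^{1/2}f(t)\,dt$.
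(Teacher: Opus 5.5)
Your Model~\ref{model: symmetric} argument is correct and is the paper's parity argument. You use the oddness of $g'$ rather than of $f'$, which even removes the need for $f$ to be differentiable there. Your reduction in Model~\ref{model: zeroloc} is also correct: the residual is $-\lambda K_\theta AS$, so both the least favourable equation and the value $\lambda^2/12$ hinge on $K_\theta^*K_\theta AS=S$. That identity is exactly what you leave unproved, so the Model~\ref{model: zeroloc} part has a genuine gap.

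Your proposed workaround does not fit the hypothesis. Since $\ip{A^*h}{S}=\ip{h}{AS}$, ``$A^*$ preserves location'' just says $AS=S$. So $A$ plays no role as an inverse, and the identity you need is $g*g*S=S$. The paper fills this step with Claim~\ref{claim: location}: $K_\theta^*=K_{-\theta}$, and convolution with a symmetric mass-one $g$ preserves location. Hence $\ip{AS}{g*g*h}=\ip{S}{A^*(g*g*h)}=\ip{h}{S}=0$ for $h\in H$, and likewise $\|g*AS\|^2=\ip{S}{S}=1/12$.

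Your suspicion about jump terms is justified, and that claim fails in the paper's periodic setting. Its proof replaces $S(u+s)$ by $u+s$ for all $u,s\in[-1/2,1/2]$, ignoring the wrap-around of the sawtooth. Done correctly one gets $\int g(u)S(u+s)\,du=s-\mathrm{sgn}(s)\int_{1/2-|s|}^{1/2}g(u)\,du$.

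In Fourier terms, location preservation means $g*S=S$. Every coefficient of $S$ is nonzero ($|S_k|=1/(2\pi|k|)$), so this requires $g_k=1$ for all $k\neq0$, which is impossible for $g\in L^2$. Concretely, $g=1+\cos(2\pi\cdot)$ and $h=\sin(4\pi\cdot)$ give $\ip{g*h}{S}=0\neq-1/(4\pi)=\ip{h}{S}$. The same computation shows that $g*g*S\in\mathrm{span}(S)$ already forces $g_k=0$ for all $k\neq0$.

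In fact, under the decay $|g_k|\lesssim|k|^{-\kappa}$ used in the paper, $\sum_k|S_k|^2/|g_k|^2=\infty$, so $S\notin\mathrm{Range}(K_\theta^*)$. Then $v\perp K_\theta H$, i.e.\ $K_\theta^*v\in\mathrm{span}(S)$, forces $K_\theta^*v=0$. Hence $\overline{K_\theta H}=\overline{\mathrm{Range}(K_\theta)}$, which contains $\dot K_\theta f$, and the efficient information $\inf_{\gamma\in H}\|\dot K_\theta f-K_\theta\gamma\|^2$ in Model~\ref{model: zeroloc} is $0$.

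So the missing step is not merely unproven: no argument can supply it. The Model~\ref{model: zeroloc} conclusion, that $\gamma^{(2),A}_{\theta,f}$ is least favourable with $\tilde i^{(2)}_{\theta,f}=\lambda^2/12$, does not hold for the periodic convolution as defined.
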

The proof of Lemma \ref{lemma: info_deconv} can be found in Section \ref{sec: proofs}. We remark that in Model \ref{model: symmetric}, we do not lose any information from not knowing $f$, while this is not the case in Model \ref{model: zeroloc}. We also have infinitely many least favourable directions in Model \ref{model: zeroloc}. This means in particular that we can select ``smoother" least favourable directions by leveraging the choice of $A$. This will be useful when we will consider the approximation of $\gamma^{(2),A}_{\theta,f}$ by RKHS elements of a Gaussian prior. Note also that (repeated) convolution with $g$ is an example of a location preserving operator as described in the lemma (this fact will be later proven in Claim \ref{claim: location}).

We are now all set to begin the verification of the necessary conditions to apply the result of Theorem \ref{thm: main2}. We will begin by verifying the operators related Assumptions \ref{asspt1} and \ref{asspt:stab} in the next subsection. We will then specify a Gaussian prior that satisfies Assumption \ref{asspt:prior} and inequality \eqref{eq:covering} before finally deriving BvM results for Models \ref{model: symmetric} and \ref{model: zeroloc}.

In what follows, we will often make use of an alternative representation of $K_\theta$ and $\dot K_\theta$ in the complex exponential basis. Let $\{e_k(\cdot)\}_{k\in\mathbb{Z}}:=\{e^{-2\pi ik\cdot}\}_{k\in\mathbb{Z}}$. For $g_k = \ip{g}{e_k}$ and $f_k=\ip{f}{e_k}$, we have,
\begin{align*}
	K_\theta f &= \sum_{k\in\mathbb{Z}}f_k g_k e^{2\pi i k\theta}e_k,\\
	\dot K_\theta f &= -2\pi i\sum_{k\in\mathbb{Z}}kf_k g_k e^{2\pi ik\theta}e_k.
\end{align*}

\subsection{Regularity and Stability Assumptions}

For $r\geq 0$, consider the Sobolev type spaces $S^r$ and their corresponding square norm:
\begin{align}\label{eq: sobolev}
	S^r:= \left\{ f\in L^2[-1/2,1/2]: \sum_{k\in\mathbb{Z}} |f_k|^2(1+|k|)^{2r} <\infty \right\}, \ \|f\|_{S^r}^2:=\sum_{k\in\mathbb{Z}} |f_k|^2(1+|k|^{2r}).
\end{align}
Note that $\{S^r\}_{r\geq 0}$ are Hilbert spaces and that in particular, $S^0=L^2[-1/2,1/2]$. We also define $S^{-r}$ as the topological dual of $S^r$. The square (dual) norm of $f$ in this space is then equal to \[\|f\|_{S^{-r}}^2 = \sum_{k\in\mathbb{Z}} |f_k|^2(1+|k|)^{-2r},\] further making explicit that $S^{-r}\supset L^2[-1/2,1/2]$ for $r>0$. 

\begin{prop}\label{prop: regularity}
	Let $H_1=H_2=L^2[-1/2,1/2]$ and let $H$ be as in Model \ref{model: symmetric} or Model \ref{model: zeroloc}. Let also $\Theta\subset (-1/2, 1/2)$ be compact. Assume that $\sup_k |k|^{5/2+\delta}|g_k| <\infty$ for some $\delta>0$. Then the convolution operator $K_\theta$ satisfies Assumption \ref{asspt1} with $S=S^r$ for $r\leq 1/2+\delta$. Besides, \eqref{eq: finiteEntropy} is verified for $r>1/2$ and the map $\theta\mapsto \|K_\theta f\|_{L^2}$ is constant for every $f\in H_1$. 
\end{prop}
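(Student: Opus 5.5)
Everything reduces to Fourier multipliers, using the representation of $K_\theta$ and $\dot K_\theta$ in the basis $\{e_k\}$ stated just before the proposition. Write $m_k(\theta):=g_k e^{2\pi i k\theta}$ and $\dot m_k(\theta) = -2\pi i k g_k e^{2\pi i k\theta}$. Since $|e^{2\pi i k\theta}|=1$, we get $\|K_\theta f\|_{L^2}^2=\sum_k |f_k|^2|g_k|^2$, which does not depend on $\theta$. This gives the final claim at once, for every $f\in H_1$. For item (i) of Assumption \ref{asspt1}, Parseval gives $\|K_\theta f\|_{L^2}\le \sup_k|g_k|\,\|f\|_{L^2}$ and $\|\dot K_\theta f\|_{L^2}\le 2\pi\sup_k|k||g_k|\,\|f\|_{L^2}$. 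Both suprema are finite because $|g_k|\lesssim |k|^{-5/2-\delta}$.

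For items (ii)--(iv), take $H$ with the $L^2$ norm, which is the natural reading since $H\subset L^2$ is a closed subspace in both models. Every estimate then becomes a pointwise bound on multipliers weighted by $(1+|k|)^{r}$.
\begin{itemize}
\item For (ii), use $|e^{2\pi i k\theta}-e^{2\pi i k\theta_0}|\le 2\pi|k||\theta-\theta_0|$. This gives $\|(K_\theta-K_{\theta_0})f\|_{S^r}^2\lesssim |\theta-\theta_0|^2\sum_k |f_k|^2 (1+|k|)^{2r}|k|^2|g_k|^2$, so it suffices that $\sup_k (1+|k|)^{r+1}|g_k|<\infty$.
\item For (iii), the second-order Taylor bound $|e^{ix}-1-ix|\le x^2/2$ with $x=2\pi k(\theta-\theta_0)$ gives a factor $|k|^2|\theta-\theta_0|^2$. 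This requires $\sup_k (1+|k|)^{r+2}|g_k|<\infty$.
\item For (iv), apply the same bound to $\dot m_k$, which needs $\sup_k(1+|k|)^{r+1}|g_k|<\infty$.
\end{itemize}
The binding constraint is (iii): it needs $r+2\le 5/2+\delta$, that is $r\le 1/2+\delta$, which is exactly the range in the statement. Each constant $D_i$ is then a multiple of $\sup_k |k|^{5/2+\delta}|g_k|$, uniformly over the compact $\Theta$.

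For the entropy condition \eqref{eq: finiteEntropy}, I would use the standard bound for Sobolev-type ellipsoids: the unit ball of $S^r$ in $L^2$ satisfies $\log N(\eps)\lesssim \eps^{-1/r}$. To see this, truncate to frequencies $|k|\le N\asymp\eps^{-1/r}$; the tail contributes at most $N^{-r}\le\eps$. Then cover the remaining $(2N+1)$-dimensional ball of radius $1$ at scale $\eps$, which takes about $N\log(1/\eps)$ in log-covering number. That gives $\log N(\eps)\lesssim \eps^{-1/r}\log(1/\eps)$, and $\int_0^1 \eps^{-1/(2r)}\sqrt{\log(1/\eps)}\,d\eps<\infty$ exactly when $r>1/2$. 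A sharper citation (for instance the entropy of Sobolev balls in \cite{vaartghosal}) removes the log factor, but it is not needed.

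None of these steps is deep. The only place needing care is the bookkeeping in (iii), where the derivative order and the target smoothness $r$ add up against the decay exponent $5/2+\delta$; this is where the restriction $r\le 1/2+\delta$ comes from.
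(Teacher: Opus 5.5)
Your proof is correct and is the Fourier-multiplier computation that the paper leaves to the reference \cite{magra_VdV_HvZ}. Item (iii), via $|e^{ix}-1-ix|\le x^2/2$, is indeed the binding constraint $r+2\le 5/2+\delta$. Your truncation bound $\log N(\eps)\lesssim \eps^{-1/r}\log(1/\eps)$ gives \eqref{eq: finiteEntropy} exactly for $r>1/2$.

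One cosmetic slip: the $\theta$-derivative of $m_k(\theta)=g_k e^{2\pi i k\theta}$ is $+2\pi i k g_k e^{2\pi i k\theta}$. The minus sign you copied from the paper's display is inconsistent with its formula for $K_\theta f$. Your Taylor step in (iii) actually uses the plus sign; with the minus sign, (iii) would fail.
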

\begin{proof}
	The proof of Proposition \ref{prop: regularity} is similar to the proof of Proposition 3.2 in \cite{magra_VdV_HvZ}.
\end{proof}

The following proposition summarise the needed assumptions for the full verification of Assumption \ref{asspt:stab} in both Models \ref{model: symmetric} and \ref{model: zeroloc}.
 
\begin{prop}\label{prop: stability}
	Let $\Theta\subset (-1/2,1/2)$ be compact and let $H$ be as in Model \ref{model: symmetric} or \ref{model: zeroloc}. Let $\R = S^\beta\cap H$ for some $\beta\geq 0$. Assume that the coefficients of $g$ satisfy $|g_k|\lesssim |k|^{-\kappa}$ for $\kappa>1$. Let $\chi = \beta+\kappa$ and assume $f_0\in\R$.  Assume further that both $f_0$ and $g$ have mass 1 over their period. Then Assumption \ref{asspt:stab} is verified for $T=S^{-\kappa}$ and $\eta = 1-1/\chi$. 
\end{prop}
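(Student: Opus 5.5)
The plan is to verify the three items of Assumption \ref{asspt:stab} separately, working throughout in the Fourier representation $K_\theta f=\sum_k f_kg_ke^{2\pi ik\theta}e_k$ and $\dot K_\theta f=-2\pi i\sum_k kf_kg_ke^{2\pi ik\theta}e_k$. I would do item (iii) first, then item (ii), and leave item (i), which carries the real content, for last.

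\emph{Item (iii).} By Parseval, $\|K_\theta f\|_{L^2}^2=\sum_k|f_k|^2|g_k|^2$, independently of $\theta$. Since $g$ has mass one, $|g_0|=1$, and by assumption $|g_k|\lesssim|k|^{-\kappa}$ for $k\neq0$. Hence $|g_k|^2\lesssim(1+|k|)^{-2\kappa}$ for all $k$, which gives $\sup_\theta\|K_\theta f\|_{L^2}\le C_3\|f\|_{S^{-\kappa}}$ with $T=S^{-\kappa}$.

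\emph{Item (ii).} Put $h=f-f_0$, so $\|h\|_{S^\beta}\le M+\|f_0\|_{S^\beta}$, and assume $\|K_{\theta_0}h\|^2=\sum_k|g_kh_k|^2<\delta^2$. Split $\|\dot K_{\theta_0}h\|^2=4\pi^2\sum_kk^2|g_k|^2|h_k|^2$ at a frequency cutoff $N$.
\begin{itemize}
\item The low frequencies $|k|\le N$ contribute at most $4\pi^2N^2\delta^2$.
\item The high frequencies $|k|>N$ contribute at most $\sum_{|k|>N}k^{2-2\kappa}|h_k|^2\lesssim N^{2-2\kappa-2\beta}\|h\|_{S^\beta}^2\lesssim N^{2-2\chi}$.
\end{itemize}
Balancing the two with $N\asymp\delta^{-1/\chi}$ gives $\|\dot K_{\theta_0}h\|\lesssim\delta^{1-1/\chi}$, that is $\eta=1-1/\chi$, with $C_2$ depending on $M$ and $\|f_0\|_{S^\beta}$.

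\emph{Item (i).} The plan is to first get $|\theta-\theta_0|\lesssim\delta$ on $\{\|K_\theta f-K_{\theta_0}f_0\|<\delta\}\cap B_\R(M)$. Once that holds, the second part follows from the triangle inequality and Assumption \ref{asspt1}(ii) (Proposition \ref{prop: regularity}):
\[
\|K_{\theta_0}(f-f_0)\|\le\|K_\theta f-K_{\theta_0}f_0\|+\|(K_\theta-K_{\theta_0})f\|\le\delta+D_2|\theta-\theta_0|\,\|f\|_H .
\]
Here I use that $\|f\|_H\lesssim\|f\|_\R<M$.

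To control $\theta$, I would test the difference against a few fixed functionals.
\begin{itemize}
\item The zero frequency gives $|f_{(0)}-1|<\delta$, using $g_0=1$ and that $f_0$ has mass one. So the mass of $f$ is close to one.
\item In Model \ref{model: symmetric}, $f_k$ and $g_k$ are real. I would then compare the $k$-th coefficients $f_kg_ke^{2\pi ik\theta}$ and $f_{0,k}g_ke^{2\pi ik\theta_0}$ at a frequency where $f_{0,k}g_k\neq0$. With the moduli close and the $\R$-ball controlling $f_k$, the phase difference is of order $\delta$.
\item In Model \ref{model: zeroloc}, I would instead use the saw-tooth functional $\ip{K_\theta f}{S}$. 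I expect to show, via the location-preservation of convolution (Claim \ref{claim: location}) together with $\ip{f}{S}=0$ and the symmetry of $g$, that $\theta\mapsto\ip{K_\theta f}{S}$ equals $\theta\cdot\text{mass}(f)$ up to an error that is smooth in $\theta$. The $\R$-ball bound is what keeps this error under control.
\end{itemize}
In either model, a first-order Taylor expansion in $\theta$, with derivative bounded away from zero because the mass is close to one, then inverts to $|\theta-\theta_0|\le C\delta$ for $\delta$ small.

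I expect this inversion in item (i), especially in Model \ref{model: zeroloc}, to be the main obstacle. The periodic discontinuity of $S$ produces boundary terms that have to be bounded uniformly over $B_\R(M)$, and the Taylor remainder must be controlled uniformly in $f$. Related to this, a global (not merely local) identification of $\theta$ over the compact set $\Theta$ may require an additional compactness argument.
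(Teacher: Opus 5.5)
\textbf{Items (iii) and (ii).} These are fine. For (ii), your frequency split is equivalent to the paper's Hölder interpolation with exponents $\chi$ and $\chi/(\chi-1)$. It has a small advantage: it uses only the upper bound $|g_k|\lesssim|k|^{-\kappa}$, whereas the paper's step $\sum_k|k|^{-2\kappa}|h_k|^2\asymp\|K_{\theta_0}h\|^2$ tacitly needs a lower bound as well.

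\textbf{Item (i), Model \ref{model: symmetric}.} This is where the genuine gap is, and it cannot be closed along your lines, because (i) fails in the stated generality. Comparing one Fourier coefficient only gives $|\sin(2\pi k(\theta-\theta_0))|\lesssim\delta/|f_{0,k}g_k|$. The real numbers $f_k$ may change sign, so $\theta-\theta_0$ is pinned down only modulo $1/(2k)$. These aliases are genuine, so no compactness argument removes them. Indeed, $f=f_0(\cdot-1/2)$ is even, has mass one, lies in $B_\R(M)$, and satisfies $K_\theta f=K_{\theta_0}f_0$ for $\theta=\theta_0\pm 1/2$, which may lie in $\Theta$. Moreover, if $g*f_0$ is constant (e.g.\ $f_0\equiv 1$), there is no usable frequency at all. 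Your argument does work under added hypotheses such as $g_1f_{0,1}\neq 0$ and $\operatorname{diam}\Theta<1/2$, since then $|\sin 2\pi x|\gtrsim |x|$ for $|x|\le\operatorname{diam}\Theta$.

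\textbf{Item (i), Model \ref{model: zeroloc}.} Here the saw-tooth step you flagged really fails, for two reasons.
\begin{itemize}
\item Claim \ref{claim: location} is false for periodic convolution. One has $\ip{g*u}{S}-\ip{u}{S}=\sum_{k\neq 0}(g_k-1)u_k\overline{S_k}$ with $|S_k|=1/(2\pi|k|)$; for instance, $g\equiv 1$ turns every $u$ into a constant.
\item Because $S$ jumps at $\pm 1/2$, $\frac{d}{d\theta}\ip{u(\cdot-\theta)}{S}=\int u-u(\tfrac12-\theta)$. Mass near one does not keep this away from zero.
\end{itemize}
Worse, (i) fails here even locally. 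Since $|S_k|\asymp|k|^{-1}$ while $|g_k|\lesssim|k|^{-\kappa}$, we have $S\notin\operatorname{Range}K_{\theta_0}^*$. Hence $K_{\theta_0}H$ is dense in $\overline{\operatorname{Range}K_{\theta_0}}$, which contains $\dot K_{\theta_0}f_0$; that is, $\tilde i^{(2)}_{\theta_0,f_0}=0$. Now take $h_m\in H\cap S^\beta$ with $K_{\theta_0}h_m\to\dot K_{\theta_0}f_0$, and set $t_m=\|h_m\|_{S^\beta}^{-1}$ (if these norms stay bounded, use an exact preimage and $t\downarrow 0$). Then $(\theta_0+t_m,f_0-t_mh_m)\in\Theta\times B_\R(M)$. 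Your own bound $\|\dot K_{\theta_0}h\|\lesssim\|h\|_{S^\beta}^{1/\chi}\|K_{\theta_0}h\|^{1-1/\chi}$ then gives $\|K_\theta f-K_{\theta_0}f_0\|=o(t_m)$, contradicting (i).

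\textbf{The paper's route.} The paper proves (i) differently, by a global argument. It expands $\|h-h_0\|^2+\|D_\delta\|^2+2\ip{h-h_0}{D_\delta}$ and projects $D_\delta$ onto $S$, which would have spared you the local-to-global step. But it relies on $g*(f-f_0)\perp S$ and $\ip{D_\delta}{S}=-\delta$. These fail on the circle for the same reasons: the first in Model \ref{model: zeroloc}, the second already for $f_0\equiv 1$, where $D_\delta=0$. So the paper's proof does not supply the missing ingredient either.
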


\begin{proof}
	In what follows, we will denote by $L^2$ the space $L^2[-1/2,1/2]$ and by $\|\cdot\|$ the $L^2$-norm. By the last statement of Proposition \ref{prop: regularity}, the map $\theta\mapsto \|K_\theta f\|$ is constant. It follows that for any $f\in L^2$, we have
	\begin{align*}
		\sup_{\theta\in\Theta} \|K_\theta f\|^2 = \|g*f\|^2 = \sum_{k\in\mathbb{Z}} |g_k|^2|f_k|^2\lesssim \sum_{k\in\mathbb{Z}} |k|^{-2\kappa}|f_k|^2\asymp \|f\|_{S^{-\kappa}}^2. 
	\end{align*}
	This shows item (iii) of Assumption \ref{asspt:stab} with $T=S^{-\kappa}$.
	
	Assume now that $f\in S^\beta$ such that $\|f\|_{S^\beta} < M$. Letting $h:=f-f_0$, we then have that $\|h\|_{S^\beta}<M+\|f_0\|_{S^\beta}=:M'$. Item (ii) of Assumption \ref{asspt:stab} is then verified by virtue of the next display.
	\begin{align*}
		\|\dot K_{\theta_0} h\|^2 &= 4\pi^2\sum_{k\in\mathbb{Z}} k^2|g_k|^2 |h_k|^2\\
		&\lesssim \sum_{k\in\mathbb{Z}} |k|^{2(1-\kappa)}|h_k|^2\\
		&= \sum_{k\in\mathbb{Z}}\left( |k|^{2-\frac{2\kappa}{\chi}}|h_k|^{\frac{2}{\chi}}\right)\cdot \left(|k|^{-2\kappa\cdot \frac{\chi-1}{\chi}}|h_k|^{2\cdot\frac{\chi-1}{\chi}}\right)\\
		&\leq \left( \sum_{k\in\mathbb{Z}} |k|^{2(\chi-\kappa)}|h_k|^2 \right)^{\frac{1}{\chi}} \cdot \left(\sum_{k\in\mathbb{Z}}|k|^{-2\kappa}|h_k|^2 \right)^{\frac{\chi-1}{\chi}}\text{ (Hölder's inequality)*}\\
		&\asymp \|h\|_{S^\beta}^{2/\chi}\cdot \|K_{\theta_0}h\|^{2\cdot\frac{\chi-1}{\chi}} \text{ (as $\beta=\chi-\kappa$)}\\
		&\leq (M')^{2/\chi}\|K_{\theta_0}h\|^{2\eta}.
	\end{align*}
	*Hölder's inequality was used with $p=\chi, q=\frac{\chi}{\chi-1}$. This is a valid application since $\chi = \beta+\kappa > 1$ by assumption.
	
	We finally verify item (i). We first introduce some notation. Let $h(\cdot-\theta):=g_\theta*f$ and $h_0(\cdot-\theta):=g_\theta*f_0$. Let also $\delta:=\theta_0-\theta$. We want to show that following holds:
	\begin{align}\label{eq:lemmaGOAL}
		\|h(\cdot-\theta)-h_0(\cdot-{\theta_0})\|^2\gtrsim \delta^2 + \|h-h_0\|^2.
	\end{align}
	Observe that,
	\begin{align*}
		&\|h(\cdot-\theta)-h_0(\cdot-{\theta_0})\|^2 \\
		&= \|h(\cdot-\theta)-h_0(\cdot-{\theta}) + h_0(\cdot-\theta)-h_0(\cdot-{\theta_0})\|^2\\
		&=\|h-h_0\|^2+\int (h_0(t)-h_0(t-\delta))^2dt + 2\int (h(t)-h_0(t))(h_0(t)-h_0(t-\delta))\text{d}t\\
		&=: \|h-h_0\|^2 + \|D_\delta\|^2 + 2 \ip{h-h_0}{D_\delta}.
	\end{align*}
	We will make use of the following claim.
	\begin{claim}\label{claim:auxilaryLemma1}
		Let $L^2:=\sum_{k}|h_{0,k}|^2k^2$, then $\|D_\delta\|^2\leq 4\pi^2L^2\delta^2$.
	\end{claim}
	Let us analyse the inner product $2\ip{h-h_0}{D_\delta}$. Since both $h$ and $h_0$ are in $H$, the functions with location 0, we have that $h-h_0$ also has location 0, and so $h-h_0\perp S$, where we recall that $S$ is the sawtooth function equal to the identity on $[-1/2,1/2]$. Defining $\Pi_S D_\delta$ as the projection of $D_\delta$ on the span of $S$, it follows that $2\ip{h-h_0}{D_\delta} = 2\ip{h-h_0}{D_\delta - \Pi_S D_\delta}$. By CS,
	\begin{align*}
		2|\ip{h-h_0}{D_\delta - \Pi_S D_\delta}|
		&\leq 2\|h-h_0\|\cdot\|D_\delta-\Pi_S D_\delta\|\\
		&= 2\left(\sqrt{V}^{-1}\|h-h_0\|\cdot \sqrt{V}\|D_\delta-\Pi_SD_\delta\|\right)\\
		&\leq V^{-1}\|h-h_0\|^2 + V\|D_\delta-\Pi_S D_\delta\|^2,
	\end{align*}
	where $V:=\frac{4\pi^2L^2}{4\pi^2L^2 - 1/24}$. Assume for now that $4\pi^2L^2 > 1/12$, and thus $V>1$.
	Note that $\|D_\delta-\Pi_S D_\delta\|^2 = \|D_\delta\|^2 - \|\Pi_S D_\delta\|^2$, and that $\|\Pi_S D_\delta\|^2=\frac{|\ip{D_\delta}{S}|^2}{\|S\|^2}$. Furthermore, since $\int g = \int f_0 = 1$ and they both have location 0, straightforward computations reveal that $g*f_0(\cdot - c)$ has location $c$ for any $c\in\mathbb{R}$. Hence, $D_\delta$ has location $-\delta$. This yields $|\ip{D_\delta}{S}|^2 = \delta^2$, and as $\|S\|^2=1/12$, we obtain $\|\Pi_S D_\delta\|^2 = \frac{\delta^2}{12}$. It follows that:
	\begin{align*}
		\|h(\cdot-\theta)-h_0(\cdot-{\theta_0})\|^2
		&\geq \|h-h_0\|^2 + \|D_\delta\|^2
		 - \left(V^{-1}\|h-h_0\|^2 + V\|D_\delta\|^2 - V\frac{\delta^2}{12}\right)\\
		&= (1-V^{-1})\|h-h_0\|^2 + \frac{V}{12}\delta^2 - (V-1)\|D_\delta\|^2\\
		&\geq (1-V^{-1})\|h-h_0\|^2 +\left(4\pi^2L^2 - V(4\pi^2L^2 - 1/12) \right)\delta^2,
	\end{align*}
	where the last line follows by Claim \ref{claim:auxilaryLemma1}. Since $V>1$, $1-V^{-1}$ is greater than 0. By assumption that $4\pi^2L^2>1/12$ and by definition of $V$, the coefficient of $\delta^2$ is also positive. Taking the minimum of both coefficients concludes the proof for the case $4\pi^2L^2 > 1/12$. 
	
	If $4\pi^2L^2 \leq 1/12$, then Claim \ref{claim:auxilaryLemma1} implies $\|D_\delta\|^2\leq \delta^2/12$. Let then $V=2$. We obtain:
	\begin{align*}
		\|h(\cdot-\theta)-h_0(\cdot-{\theta_0})\|^2
		&\geq \frac{1}{2} \|h-h_0\|^2 - \|D_\delta\|^2 + \frac{1}{6}\delta^2\\
		&\geq \frac{1}{2} \|h-h_0\|^2 - \frac{1}{12}\delta^2 + \frac{1}{6}\delta^2\\
		&\geq \frac{1}{12} (\|h-h_0\|^2+\delta^2).
	\end{align*}
	We conclude with the proof of Claim \ref{claim:auxilaryLemma1} for completeness:
	\begin{align*}
		\|D_\delta\|^2 &= \|h_0-h_0(\cdot-\delta)\|^2\\
		&= \sum_{k\in\mathbb{Z}} |h_{0,k}\left(1-e^{-i2k\pi\delta}\right)|^2\\
		&= 4\sum_{k\in\mathbb{Z}} |h_{0,k}|^2\sin^2(k\pi\delta)\\
		&\leq 4\pi^2\delta^2\sum_{k\in\mathbb{Z}}|h_{0,k}|^2k^2.
	\end{align*}
\end{proof}


\subsection{BvM for Semi-blind Deconvolution}

We first construct Gaussian series base priors that satisfy Assumption \ref{asspt:prior}. Consider the linear space $H$ from either Models \ref{model: symmetric} or \ref{model: zeroloc} and let $\{h_k\}_k$ be a basis for it; natural choices are the cosine basis for Model \ref{model: symmetric}, and 
the Fourier basis consisting of all cosines together with sine functions modified to be orthogonal to $S$ for Model \ref{model: zeroloc}.
Let then $\beta\geq 0$ and $\alpha>\beta+1/2$. Let also $\sigma_k \asymp |k|^{-\alpha}$ be a sequence of standard deviations, and $Z_k$ be a sequence of independent standard normal. $\pi'_f$ is defined as the distribution of the random series
	\begin{align}\label{eq: prior}
		f' = \sum_{k} \sigma_k Z_k h_k.
	\end{align} 
It is well known that $\pi'_f$ is supported on $S^\beta\cap H$ and so the first part of Assumption \ref{asspt:prior} is verified for $\R=S^\beta \cap H$, which is what we want in view of Proposition \ref{prop: stability}. Furthermore, the RKHS  of $\pi'_f$ is known to be
\[\HHH = \left\{f\in H: \sum_{k}\sigma_k^{-2}|\ip{f}{h_k}|^2 <\infty\right\},\]
which can be shown equal to $S^\alpha\cap H$.  Now, to select the correct re-scaling sequence $\tau_n$, we must first determine the rate $\eps_n$ which should satisfy condition \eqref{eq:covering}. In view of Proposition \ref{prop: stability}, if we assume that $g$ has coefficients decaying at a rate  $|k|^{-\kappa}$, then $T=S^{-\kappa}$. Condition \eqref{eq:covering} can then be rewritten as
\[\log{N(\eps_n, B_{S^\alpha\cap H}(1), \|\cdot\|_{S^{-\kappa}})}\leq n\eps_n^2.\]
The left hand side above is of order $\eps_n^{-1/(\alpha+\kappa)}$ by Lemma B.1 in \cite{gugushviliVdV}. This yields $\eps_n = n^{-(\alpha+\kappa)/(2\alpha+2\kappa+1)}$, and subsequently $\tau_n = n^{1/(4\alpha+4\kappa+2)}$.

All assumptions are therefore checked and we can now derive BvM results for both Models \ref{model: symmetric} and \ref{model: zeroloc}. As a last observation before stating the BvMs, we mention that choosing a large value for $\beta$ is always possible, but only beneficial to obtain a larger $\eta$, which in turn will yield a faster rate $\xi_n=\eps_n^{\eta}$ for contraction in the derivative $\|\dot K_{\theta_0}\cdot\|$. Since this only matters in cases (i) and (ii) of Theorem \ref{thm: main2} when the maps $\theta\mapsto\|K_{\theta}f\|$ are not constant, it will not be useful to consider $\beta>0$ in our examples. We thus select $\beta=0$ in both applications.

\begin{thm}[BvM -- Model 1]\label{thm: model1}
	Let $H$ be as in Model \ref{model: symmetric} and let $\Theta\subset (-1/2,1/2)$ be compact. Assume that the data $X^{(n)}$ in \eqref{eq: BM_obs} is generated according to the pair $(\theta_0, f_0)$ with $\theta_0$ an interior point of $\Theta$, and with $f_0\in S^\alpha\cap H$ with mass equal to 1. Assume also that $g$ has mass 1 and that $|g_k|\lesssim |k|^{-\kappa}$ for $\kappa>5/2$. Let then $\alpha>1/2$ and let $\Pi=\pi_\theta\times\pi_f$ be the prior on $(\theta, f)$ with $\pi_\theta$ having a positive density w.r.t the Lebesgue measure, and with $\pi_f$ the distribution of $n^{1/(4\alpha+4\kappa+2)}f'$ for $f'\sim \pi'_f$, with $\pi'_f$ as in \eqref{eq: prior}. Then the BvM holds at $(\theta_0, f_0)$ if $\|g'*f_0\|^2>0$.
\end{thm}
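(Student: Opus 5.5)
The plan is to obtain Theorem \ref{thm: model1} as a direct application of Theorem \ref{thm: main2} in scenario (iv). In that scenario the map $\theta\mapsto\|K_\theta f\|_{L^2}$ is constant and $\gamma_{\theta_0,f_0}\in\HHH$. Both hold here: the first by Proposition \ref{prop: regularity}, the second because Lemma \ref{lemma: info_deconv} gives $\gamma^{(1)}_{\theta_0,f_0}=0$, which lies trivially in any RKHS. The same lemma gives $\tilde i_{\theta_0,f_0}=\|g'*f_0\|^2_{L^2}$, which is positive exactly under the hypothesis of the theorem. So the work reduces to checking, one by one, the standing hypotheses of Theorem \ref{thm: main2} with $H_1=H_2=L^2[-1/2,1/2]$, $H$ as in Model \ref{model: symmetric}, $\beta=0$ and $\R=L^2\cap H$.

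\textbf{Assumption \ref{asspt1} and \eqref{eq: finiteEntropy}.} Since $\kappa>5/2$, we have $\sup_k|k|^{5/2+\delta}|g_k|<\infty$ with $\delta=\kappa-5/2>0$. Proposition \ref{prop: regularity} then gives Assumption \ref{asspt1} with $S=S^r$ for any $r\in(1/2,1/2+\delta]$, so the entropy integral \eqref{eq: finiteEntropy} is finite.

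\textbf{Assumption \ref{asspt:stab}.} We apply Proposition \ref{prop: stability} with $\beta=0$, so $\chi=\kappa>1$, $T=S^{-\kappa}$ and $\eta=1-1/\kappa$. Its hypotheses hold for the following reasons. Symmetric functions have location zero, so Model \ref{model: symmetric} is contained in Model \ref{model: zeroloc} and the proof of that proposition applies. We have $f_0\in S^\alpha\cap H\subset\R$. Finally, both $f_0$ and $g$ have mass $1$ by assumption.

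\textbf{Assumption \ref{asspt:prior} and the rate.} Since $\alpha>1/2=\beta+1/2$, the series prior \eqref{eq: prior} is supported on $\R$. Its RKHS is $\HHH=S^\alpha\cap H$, which contains $f_0$ as required by Theorem \ref{thm:contraction}. By Lemma B.1 of \cite{gugushviliVdV}, \eqref{eq:covering} holds with $T=S^{-\kappa}$ for $\eps_n=n^{-\tau}$, where $\tau=(\alpha+\kappa)/(2\alpha+2\kappa+1)$. This $\tau$ lies in $(1/4,1/2)$, as Theorem \ref{thm: main2} requires: the bound $\tau<1/2$ is automatic, and $\tau>1/4$ is equivalent to $\alpha+\kappa>1/2$. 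The rescaling factor is taken exactly as prescribed in the discussion preceding the theorem for this choice of $\eps_n$, namely $\tau_n=n^{1/(4\alpha+4\kappa+2)}$, and this is the $\pi_f$ of the statement. Once all of this is in place, scenario (iv) of Theorem \ref{thm: main2} gives the BvM at $(\theta_0,f_0)$.

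The step I expect to need the most care is confirming that the rescaling in the statement is the one the contraction machinery actually uses. Theorem \ref{thm: main2} fixes $\tau_n$ in terms of $\sqrt n\eps_n$. The statement of Theorem \ref{thm:contraction} writes $\tau_n=(\sqrt n\eps_n^{-1})$, and the paragraph before Theorem \ref{thm: model1} writes $\tau_n=n^{1/(4\alpha+4\kappa+2)}$. I will follow the statement as worded, namely the paper's $\tau_n=n^{1/(4\alpha+4\kappa+2)}$. I will then re-trace the only places where $\tau_n$ enters: the prior-mass bound \eqref{eq: KL}, and the sieve $H_n$ of \eqref{eq:sieves_contraction} that confines $f$ to $B_\R(M)$. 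Both go through the Gaussian small-ball and Borell--TIS inequalities with RKHS norm $\tau_n^{-1}\|\cdot\|_\HHH$. For the mass bound I would use $f_0\in\HHH$ to control the decentering term and \eqref{eq:covering} to control the centered small-ball term. Note that because $\gamma_n=0$, the prior-shift conditions \eqref{eq: priorLikelihoodRatio} and \eqref{eq: priorshift} are vacuous (the log density ratio is identically $0$). The condition $n\eps_n^2\|K_{\theta_0}(\gamma_n-\gamma_{\theta_0,f_0})\|\to0$ is likewise trivial. Thus, apart from this scaling check, the argument is pure bookkeeping.
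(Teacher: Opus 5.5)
Your route is the paper's. You use scenario (iv) of Theorem \ref{thm: main2}, fed by Proposition \ref{prop: regularity}, Proposition \ref{prop: stability} with $\beta=0$, and $\gamma_{\theta_0,f_0}=0\in\HHH$ from Lemma \ref{lemma: info_deconv}. Those checks are fine, and so is your verification that $\tau=(\alpha+\kappa)/(2\alpha+2\kappa+1)\in(1/4,1/2)$.

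The gap is in the step you flagged yourself, which you resolve in the wrong direction. For this $\eps_n$ one has $\sqrt n\,\eps_n=n^{1/(4\alpha+4\kappa+2)}$. So the factor in the statement is $\sqrt n\,\eps_n$, the \emph{reciprocal} of the rescaling $\tau_n=(\sqrt n\,\eps_n)^{-1}$ that Theorem \ref{thm: main2} prescribes; Assumption \ref{asspt:prior} even requires $\tau_n\downarrow 0$. Theorem \ref{thm: main2} therefore does not apply to the prior you commit to, and the re-tracing you promise does not go through:
\begin{itemize}
\item With $f=\sqrt n\,\eps_n f'$ you get $\pi_f(B_\R(M))=\pi'_f(\|f'\|_{L^2}<M/(\sqrt n\,\eps_n))\to 0$. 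The sieve bound \eqref{eq:cc3} then fails outright: the prior puts vanishing, not overwhelming, mass on $H_n\subset B_\R(M)$.
\item The small-ball step behind \eqref{eq: KL} would need $\phi_0(c\,n^{-1/2})\lesssim n\eps_n^2$, where $\phi_0$ is the small-ball exponent of $\pi'_f$ in $\|\cdot\|_{S^{-\kappa}}$. But for $\sigma_k\asymp k^{-\alpha}$ one has $\phi_0(c\,n^{-1/2})\asymp n^{1/(2\alpha+2\kappa-1)}\gg n^{1/(2\alpha+2\kappa+1)}=n\eps_n^2$.
\end{itemize}
Only the decentering term improves under upscaling.

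The fix is to read the exponent in the statement, and in the paragraph before it, as a sign typo. The prior $\pi_f$ must be the law of $n^{-1/(4\alpha+4\kappa+2)}f'=(\sqrt n\,\eps_n)^{-1}f'$. With that prior, Theorem \ref{thm: main2} applies verbatim and nothing needs re-tracing. Condition \eqref{eq: KL} and the sieve are already handled in the proof of Theorem \ref{thm:contraction}. With $\gamma_n=0$, condition \eqref{eq: priorLikelihoodRatio} and $n\eps_n^2\|K_{\theta_0}(\gamma_n-\gamma_{\theta_0,f_0})\|\to0$ are trivial, as you say. This is what the paper's short proof does implicitly.
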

\begin{proof}
	The last condition ensures that we have positive efficient information (c.f. Lemma \ref{lemma: info_deconv}). Assumptions \ref{asspt1}, \ref{asspt:stab} are verified in view of Propositions \ref{prop: regularity} and \ref{prop: stability} respectively, while Assumption \ref{asspt:prior} and \eqref{eq:covering} are verified in view of the above discussion. Furthermore, by Lemma \ref{lemma: info_deconv} we have that $\gamma_{\theta_0,f_0}=0$, and so it trivially follows that $\gamma_{\theta_0,f_0}\in\HHH=S^\alpha\cap H$. Besides, the last statement of Proposition \ref{prop: regularity} implies that we are in scenario (iv) of Theorem \ref{thm: main2}, which concludes the proof.
\end{proof}

\begin{thm}[BvM -- Model 2]
	Let $H$ be as in Model \ref{model: zeroloc} and let $\Theta\subset (-1/2,1/2)$ be compact. Assume that the data $X^{(n)}$ in \eqref{eq: BM_obs} is generated according to the pair $(\theta_0, f_0)$ with $\theta_0$ an interior point of $\Theta$, and with $f_0\in S^\alpha\cap H$ with mass equal to 1. Assume also that $g$ has mass 1 and that $|g_k|\lesssim |k|^{-\kappa}$ for $\kappa>5/2$. Let then $\alpha>1$ and let $\Pi=\pi_\theta\times\pi_f$ be the prior on $(\theta, f)$ with $\pi_\theta$ having a positive density w.r.t the Lebesgue measure, and with $\pi_f$ the distribution of $n^{1/(4\alpha+4\kappa+2)}f'$ for $f'\sim \pi'_f$, with $\pi'_f$ as in \eqref{eq: prior}. Then the BvM holds at $(\theta_0, f_0)$ if $\int_{-1/2}^{1/2}f_0'(t)tdt \neq 0$
\end{thm}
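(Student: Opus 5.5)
The plan mirrors the proof of Theorem \ref{thm: model1}: we apply Theorem \ref{thm: main2} in scenario (iii) or (iv). Propositions \ref{prop: regularity} and \ref{prop: stability} give Assumptions \ref{asspt1} and \ref{asspt:stab} (with $\beta=0$, $T=S^{-\kappa}$, and $\kappa>5/2$ supplying the decay needed in Proposition \ref{prop: regularity}). The discussion preceding Theorem \ref{thm: model1} gives Assumption \ref{asspt:prior} and \eqref{eq:covering}, with $\eps_n=n^{-(\alpha+\kappa)/(2\alpha+2\kappa+1)}$. Since $\alpha+\kappa>1/2$, we have $\tau:=(\alpha+\kappa)/(2\alpha+2\kappa+1)\in(1/4,1/2)$, so $\eps_n$ is admissible. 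Proposition \ref{prop: regularity} also shows that $\theta\mapsto\|K_\theta f\|$ is constant, so the term $n\eps_n^2\xi_n^2$ plays no role. Positivity of the efficient information follows from Lemma \ref{lemma: info_deconv}: since $\tilde i^{(2)}_{\theta_0,f_0}=\lambda^2/12$ with $\lambda=12\int f_0'(t)t\,dt$, the hypothesis $\int f_0'(t)t\,dt\neq0$ gives $\tilde i>0$. This uses that $f_0\in S^\alpha$ with $\alpha>1$ is differentiable, so $f_0'$ exists and is in $L^2$.

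The main step is to pick a least favourable direction that is either in the RKHS $\HHH=S^\alpha\cap H$ or can be approximated well enough by elements of it. Lemma \ref{lemma: info_deconv} gives the family $\gamma^{(2),A}_{\theta_0,f_0}=-f_0'+\lambda AS$. We take $A$ to be repeated convolution with $g$, that is $A=g^{*m}$ for large $m$, which is location preserving by Claim \ref{claim: location}. Then $AS$ is very smooth: its Fourier coefficients are $O(|k|^{-1-m\kappa})$, so $AS\in S^\alpha$ for $m$ large. The obstacle is $f_0'$, which only lies in $S^{\alpha-1}$ and in general not in $S^\alpha$. So $\gamma\notin\HHH$, and we must work in scenario (iii). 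There we need
\[
n\eps_n^2\,\delta^{K_{\theta_0}}_{\gamma}(\rho_n)+n\eps_n^4\,\psi_\gamma(\rho_n)\to0 .
\]
We must also check that $\gamma$ (or its approximant) lies in $H$. Indeed $\int t f_0'(t)\,dt$ contributes $-\lambda/12$ and $\lambda\langle AS,S\rangle=\lambda/12$, so the total location is zero. The approximants must be corrected along the same lines.

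For the approximation we truncate the Fourier series. Let $h_n$ be the projection of $\gamma$ onto frequencies $|k|\le J$, corrected by a multiple of $AS$ so that it stays orthogonal to $S$. The correction coefficient is of the order of the tail, so it is harmless. Since $\gamma\in S^{\alpha-1}$, we obtain $\rho_n=\|\gamma-h_n\|\lesssim J^{-(\alpha-1)}$ and $\psi_\gamma(\rho_n)\lesssim\|h_n\|^2_{S^\alpha}\lesssim J^{2}$. The smoothing of $K_{\theta_0}$ gives
\[
\delta^{K_{\theta_0}}_\gamma(\rho_n)\lesssim J^{-\kappa}\,\|\gamma-h_n\|\lesssim J^{-\kappa-\alpha+1}.
\]
Because $\eps_n$ decays only slightly slower than $n^{-1/2}$, we have $n\eps_n^2=n^{1/(2\alpha+2\kappa+1)}$ and $n\eps_n^4=n^{-(2\alpha+2\kappa-1)/(2\alpha+2\kappa+1)}$. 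It therefore suffices to choose $J=n^{a}$ with
\[
\frac{1}{(2\alpha+2\kappa+1)(\alpha+\kappa-1)}<a<\frac{2\alpha+2\kappa-1}{2(2\alpha+2\kappa+1)} .
\]
This interval is nonempty because $(\alpha+\kappa-1)(2\alpha+2\kappa-1)>2$ when $\alpha>1$ and $\kappa>5/2$. This balancing is the step I expect to be most delicate. If the truncation bounds turn out too crude, the first thing I would try is to exploit the extra freedom in $A$ to smooth the $f_0'$ component as well. Once the balancing works, scenario (iii) of Theorem \ref{thm: main2} yields the BvM at $(\theta_0,f_0)$.
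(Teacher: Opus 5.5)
\textbf{Gap: the correction that keeps the approximant in $H$.} Your overall route is the paper's: scenario (iii) of Theorem \ref{thm: main2}, the smoothed least favourable direction $\gamma=-f_0'+\lambda g^{*m}*S$, spectral truncation, and a balancing. Your nonemptiness condition $(\alpha+\kappa-1)(2\alpha+2\kappa-1)>2$ is exactly the paper's $\alpha+\kappa>(3+\sqrt{17})/4$. The step that fails as written is the correction by a multiple of $AS$.

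Write $P_J$ for the Fourier projection onto $|k|\le J$ and set $h_n=P_J\gamma+c_JAS$ with $c_J=12\langle (I-P_J)\gamma,S\rangle$. Then $\gamma-h_n=(I-P_J)\gamma-c_JAS$. The second piece is a fixed low-frequency function on which $K_{\theta_0}$ gives no smoothing gain: $\|K_{\theta_0}(c_JAS)\|=|c_J|\,\|g*AS\|$, with $\|g*AS\|>0$ independent of $n$.

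So the bound $\delta^{K_{\theta_0}}_\gamma(\rho_n)\lesssim J^{-\kappa}\|\gamma-h_n\|$ is false for your $h_n$. You only get $\|K_{\theta_0}(\gamma-h_n)\|\lesssim J^{-(\alpha+\kappa-1)}+|c_J|$, and this is exactly the quantity your balancing relies on. Suppose, as you assert, that $c_J$ is only of the order of the tail $\|(I-P_J)\gamma\|\asymp J^{-(\alpha-1)}$. Then the lower constraint becomes $a>1/((2\alpha+2\kappa+1)(\alpha-1))$, and your interval is empty for $\alpha$ near $1$. For example, $\alpha=1.05$, $\kappa=2.6$ requires $a>2.4$ but also $a<0.38$. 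So calling the correction ``harmless'' is not justified, and on part of the parameter range the argument as written fails.

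\textbf{The repair is short.} Since $I-P_J$ is an orthogonal projection, $\langle (I-P_J)\gamma,S\rangle=\langle (I-P_J)\gamma,(I-P_J)S\rangle$. Together with $\|(I-P_J)S\|\asymp J^{-1/2}$ this gives $|c_J|\lesssim J^{-(\alpha-1/2)}$. You can then proceed in either of two ways.
\begin{enumerate}[label=(\alph*)]
\item Keep the $AS$-correction and rebalance with $\delta\lesssim J^{-(\alpha-1/2)}$. The interval for $a$ is nonempty iff $(\alpha-\tfrac12)(2\alpha+2\kappa-1)>2$. This holds because the left side exceeds $3$ when $\alpha>1$ and $\kappa>5/2$.
\item Correct with a high-frequency function instead. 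Let $q_J=(P_{2J}-P_J)S/\|(P_{2J}-P_J)S\|^2$, so that $\langle q_J,S\rangle=1$ and $\|q_J\|\asymp J^{1/2}$. Set $h_n=P_J\gamma+\langle (I-P_J)\gamma,S\rangle\, q_J$. This $h_n$ lies in $S^\alpha\cap H$ and keeps all three of your bounds:
\[
\|\gamma-h_n\|\lesssim J^{-(\alpha-1)},\qquad \|h_n\|_{S^\alpha}\lesssim J,\qquad \|K_{\theta_0}(\gamma-h_n)\|\lesssim J^{-(\alpha+\kappa-1)}.
\]
Your balancing then goes through unchanged.
\end{enumerate}
The paper avoids the issue by truncating directly in a basis of $H$ rather than correcting a Fourier truncation.
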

\begin{proof}
	The last condition ensures that we have positive efficient information in view of Lemma \ref{lemma: info_deconv}. Following the same logic as for the proof of Theorem \ref{thm: model1}, we find ourselves in the situation of scenario (iii) or (iv) depending on whether or not $\gamma_{\theta_0,f_0}\in\HHH$. Since this is a smoothness condition, and since we have flexibility in choosing $\gamma_{\theta_0,f_0}$ (c.f. Lemma \ref{lemma: info_deconv}), we will select it as smooth as possible. Let then $m\in\mathbb{N}$ be such that $g^{(m)}*S\in S^{\alpha-1}\cap H$, where $g^{(m)}$ is convolution with $g$ applied $m$ times. Now, since convolution with $g$ is self-adjoint and location preserving (c.f. Claim \ref{claim: location}), it follows that \[\gamma_{\theta_0,f_0} = -f'+\lambda g^{(m)}*S\] is a valid least favourable direction. Because $f'\in S^{\alpha-1}\cap H$, we obtain that $\gamma_{\theta_0,f_0}\in S^{\alpha-1}\cap H$. Standard computations (as in Example 4.2 in \cite{magra_VdV_HvZ}) then yield that $\psi_{\gamma_{\theta_0, f_0}}(\eps) \lesssim \eps^{-2\alpha/(\alpha-1)}$. On the other hand, using the cut-off sequence approximation to $\gamma_{\theta_0, f_0}$, namely $\gamma^{(n)} := \sum_{|k|>K_n} \ip{\gamma_{\theta_0,f_0,}}{h_k}h_k$, one can show that whenever $\|\gamma^{(n)}-\gamma_{\theta_0,f_0}\|\leq\rho_n$ then $K_n\asymp \rho_n^{-1/(\alpha-1)}$ and also that $\delta^{K_{\theta_0}}_{\gamma_{\theta_0,f_0}}(\rho_n)\lesssim K_n^{-\kappa-\alpha+1}$. We thus obtain that
	\[n\delta^{K_{\theta_0}}_{\gamma_{\theta_0,f_0}}(\rho_n)^2 \lesssim n\rho_n^{\frac{2\kappa+2\alpha-2}{\alpha-1}} \ \text{and } \psi_{\gamma_{\theta_0,f_0}}(\rho_n)\lesssim \rho_n^{-2/(\alpha-1)}.\]
	We want both right hand sides in the above display to be as small as possible, but they have opposite behaviours with respect to $\rho_n$. Setting them both equal gives that the optimal rate is  $\rho_n= n^{-(\alpha-1)/(2\kappa+2\alpha)}$. The rate conditions in item (iii) of Theorem \ref{thm: main2} then reduce to checking that $n\eps_n^4n^{1/(\kappa+\alpha)}\to 0$. Tedious computations show that is satisfied for $\alpha > -\kappa + (3+\sqrt{17})/4$. By assumption that $\kappa>5/2$ and $\alpha>1$, this is always satisfied. This concludes the proof.
	
\end{proof}

\section{Application to Attenuated X-ray Transforms}

The classical X-ray transform of a function $f:\RR^d\to \RR$ (or more generally $f:\RR^d\to \CC$) consists of the collection of all of its line integrals. The inverse problem of recovering $f$ from its X-ray transform has wide applications in tomographic imaging, geophysics and materials science. When $f$ is defined on a manifold, we can consider the more general geodesic X-ray transform. Roughly speaking, the latter records the integrals of $f$ along \textit{geodesics}, which are curves minimising the local distance between two points. For instance, in a Euclidean space, these are just are straight lines, and the transform reduces to the classical X-ray transform. On a curved manifold, however, geodesics may bend according to the metric, and this curvature crucially influences the behavior of the transform.

To formalise this setting, let $(M,g)$ be a Riemannian manifold of dimension $d\geq 2$  and denote by $\partial M$ its boundary. We define the \textit{unit sphere bundle}
\[SM:=\{(x,v): x\in M, g_x(v,v)=1\},\]
whose elements consists of points $x$ in $M$ coupled with a tangent vector $v$ at $x$ with norm $g_x(v,v)$ equal to one. Intuitively, $SM$ collects all possible unit directions one can move in starting from each point in $M$. Each vector $v$ at a point $x$ determines a \textit{unique geodesic} $\gamma_{(x,v)}(t)$ started at $x$ in direction $v$, hence satisfying $\gamma_{(x,v)}(0)=x$ and $\dot\gamma_{(x,v)}(0) = v$. To make notation more compact, we will often refer to the \textit{geodesic flow}
\[\p_t:SM\to SM, \;\;\; \p_t(x,v):=(\gamma_{(x,v)}(t), \dot\gamma_{(x,v)}(t)),\]
which describes how a point in $SM$ moves along geodesics with constant unit speed. We further consider the canonical projection map $\pi:SM\to M, (x,v)\mapsto x$.

The \textit{exit time} $\tau(x,v)$ of the geodesic started at $(x,v)$ is the first time at which it reaches the boundary, namely the first time at which $\pi(\p_t(x,v))\in\partial M$. We will assume that $M$ is a \textit{simple} manifold, which means that (i) it is \textit{nontrapping}, i.e. $\sup_{(x,v)\in SM}\tau(x,v) <\infty$, (ii) its boundary is strictly geodesic convex and (iii) it is free of conjugate points. 

The \textit{boundary bundle} $\partial SM$ is nothing more than the restriction of $SM$ to points $(x,v)$ with $x\in\partial M$. For a point $x\in \partial M$, let $\nu(x)$ be the inner pointing normal at $x$, i.e. towards the inside of the manifold $M$. Three important subsets of $\partial SM$ are the \textit{influx boundary} $\partial_+ SM$ (points pointing towards the inside of $M$), the \textit{outflux boundary} $\partial_-SM$ (points pointing outside of $M$), and the \textit{glancing boundary} $\partial_0 SM$. Formally, 
\begin{align*}
	\partial SM&:= \{(x,v)\in SM: x\in \partial M\},\\
	\partial_{\pm} SM&:= \{(x,v)\in \partial SM: \pm g_x(v,\nu(x))\geq 0\},\\
	\partial_0 SM&:= \{(x,v)\in \partial SM: g_x(v,\nu(x))=0\}.
\end{align*}

We will allow for going ``backwards" along geodesics. Noting that the geodesic started at $(x,-v)$ exits at time $\tau(x,-v)$, we will use the convention that $\p_{-t}(x,v) = \p_{t}(x,-v)$. The geodesic flow $\p_t$ is thus properly defined on $SM$ for $t\in [-\tau(x,-v),\tau(x,v)]$. In particular, for any point $(x,v)\in SM$, we have that
\begin{align*}
	&\p_{-\tau(x,-v)}(x,v) \in \partial_+SM, \\ &\p_0(x,v)=(x,v),\\ &\p_{\tau(x,v)}(x,v)\in \partial_-SM, 
\end{align*} 
Lastly, we let the \textit{footpoint map} $F:SM\to \partial_+SM$ be the map that sends a point $(x,v)$ to its starting point on the influx boundary, i.e.  $F(x,v):=\p_{-\tau(x,-v)}(x,v)$. An important fact that we will use is that $M$ being simple (specifically conditions (i) and (ii) above) implies that both $\tau$ and $F$ are smooth on $SM\backslash \partial_0 SM$. We will denote smooth (complex-valued) functions over $M, SM$ and $\partial_+SM$ by $C^\infty(M), C^\infty(SM)$ and $C^\infty(\partial_+SM)$ respectively.

We are now ready to formally introduce the geodesic X-ray transform of a function $f:M\to \CC$, as the function $K_0f:\partial_+SM\to \CC$ defined for $(x,v)\in \partial_+SM$ as the integral of $f$ along the geodesic started at $(x,v)$. Namely,
\begin{align}\label{eq: xray}
 K_0f(x,v) := \int_0^{\tau(x,v)}f(\pi(\p_t(x,v)))dt.
\end{align}
The map $K_0:C^\infty(M)\to C^\infty(\partial_+SM)$ is injective, and can also be extended to a bounded operator from $L^2(M)$ into $L_2(\partial_+SM)$ (c.f. \cite{Sharafutdinov}), where $L^2(\partial_+SM)$ is understood as the $L^2$-space w.r.t the measure $\mu d\Sigma^{2d-2}$, with $d\Sigma^{2d-2}$ the natural Sasaki area form on $\partial SM$ and $\mu(x.v)=|g_x(v,\nu(x))|.$ The adjoint $K_0^*:L^2(\partial_+SM)\to L^2(M)$ is obtained through the back projection formula,
\begin{align}\label{eq: xray_adjoint}
	K_0^*h(y) = \int_{S_yM}h(F(y,w))dS(w),
\end{align}
with $S_yM$ the space of unit tangent vectors at $y$. 

A natural generalisation of the operator $K_0$ in \eqref{eq: xray} is the \textit{attenuated X-ray transform}, which given an attenuation function $a\in C^\infty(M)$ is defined as
\begin{align}\label{eq: attenuated_xray}
	K_af(x,v):=\int_0^{\tau(x,v)} f(\pi(\p_t(x,v)))e^{-\int_0^t a(\pi(\p_s(x,v)))ds}dt.
\end{align}
The interpretation of $K_a$ is that as the integral of $f$ is computed along the geodesic, it accumulates an attenuation encoded in $M$. The mapping properties of $K_a$ and $K_a^*$ are strongly related to the ones of $K_0$ and $K_0^*$ (see for instance \cite{frigyik}, \cite{monard2019}). 

We will consider constantly attenuated X-ray transforms in this section. For technical ease, we will assume that the attenuation is effective only over a compact set $K$ within the interior of $M$ and that it is purely imaginary. That is, let $\chi\in C^\infty(M)$ be real-valued, supported compactly within the interior of $M$ and satisfying $\chi|_{_K}=1$. For a scalar $\theta$, we denote $K_\theta:= K_{i\theta\cdot\chi}$ for $i$ the imaginary unit. Assuming $\theta\in\Theta\subset (0,2\pi)$ compact, we consider observations in the signal-in white noise model below,
\begin{align}\label{eq: xray_model}
	X^{(n)} = K_\theta f + \frac{1}{\sqrt{n}}\dot W.
\end{align}
For $H_1 = L^2(M)$ and $H_2=L^2(\partial_+SM)$, this is exactly the observational model in \eqref{eq: obs} (with $\dot W$ to be understood as the iso-Gaussian process for $L^2(\partial_+SM)$). The parameter space $H\subset H_1$ for $f$ will be explicitly given later in this section. For now, it suffices to know that the closure of $H$ in $H_1$ is $H_1$. The following lemma characterises the information under certain smoothness conditions. 

\begin{lemma}\label{lemma: xray_info}
	Consider the function $A_\chi:SM\to \CC$, defined as 
	\[A_\chi(y,w) = \int_0^{\tau(y,-w)}\chi(\pi(\p_s(F(y,w))))ds.\]
	Then for any $(\theta,f)\in \RR\times L^2(M)$, we have
	\begin{align}
		K_\theta f(x,v) &= \int_0^{\tau(x,v)}f(\pi(\p_t(x,v)))e^{-i\theta A_\chi(\p_t(x,v))}dt \label{eq: K_theta_xray}\\
		\dot K_\theta f(x,v) &= \int_0^{\tau(x,v)}-if(\pi(\p_t(x,v)))A_\chi(\p_t(x,v))e^{-i\theta A_\chi(\p_t(x,v))}dt,\label{eq: dev_K_theta_xray}
	\end{align}
	where $\dot K_\theta f$ is the derivative of the map $\theta\mapsto K_\theta f$. 
	
	If $f\in C^\infty(K)$, then $K_\theta^*\dot K_\theta f\in C^\infty(M)$. The LFD $\gamma_{\theta,f}$ is then given by 
	\[\gamma_{\theta,f} = (K_\theta^*K_\theta)^{-1}K_\theta^*\dot K_\theta f.\]
	The efficient Fisher information is then given by \eqref{eq: info}.	If $M$ is the unit disk $\DD:=\{x\in\RR^2, \|x\|< 1\}$, then $\gamma_{\theta,f}\in C^\infty(M)$. 
\end{lemma}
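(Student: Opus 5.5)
The plan is to prove the three assertions of Lemma \ref{lemma: xray_info} in order: the explicit formulas \eqref{eq: K_theta_xray}--\eqref{eq: dev_K_theta_xray}, the smoothness of $K_\theta^*\dot K_\theta f$ together with the normal-equation form of the LFD, and finally smoothness of $\gamma_{\theta,f}$ on the disk.

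\textbf{Step 1: the explicit formulas.} Fix $(x,v)\in\partial_+SM$ and $t\in[0,\tau(x,v)]$. Then $F(\p_t(x,v))=(x,v)$ and $\tau(\p_t(x,v)^{-})=t$, where the backward exit time is $\tau(y,-w)$ evaluated at $(y,w)=\p_t(x,v)$. Hence, by the definition of $A_\chi$ and the flow property $\p_s\circ\p_{-t}\circ\p_t=\p_s$,
\[
A_\chi(\p_t(x,v))=\int_0^t\chi(\pi(\p_s(x,v)))\,ds .
\]
Substituting this into \eqref{eq: attenuated_xray} with $a=i\theta\chi$ gives \eqref{eq: K_theta_xray}.

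For \eqref{eq: dev_K_theta_xray}, the nontrapping assumption gives $0\le A_\chi\le \|\chi\|_\infty\sup\tau<\infty$, and $|e^{-i\theta A_\chi}|=1$. The mean value theorem then bounds the difference quotient of the integrand pointwise by $|f|\,A_\chi$. Cauchy--Schwarz along each geodesic, followed by Santal\'o's formula, turns this into an $L^2(\partial_+SM)$ majorant of order $\|f\|_{L^2(M)}$. Dominated convergence then yields $H_2$-convergence of the difference quotient to the stated $\dot K_\theta f$.

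\textbf{Step 2: smoothness of $K_\theta^*\dot K_\theta f$ and the LFD.} Mimicking \eqref{eq: xray_adjoint} via Santal\'o's formula, the adjoint is
\[
K_\theta^*h(y)=\int_{S_yM}h(F(y,w))\,e^{i\theta A_\chi(y,w)}\,dS(w).
\]
So $K_\theta^*\dot K_\theta$ is a weighted normal operator $f\mapsto\int_{S_yM}\int(\ldots)$ with weight $-iA_\chi e^{-i\theta(A_\chi(\cdot)-A_\chi(y,w))}$. This weight is smooth on $SM\setminus\partial_0SM$.

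Since $f\in C^\infty(K)$ is supported in the interior, I would extend $(M,g)$ to a slightly larger simple manifold $M_1$ and extend $\chi$, and hence $A_\chi$, smoothly. On $M_1$ there are no glancing issues near $M$. The standard result (Stefanov--Uhlmann, Frigyik--Stefanov--Uhlmann) is that weighted normal operators with smooth weights on simple manifolds are pseudodifferential operators of order $-1$. Therefore $K_\theta^*\dot K_\theta f\in C^\infty(M)$.

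For the LFD, the closure of $H$ is $H_1$, so \eqref{eq: lfd} is equivalent to $K_\theta^*(\dot K_\theta f-K_\theta\gamma)=0$. This is solved by $\gamma=(K_\theta^*K_\theta)^{-1}K_\theta^*\dot K_\theta f$ once $K_\theta^*K_\theta$ is injective with $K_\theta^*\dot K_\theta f$ in its range. Injectivity comes from injectivity of the attenuated transform on simple manifolds (Salo--Uhlmann for $d=2$, and more generally for small or generic attenuations). The range condition I would take from the ellipticity of $K_\theta^*K_\theta$ as an order $-1$ $\Psi$DO together with the smoothness just established. The formula \eqref{eq: info} for the efficient information then follows as in Section 2.

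\textbf{Step 3: the disk case (main obstacle).} The hard part is to show $\gamma_{\theta,f}\in C^\infty(\overline{\DD})$ up to the boundary. Interior ellipticity only gives smoothness away from $\partial\DD$, and the normal operator does not preserve $C^\infty(\overline{\DD})$ in the naive sense. I would first try the boundary-adapted theory of Monard and Mishra--Monard. On the Euclidean disk, the normal operator of the attenuated X-ray transform composed with the weight $d^{-1/2}$ ($d$ the boundary distance) satisfies the $\mu$-transmission condition and is an isomorphism of $C^\infty(\overline{\DD})$ onto itself.

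Because $\chi$ is supported compactly inside $\DD$, near the boundary the operator $K_\theta^*K_\theta$ differs from the unattenuated one only by a smoothing term. So the transmission-property isomorphism should transfer to it, and applying it to the smooth function $K_\theta^*\dot K_\theta f$ gives $\gamma_{\theta,f}\in C^\infty(\DD)$. If the exact isomorphism is unavailable, the fallback is Fredholm theory in the H\"ormander transmission spaces plus the injectivity from Step 2.
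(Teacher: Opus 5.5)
Your Step~1 is the paper's computation, with an added dominated-convergence justification of the $H_2$-derivative. Your extension-plus-$\Psi$DO argument for $K_\theta^*\dot K_\theta f\in C^\infty(M)$ is a valid but heavier substitute for the paper's elementary one. There, the integrand of the back-projection vanishes on the open set $U_\delta$ of short geodesics, which contains $\partial_0SM$ and whose geodesics never meet $K$. It is smooth off $\overline{U_{\delta/2}}$ because $F$ is smooth away from $\partial_0SM$, and integrating over fibres preserves smoothness.

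The genuine gap is the final inference of Step~3. The transmission result you invoke says that $P:=K_\theta^*K_\theta\circ d^{-1/2}$ is an isomorphism of $C^\infty(\overline{\DD})$. Solving $K_\theta^*K_\theta\gamma=\psi$ with $\psi=K_\theta^*\dot K_\theta f$ therefore gives $\gamma=d^{-1/2}P^{-1}\psi\in d^{-1/2}C^\infty(\overline{\DD})$. That is exactly the general simple-manifold conclusion the paper records from Theorem~2.2 of \cite{monard2019}, and it is not $\gamma\in C^\infty(\overline{\DD})$. The factor $d^{-1/2}$ blows up at $\partial\DD$ unless $P^{-1}\psi$ vanishes there, and nothing in your argument forces that. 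Your Fredholm fallback in H\"ormander's transmission spaces has the same output.

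Nor is the factor an artefact of the method. $K_0(1-|x|^2)^{-1/2}\equiv\pi$ on every chord, so with the back-projection \eqref{eq: xray_adjoint} the operator $K_0^*K_0$ sends $(1-|x|^2)^{-1/2}$ to a constant. Your ``smoothing perturbation near $\partial\DD$'' heuristic therefore transports this boundary behaviour to $K_\theta^*K_\theta$ rather than removing it.

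What is missing is a disk-specific statement that $K_\theta^*K_\theta$ \emph{itself}, not $K_\theta^*K_\theta\circ d^{-1/2}$, is an isomorphism of $C^\infty(\DD)$. The paper imports precisely this from Theorem~6.4 of \cite{MonardNicklPternain2021}, for compactly supported skew-Hermitian attenuation $i\theta\chi$ on the disk. That result rests on Zernike--Sobolev mapping properties rather than on transmission calculus. Given the example above, you should check that it is stated for the same weighting of $L^2(\partial_+S\overline{\DD})$ as \eqref{eq: xray_adjoint}. The Zernike polynomials diagonalise the unattenuated normal operator for the unweighted measure $d\Sigma^2$, but not for $\mu\,d\Sigma^2$.

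A smaller gap of the same nature sits in Step~2. Interior ellipticity of $K_\theta^*K_\theta$ as an order $-1$ $\Psi$DO does not place $K_\theta^*\dot K_\theta f$ in its range on a manifold with boundary. Solvability of the normal equation is a boundary statement, which the paper takes from Theorem~2.2 of \cite{monard2019}. That theorem again only yields $\gamma_{\theta,f}\in d_M^{-1/2}C^\infty(M)$.
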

The proof of Lemma \ref{lemma: xray_info} can be found in Section \ref{sec: proofs}.


From now on we assume that $M=\DD$. This will be crucial in our analysis to verify the regularity statements of Assumption \ref{asspt1}, as we will leverage forward estimates in \cite{bohr_nickl} that do not have an analog for general simple manifolds at this time. They are based on the non-standard Zernike-Sobolev scale of Hilbert spaces on $\DD$ that we present in the next subsection.

\subsection{Regularity and Stability Assumptions}

For $k\in\mathbb{N}\cup\{0\}$ and $0\leq \ell\leq k$, the Zernike polynomial $Z_{k,\ell}$ is defined for a complex coordinate $z\in\DD$ as
\begin{align*}
	Z_{k,\ell}(z) = \frac{1}{\ell!}\partial_z^\ell \left(z^k\left(\frac{1}{z}-\overline z\right)^\ell\right).
\end{align*}
We denote its normalisation by $\hat Z_{k,\ell}:=Z_{k,\ell}/\|Z_{k,\ell}\|_{L^2(\DD)}$. The Zernike polynomials form a complete orthogonal system of $L^2(\DD)$, and appear in a singular value decomposition of the classical X-ray transform $K_0$ \cite{kazantsev}. For a function $f:\DD\to \mathbb{C}$, let $f_{k,\ell}:=\ip{f}{\hat Z_{k,\ell}}$ be the coefficients of $f$ in this Zernike basis. For $s\geq 0$, we define the Zernike-Sobolev space of order $s$ by
\begin{align}\label{eq: zernike_sobolev}
	\Tilde H^s(\DD)=\left\{ f\in L^2(\DD): \|f\|_{\Tilde H^s(\DD)}^2:=\sum_{k=0}^\infty\sum_{\ell=0}^k (1+k)^{2s}|f_{k,\ell}|^2 <\infty \right\}.
\end{align}
Definition \eqref{eq: zernike_sobolev} can be extended to $s<0$ by duality, with $\Tilde H^{-s}(\DD)=(\Tilde H^s(\DD))^*$ (see section 6.4 of \cite{MonardNicklPternain2021} for details). We further define a scale on $S\DD$ that captures the usual Sobolev regularity in the ``direction" variable and the ``Zernike-regularity" in the space variable:
\begin{align*}
	\Tilde H^s(S\DD) = \left\{u\in L^2(S\DD): \|u\|^2_{\Tilde H^s(S\DD)}:=\sum_{\ell\in\mathbb{Z}}(1+\ell^2)^s\|u_\ell\|_{L^2(\DD)}^2 + \|u_\ell\|_{\Tilde H^s(\DD)}^2<\infty\right\},
\end{align*}
with $u_\ell:\DD\to\mathbb{C}$ defined as the $\ell^{th}$ vertical Fourier mode,
\[u_\ell(x)=\int_0^{2\pi}e^{-i\ell t} u(x, (\cos{t}, \sin{t}))dt.\]
Note that if $f\in \Tilde H^s(\DD)$, we trivially have that $f_\ell =0$ whenever $\ell\neq 0$, and so it follows that  $\|f\|_{\Tilde H^s(S\DD)} = 2\pi\|f\|_{\Tilde H^s(\DD)}$. 
 
The following proposition deals with the regularity statements in Assumption \ref{asspt1}.

\begin{prop}\label{prop: reg_xray}
	Let $H_1= L^2(\DD), H_2 = L^2(\partial_+S\overline\DD)$ and $H = \Tilde H^{\beta}(\DD)$ for $\beta>1$. We have that \eqref{eq: finiteEntropy}  is verified for $S=H^\beta(\partial_+S\overline \DD)$ (the standard Sobolev space of order $\beta$ for functions defined on $\partial_+S\overline\DD)$and that items (i)-(iv) of Assumption \ref{asspt1} are all verified. 
\end{prop}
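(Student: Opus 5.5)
The approach rests on two ingredients. The first is the representation \eqref{eq: K_theta_xray}--\eqref{eq: dev_K_theta_xray} from Lemma \ref{lemma: xray_info}. The second is the forward estimates of \cite{bohr_nickl} (see also \cite{MonardNicklPternain2021}) for attenuated X-ray transforms on $\DD$. These say that for a smooth, compactly supported (possibly complex, possibly velocity dependent) weight $w\in C^\infty(S\overline\DD)$, the weighted transform $I_w u(x,v)=\int_0^{\tau(x,v)}w(\p_t(x,v))u(\pi(\p_t(x,v)))dt$ maps $\Tilde H^s(\DD)$ boundedly into $H^s(\partial_+S\overline\DD)$, with operator norm controlled by finitely many $C^k$-seminorms of $w$.

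First, \eqref{eq: finiteEntropy}. The space $\partial_+S\overline\DD$ is a compact two-dimensional manifold with boundary. The unit ball of $H^\beta$ therefore has $L^2$-metric entropy of order $\eps^{-2/\beta}$, by standard estimates via extension and charts. The entropy integral $\int_0^1\eps^{-1/\beta}d\eps$ is then finite exactly when $\beta>1$, which is the stated condition.

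Item (i) uses only the $L^2$ theory. $A_\chi$ is bounded on $S\DD$ because $M$ is nontrapping and $\chi$ is bounded, and $|e^{-i\theta A_\chi}|=1$. So $|K_\theta f|\le K_0|f|$ and $|\dot K_\theta f|\le \|A_\chi\|_\infty K_0|f|$ pointwise, and the $L^2(\DD)\to L^2(\partial_+S\overline\DD)$ boundedness of $K_0$ \cite{Sharafutdinov} gives (i) uniformly in $\theta\in\Theta$.

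For (ii)--(iv), Taylor expansion in $\theta$ inside the integral gives the pointwise identities
\[(K_\theta-K_{\theta_0})f=I_{w_1}f,\qquad (K_\theta-K_{\theta_0}-(\theta-\theta_0)\dot K_{\theta_0})f=I_{w_2}f,\]
where
\[w_1=(\theta-\theta_0)\int_0^1(-iA_\chi)e^{-i(\theta_0+u(\theta-\theta_0))A_\chi}du,\]
and $w_2$ is $(\theta-\theta_0)^2$ times the corresponding second-order integral remainder with integrand $-A_\chi^2e^{-i\cdot A_\chi}$. Similarly, $\dot K_{\theta_0}(f-f_0)=I_{w_3}(f-f_0)$ with $w_3=-iA_\chi e^{-i\theta_0A_\chi}$. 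Once the $C^k$-norms of $w_j/|\theta-\theta_0|^{j}$ (and of $w_3$) are bounded uniformly in $\theta,\theta_0\in\Theta$, the forward estimate with $s=\beta$ gives (ii), (iii) and (iv) with the $\Tilde H^\beta(\DD)$-norm on the right. Those $C^k$ bounds hold because $\Theta$ is compact and $e^{-i\vartheta A_\chi}$ is smooth jointly in $(\vartheta,x,v)$, provided $A_\chi$ is smooth.

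The main obstacle is justifying that smoothness of $A_\chi$, and that the weights have the structure required by \cite{bohr_nickl}. $A_\chi(y,w)$ is the integral of $\chi$ along the geodesic from the footpoint $F(y,w)$ up to $(y,w)$. The footpoint map $F$ and $\tau$ are smooth only off the glancing set $\partial_0S\DD$. However, $\chi$ is compactly supported in the interior of $\DD$, so near the glancing set the geodesic either misses $\mathrm{supp}\,\chi$ (where $A_\chi$ is locally constant, equal to $0$) or has not yet reached it. I would therefore show that $A_\chi$ solves the transport equation $XA_\chi=\chi$ with $A_\chi|_{\partial_+S\DD}=0$, and extend $\DD$ slightly to a larger simple disk. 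On that disk, $A_\chi$ is the restriction of a function smooth on the whole extended unit sphere bundle, which is the regularity the forward estimates need.

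If \cite{bohr_nickl} only covers weights of the form $e^{-\int a}$ with $a$ a scalar function, I would instead write $e^{-i\vartheta A_\chi}$ as exactly such an attenuation, with $a=i\vartheta\chi$. The factor $A_\chi$ and its powers would then be handled by differentiating that family in the parameter $\vartheta$, using uniform-in-$\vartheta$ bounds, or by a direct commutation argument with the generator $X$. This is the step most likely to require care.
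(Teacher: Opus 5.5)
Your proposal is correct and is essentially the paper's argument. With the integrating factor $R_\theta=e^{i\theta A_\chi}$, which equals $1$ on $\partial_+S\overline\DD$, the paper's representation $K_\theta f=R_\theta\KK_0(R_\theta^{-1}f)$ is exactly your weighted transform. Your weighted forward estimate is precisely the combination the paper uses from \cite{bohr_nickl}: the $\KK_0$ forward estimate on $\Tilde H^s(S\DD)$ together with the $C^k$-multiplier bound. So your fallback paragraph is unnecessary.

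The remaining differences are cosmetic. You place the Taylor remainder in $\theta$ inside the weight, whereas the paper integrates uniform $H^\beta$-bounds on $\dot K_\eta f$ and $\ddot K_\eta f$ over $\eta$. You obtain (i) by pointwise domination by $K_0|f|$, whereas the paper uses the $s=0$ forward estimate.
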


\begin{proof}
	The influx boundary $\partial_+S\overline\DD$ is two-dimensional. It follows that the $L^2$-metric entropy in of the unit ball in $H^s(\partial_+S\overline\DD)$ is of order $\eps^{-d/s}$. Therefore, the entropy integral on the left hand side of \eqref{eq: finiteEntropy} is finite for $s>1$. This proves that $S=H^\beta(\partial_+S\overline\DD)$ satisfies \eqref{eq: finiteEntropy} by the assumption that $\beta>1$.
	
	To prove the statements of Assumption \ref{asspt1}, we will consider an alternative representation for $K_\theta$. Let $\KK_0$ be the natural extension of the classical X-ray transform $K_0$ \eqref{eq: xray} to functions on $S\DD$. Namely, for $f:S\DD\to\mathbb{C}$,
	\[\KK_0f(x,v) := \int_0^{\tau(x,v)} f(\p_t(x,v))dt.\]
	The attenuated X-ray transform $K_\theta$ then has the alternative representation
	\begin{align}\label{eq: x_ray_alt}
		K_\theta f = R_\theta \KK_0(R_\theta^{-1}f),\;\;\; \text{on }\partial_+S\overline\DD,
	\end{align}
	where $R_\theta:S\overline\DD\to \mathbb{C}$ is an integrating factor for $\theta\chi$, i.e. the solution to the transport equation 
	\[ (X+\theta\chi)R_\theta=0\;\;\; \text{on }S\DD,\]
	with $X=\nu\cdot\nabla_x$ the geodesic vector field. Since $\chi\in C^\infty(\overline \DD)$, we have that $R_\theta\in C^\infty(S\overline\DD)$ (see Remark 4.3 in \cite{bohr_nickl}). Observe that the representation \eqref{eq: x_ray_alt} is also valid for functions $f:S\overline\DD\to\mathbb{C}$. Abusing notation, we obtain from \eqref{eq: K_theta_xray} and \eqref{eq: dev_K_theta_xray}, that $\dot K_\theta f = K_\theta(-ifA_\chi)$. We can therefore write
	\begin{align}\label{eq: alt_dev_xray}
		\dot K_\theta f = -iR_\theta \KK_0(R_\theta^{-1}fA_\chi) \;\;\; \text{on }\partial_+S\overline\DD.
	\end{align}
	
	We now proceed with the proof of Assumption \ref{asspt1}. For the first part of item (i) it suffices to apply Theorem 4.2 in \cite{bohr_nickl} with $s=0$ and $\Phi=\theta\chi\in C^\infty(\overline\DD)$, but we will give a more detailed derivation as it will be useful for proving the other items. Firstly, by Lemma 4.6 in \cite{bohr_nickl}, we have the forward estimate
	\begin{align}\label{eq: x_ray_forward}
		\|\KK_0 u\|_{H^s(\partial_+S\overline\DD)}\lesssim_s \|u\|_{\Tilde H^s(S\DD)},
	\end{align}
	for $s\geq 0$ and $u\in \Tilde H^s(S\DD)$ (with $\lesssim_s$ meaning that the constant depends on $s$). Besides, for $k=2\lceil s/2 \rceil$, we have by Lemma 4.7 in \cite{bohr_nickl} that multiplication is continuous in $C^k(\overline\DD)\times \Tilde H^s (\DD)\to \Tilde H^s(\DD)$ and in $C^k(S\overline\DD)\times \Tilde H^s(S\DD)\to \Tilde H^s(S\DD)$. 
	Since $R_\theta$ and $R_\theta^{-1}$ are both in $C^\infty(S\overline\DD)$, \eqref{eq: x_ray_alt} therefore implies that for $f\in H^s(\DD)$, 
	\begin{align*}
		\|K_\theta f\|_{H^s(\partial_+S\overline\DD)}&\leq \|R_\theta\|_{C^k(\partial_+S\overline\DD)}\cdot \|\KK_0(R_\theta^{-1}f)\|_{H^s(\partial_+S\overline\DD)}\\
		&\lesssim \|R_\theta\|_{C^k(\partial_+S\overline\DD)}\cdot \|R_\theta^{-1}f\|_{\Tilde H^s(S\DD)}\\
		&\leq \|R_\theta\|_{C^k(\partial_+S\overline\DD)}\cdot \|R_\theta^{-1}\|_{C^k(S\overline\DD)} \cdot \|f\|_{\Tilde H^s(S\DD)}\\
		&\leq 2\pi \|R_\theta\|_{C^k(\partial_+S\overline\DD)}\cdot\|R_\theta^{-1}\|_{C^k(S\overline\DD)} \cdot \|f\|_{\Tilde H^s(\DD)}
	\end{align*}
	Now by (4.15) of \cite{bohr_nickl}, we have the estimates 
	\begin{align*}
		\|R_\theta^{\pm 1}\|_{C^k(\partial_+S\overline\DD)}\leq \|R_\theta^{\pm 1}\|_{C^k(S\overline\DD)}\lesssim (1+\|\theta \chi\|_{C^k(\overline\DD)})^k.
	\end{align*}
	Since $\theta\in\Theta$ compact, there exists $\theta^* = \sup_{\theta\in\Theta}|\theta|$. We thus have that the right hand side of the last display is bounded above by $(1+\theta^*\|\chi\|_{C^k(\overline\DD)})^k$ which upon selecting $s=0$ concludes the proof of the first part of item (i). For the second part of item (i), the exact same reasoning using \eqref{eq: alt_dev_xray} instead of \eqref{eq: x_ray_alt} yields
	\[ \|\dot K_\theta f\|_{H^s(\partial_+S\overline\DD)}\lesssim \|R_\theta\|_{C^k(\partial_+S\overline\DD)}\cdot\|R_\theta^{-1}\|_{C^k(S\overline\DD)}\cdot\|A_\chi f\|_{\Tilde H^s(S\DD)}.\] 
	Since $\chi$ is compactly supported in the interior of $\DD$, we have that $A_\chi \in C^\infty(S\overline\DD)$ (see proof of Lemma \ref{lemma: xray_info} for details). We thus have that $\|A_\chi f\|_{\Tilde H^s(S\DD)}\leq\|A_\chi\|_{C^k(S\overline\DD)}\|f\|_{\Tilde H^s(\DD)}$ and we obtain the second part of item (i) by selecting $s=0$ just as before. We also note that we have just proven the stronger statements for any $s\geq 0$ and appropriate constants $C_s$:
	\begin{align}
		\|K_\theta f \|_{H^s(\partial_+S\overline\DD)} &\leq C_s \|f\|_{\Tilde H^s(\DD)},\label{eq: forward_est_xray}\\
		\|\dot K_\theta f \|_{H^s(\partial_+S\overline\DD)} &\leq C_s \|f\|_{\Tilde H^s(\DD)},\label{eq: forward_est_xray_dev}
	\end{align} 
	
	For proving item (ii), we note that by the fundamental theorem of calculus,
	\[(K_\theta-K_{\theta_0})f = \int_{\theta_0}^\theta \dot K_\eta f d\eta.\]
	Therefore by \eqref{eq: forward_est_xray_dev},
	\begin{align*}
		\|(K_\theta-K_{\theta_0})f\|_S &= \|(K_\theta-K_{\theta_0})f\|_{H^\beta(\partial_+S\overline\DD)}\\
		&\leq \int_{\theta_0}^\theta \|\dot K_\eta f\|_{H^\beta(\partial_+S\overline\DD)}d\eta\\
		&\leq C_\beta(\theta-\theta_0)\|f\|_{\Tilde H^\beta(\DD)}\\
		&= C_\beta (\theta-\theta_0)\|f\|_{H}.
	\end{align*}
	
	For item (iii), the integral form of Taylor's remainder theorem yields
	\[(K_\theta - K_{\theta_0} - (\theta-\theta_0)\dot K_{\theta_0})f = \int_{\theta_0}^\theta (\theta - \eta) \ddot K_\eta f d\eta. \]
	Similarly as \eqref{eq: alt_dev_xray}, we can derive the following alternative representation for $\ddot K_\theta f$:
	\[\ddot K_\theta f = -R_\theta\KK_0(R_\theta^{-1}fA_\chi^2).\]
	Noting that $A_\chi^2$ is also in $C^\infty(S\overline\DD)$, we can obtain the following analog estimates to \eqref{eq: forward_est_xray} and \eqref{eq: forward_est_xray_dev}:
	\begin{align}
		\|\ddot K_\theta f \|_{H^s(\partial_+S\overline\DD)} &\leq C_s \|f\|_{\Tilde H^s(\DD)}.\label{eq: forward_est_xray_ddev}
	\end{align}
	The proof of item (iii) is concluded through an application of \eqref{eq: forward_est_xray_ddev}:
	\begin{align*}
		\|(K_\theta - K_{\theta_0} -(\theta-\theta_0)\dot K_{\theta_0})f\|_S &= \|(K_\theta - K_{\theta_0} -(\theta-\theta_0)\dot K_{\theta_0})f\|_{H^\beta(\partial_+S\overline\DD)}\\
		&\leq \int_{\theta_0}^\theta (\theta-\eta)\|\ddot K_\eta f\|_{H^\beta(\partial_+S\overline\DD)}d\eta\\
		&\leq C_\beta (\theta-\theta_0)^2\|f\|_{\Tilde H^\beta(\DD)}\\
		&=  C_\beta (\theta-\theta_0)^2\|f\|_{H}.\\
	\end{align*}
	
	Finally as already mentioned, item (iv) is simply a consequence of items (ii) and (iii).
\end{proof}

To verify Assumption \ref{asspt:stab}, we will need $(\theta_0, f_0)$ to satisfy the following condition.
\begin{condition}\label{condition}
	A pair of parameters $(\theta,f)$ satisfies Condition \ref{condition} if 
	
	(i) $\forall g\in C^\infty(\DD), \ \dot K_\theta f \neq K_\theta g$,
	
	(ii) $\forall (\theta', f')\in \Theta\times H, \ K_\theta f = K_{\theta'}f'\implies (\theta, f)=(\theta', f')$.
\end{condition}
The first part of Condition \ref{condition} will ensure that our model has positive information in view of Lemma \ref{lemma: xray_info}. Reducing it to an assumption on only $(\theta,f)$ can be done by looking into properties of the range of $K_\theta$. Indeed, as seen in the proof of Proposition \ref{prop: reg_xray}, $\dot K_\theta f$ can be written as $K_\theta(-ifA_\chi)$ when considering the natural extension of $K_\theta$ to functions in $C^\infty(SM)$. We are thus requiring that $(\theta,f)$ is chosen so that $K_\theta (-ifA_\chi)$ is not in the range $K_\theta C^\infty(\DD)$. The characterisation of this range is (among other results) given in \cite{assylbekov18}, and can in principle be used to derive when exactly it can be asserted that $\dot K_\theta f$ is not in it. This is quite involved for the context of this paper so we do not pursue it here. Nevertheless, we emphasize that the ``full range" $K_\theta C^\infty(SM)$ is much larger than $K_\theta C^\infty(\DD)$, so it is expected that there exists a nontrivial amount of generic candidate parameter pairs $(\theta, f)$ that will satisfy this part of the condition.

The second part of Condition \ref{condition} is a statement about (global) identifiability of the specific pair $(\theta,f)$. This is often referred to as the identification problem in SPECT. For the closely related problem of identification of the exponential Radon transform, it is well known that identifying the pair $(\theta, f)$ is possible if and only if $f$ is non-radial (\cite{solmon95} or see Theorem 11.12 in the survey \cite{Kuchment}). Other papers deal with this problem with even more general attenuations at times (\citep{Bal,stefanov, Bukhgeim11}) and corroborate the idea that identifying $(\theta,f)$ is possible in a wide variety of scenarios.

The next proposition deals with Assumption \ref{asspt:stab}. 
\begin{prop}\label{prop: stab_xray}
	Assume that $(\theta_0,f_0)$ satisfies Condition \ref{condition}. Let $T=H_1=L^2(\DD)$ and $H_2=L^2(\partial_+S\overline\DD)$. Let also $\R = H = \Tilde H^\beta(\DD)$ for $\beta>1$. Then Assumption \ref{asspt:stab} holds with $\eta=(\beta-1)/\beta$.
\end{prop}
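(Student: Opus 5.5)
Item (iii) is immediate. With $T=H_1=L^2(\DD)$, the first part of item (i) of Assumption \ref{asspt1} (established in Proposition \ref{prop: reg_xray}, uniformly over the compact $\Theta$) gives $\sup_{\theta\in\Theta}\|K_\theta f\|_{H_2}\le D_1\|f\|_{L^2(\DD)}$. The real work is in items (i) and (ii), which I would treat separately.

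\textbf{Item (ii).} I would use an $L^2$ stability estimate for the fixed attenuated transform $K_{\theta_0}$ and then interpolate.

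First, item (i) of Assumption \ref{asspt1} gives $\|\dot K_{\theta_0}h\|_{H_2}\le D_1\|h\|_{L^2(\DD)}$ for $h=f-f_0$.

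Second, I would invoke the known stability estimate for attenuated X-ray transforms with smooth attenuation on simple surfaces (injectivity due to Salo--Uhlmann, quantitative form as in \cite{MonardNicklPternain2021, bohr_nickl, assylbekov18}). In the form I intend to use, it reads $\|h\|_{L^2(\DD)}\lesssim \|K_{\theta_0}h\|_{H^1(\partial_+S\overline\DD)}$, obtained for instance via $\|h\|_{L^2}\lesssim\|K_{\theta_0}^*K_{\theta_0}h\|_{H^1}$ and boundedness of the back projection.

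Third, I would interpolate the Sobolev scale on $\partial_+S\overline\DD$:
\[\|K_{\theta_0}h\|_{H^1}\le \|K_{\theta_0}h\|_{L^2}^{1-1/\beta}\,\|K_{\theta_0}h\|_{H^\beta}^{1/\beta}.\]
By the forward estimate \eqref{eq: forward_est_xray}, $\|K_{\theta_0}h\|_{H^\beta}\le C_\beta\|h\|_{\Tilde H^\beta(\DD)}\le C_\beta(M+\|f_0\|_{\R})$ on $B_\R(M)$. Combining the three steps gives $\|\dot K_{\theta_0}h\|_{H_2}\le C_2\delta^{(\beta-1)/\beta}$ whenever $\|K_{\theta_0}h\|_{H_2}<\delta$, which is item (ii) with $\eta=(\beta-1)/\beta$ (valid since $\beta>1$).

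\textbf{Item (i).} I would proceed in two steps: a compactness step giving consistency, then a linearisation step giving the linear bound.

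For the compactness step, take $(\theta_n,f_n)\in\Theta\times B_\R(M)$ with $\|K_{\theta_n}f_n-K_{\theta_0}f_0\|_{H_2}\to0$. Since $\Theta$ is compact and $\Tilde H^\beta(\DD)$-balls are relatively compact in $L^2(\DD)$, a subsequence satisfies $\theta_n\to\theta$ and $f_n\to f$ in $L^2$, with $f\in \overline{B_\R(M)}\subset H$. By item (i) of Assumption \ref{asspt1} and the Lipschitz bound \eqref{eq:Lipschitz}, $K_{\theta_n}f_n\to K_\theta f$, so $K_\theta f=K_{\theta_0}f_0$. Condition \ref{condition}(ii) then forces $(\theta,f)=(\theta_0,f_0)$. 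Hence, for $\delta$ small, $\|K_\theta f-K_{\theta_0}f_0\|<\delta$ implies that $|\theta-\theta_0|$ and $\|f-f_0\|_{L^2}$ are uniformly small.

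For the linearisation step, write $t=\theta-\theta_0$ and $h=f-f_0$, and decompose
\[K_\theta f-K_{\theta_0}f_0=K_{\theta_0}h+t\dot K_{\theta_0}f_0+r,\qquad r=(K_\theta-K_{\theta_0})h+(K_\theta-K_{\theta_0}-t\dot K_{\theta_0})f_0.\]
Using items (ii)--(iii) of Assumption \ref{asspt1} (their $s=0$ versions), $\|r\|_{H_2}\lesssim |t|\,\|h\|_{L^2}+t^2=o(|t|)$ by the compactness step.

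For the main term, let $P^\perp$ denote the projection onto the orthocomplement of $\overline{K_{\theta_0}H}$. By Lemma \ref{lemma: xray_info}, on the disk the LFD $\gamma_{\theta_0,f_0}$ exists and is in $C^\infty(\DD)$, and $\|P^\perp\dot K_{\theta_0}f_0\|^2=\tilde i_{\theta_0,f_0}$. This quantity is positive, since otherwise $\dot K_{\theta_0}f_0=K_{\theta_0}\gamma_{\theta_0,f_0}$ with a smooth $\gamma_{\theta_0,f_0}$, contradicting Condition \ref{condition}(i). Consequently
\[\|K_{\theta_0}h+t\dot K_{\theta_0}f_0\|\ge|t|\sqrt{\tilde i_{\theta_0,f_0}},\]
and also $\|K_{\theta_0}h\|\le\|K_{\theta_0}h+t\dot K_{\theta_0}f_0\|+D_1\|f_0\|\,|t|$. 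Together these give $|t|+\|K_{\theta_0}h\|\le c\,\|K_{\theta_0}h+t\dot K_{\theta_0}f_0\|$. Absorbing the $o(|t|)$ remainder for $\delta$ small then yields $C_1$.

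\textbf{Main obstacle.} I expect the main difficulty to be the stability estimate in item (ii): identifying precisely which norm of $K_{\theta_0}h$ controls $\|h\|_{L^2}$ for the imaginary constant attenuation $i\theta_0\chi$. The exponent $\eta=(\beta-1)/\beta$ indicates that $H^1$ is the right order. If only a $\Tilde H^{-1/2}$-type estimate is available, I would instead interpolate $\|h\|_{L^2}$ between $\Tilde H^{-1/2}$ and $\Tilde H^\beta$ and adjust the exponent accordingly.
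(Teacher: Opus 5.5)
Your overall argument is sound, and for items (i) and (iii) it is essentially the paper's. Item (iii) is the $s=0$ forward estimate. Item (i) uses the same two ingredients:
\begin{itemize}
\item identifiability from Condition \ref{condition}(ii), combined with compactness of $\Tilde H^\beta(\DD)$-balls in $L^2(\DD)$;
\item the linearisation $K_\theta f-K_{\theta_0}f_0=K_{\theta_0}h+t\dot K_{\theta_0}f_0+r$ with the projection bound $|t|\,\tilde i_{\theta_0,f_0}^{1/2}$, where positivity of $\tilde i_{\theta_0,f_0}$ comes from Lemma \ref{lemma: xray_info} and Condition \ref{condition}(i).
\end{itemize}
You run these in the opposite order: compactness first to localise, then linearise. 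This is slightly cleaner than the paper's local bound followed by a contradiction argument, since item (i) is only needed for small $\delta$. You also check that the limit lies in $H$, which the paper leaves implicit.

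Item (ii) is where the routes genuinely differ. The paper stays on the parameter side. It quotes the estimate $\|h\|_{\Tilde H^{-1/2}(\DD)}\lesssim\|K_{\theta_0}h\|_{L^2}$ from \cite{bohr_nickl} and interpolates $\|h\|_{L^2}$ between $\Tilde H^{-1/2}$ and $\Tilde H^\beta$ in the Zernike scale. Its H\"older step is in fact lossy: sharp interpolation gives the exponent $2\beta/(2\beta+1)>(\beta-1)/\beta$, so the stated $\eta$ holds with room to spare. You work on the data side instead. You combine an $L^2\to H^1$ stability estimate of Monard--Nickl--Paternain type (available on general simple surfaces), interpolation of $H^1$ between $L^2$ and $H^\beta$ on $\partial_+S\overline\DD$, and the disk-specific forward estimate \eqref{eq: forward_est_xray}. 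This explains exactly where $(\beta-1)/\beta$ comes from, but it leaves no slack in the exponent.

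That lack of slack makes one technical point matter in your route. $H_2$ is $L^2$ with respect to $\mu\,d\Sigma$, and $\mu$ vanishes on the glancing set $\partial_0S\overline\DD$. The zeroth level of the standard Sobolev scale on $\partial_+S\overline\DD$ is the unweighted $L^2$, which is a strictly stronger norm. So your interpolation inequality controls $\|K_{\theta_0}h\|_{H^1}$ by the unweighted $L^2$ norm of $K_{\theta_0}h$. The hypothesis $\|K_{\theta_0}h\|_{H_2}<\delta$ does not bound that norm at the same rate.

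Indeed, the mixed inequality $\|u\|_{H^1}\lesssim\|u\|_{H_2}^{1-1/\beta}\|u\|_{H^\beta}^{1/\beta}$ fails for bumps of width $\lambda\to0$ centred at a glancing point. There the left side is of order $1$ and the right side of order $\lambda^{(\beta-1)/(2\beta)}$.

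To close item (ii) with exactly $\eta=(\beta-1)/\beta$, you need one of two things:
\begin{itemize}
\item the stability estimate and the interpolation formulated in a scale whose base space is $H_2$ itself;
\item an additional argument controlling $K_{\theta_0}h$ near the glancing set.
\end{itemize}
In the paper's route, by contrast, the data norm enters only through the single $L^2$-level estimate it quotes, and its interpolation step has slack.
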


\begin{proof}
	The proof of item (iii) is immediate from \eqref{eq: forward_est_xray} applied with $s=0$. 
	
	To prove item (i) we will first prove that there exists $\mu>0$ and $C>0$ such that for $(\theta,f)\in B_{\R}(M)$,
	\begin{align}
		|\theta-\theta_0| &+ \|f-f_0\|_{L^2(\DD)} <\mu \nonumber  \\ \implies \|K_\theta f - K_{\theta_0}f_0\|_{L^2(\partial_+S\overline\DD)} &\geq C \left(|\theta-\theta_0| + \|K_{\theta_0}(f-f_0)\|_{L^2(\partial_+S\overline\DD)}\right). \label{eq: neighborhood_xray}
	\end{align}
	Assuming \eqref{eq: neighborhood_xray} to be true, we reason by contradiction to prove the lower bound on $\|K_\theta f- K_{\theta_0}f_0\|_{L^2(\partial_+S\overline\DD)}$ everywhere on $\Theta\times B_\R(M)$. By contradiction, there must exists a sequence $(\theta_m, f_m)\subset \Theta\times B_\R(M)$ outside a $\mu$-neighborhood of $(\theta_0, f_0)$ for which we have the convergence
	\[\frac{\|K_{\theta_m}f_m - K_{\theta_0}f_0\|}{|\theta-\theta_0| + \|f_m-f_0\|}\to 0.\]
	Note that the denominator is bounded because $\Theta$ is compact and because $\|f_m - f_0\|_{L^2(\D)} \lesssim \|f_m - f_0\|_{\Tilde H^\beta(\D)}\lesssim C(M)$. It follows that $K_{\theta_m}f_m \to K_{\theta_0} f_0$ in $L^2$. By compactness of $\Theta$, we can assume that $\theta_m\to \theta_1\in\Theta$. Furthermore, by item (ii) of Assumption \ref{asspt1}, we have
	\begin{align*} 
		\|K_{\theta_1}f_m - K_{\theta_0}f_0\| &\leq \|(K_{\theta_1} - K_{\theta_m})f_m\| + \|K_{\theta_m}f_m-K_{\theta_0}f_0\| \\
		&\lesssim |\theta_1-\theta_m| M + \|K_{\theta_m}f_m-K_{\theta_0}f_0\|\\
		&\to 0.
	\end{align*}
	Hence, $K_{\theta_1}f_m\to K_{\theta_0}f_0$ in $L^2$. Besides, by Lemma 6.14 (b) in \cite{MonardNicklPternain2021}, $\Tilde H^\beta(\DD)$ is compactly embedded in $L^2(\DD)$ so that there exists a sub-sequence $f_{m_k}$ converging strongly to a limit $f_1$ in $L^2$. Boundedness of $K_{\theta_1}$ (c.f item (i) of Assumption \ref{asspt1}) implies that $K_{\theta_1}f_{m_k}\to K_{\theta_1}f_1$ in $L^2$, and thus that $K_{\theta_1}f_1 = K_{\theta_0}f_0$. Item (ii) of Condition \ref{condition} then implies that $(\theta_1, f_1)=(\theta_0, f_0)$ which yields a contradiction.
	
	We now prove \eqref{eq: neighborhood_xray}. Assume then that $|\theta-\theta_0|+\|f-f_0\| < \mu$, for $\mu$ to be determined later. By Lemma \ref{lemma: xray_info}, we know that $\gamma_{\theta_0, f_0} \in C^\infty(\DD)$. Item (i) of Condition \ref{condition} then implies that $K_{\theta_0}\gamma_{\theta_0, f_0}\neq \dot K_{\theta_0}f_0$. In view of \eqref{eq: info}, this means that $\tilde i_{\theta_0,f_0}>0$. We now write the difference $K_{\theta}f-K_{\theta_0}f_0$ as
	\[K_\theta f - K_{\theta_0}f_0 = (\theta-\theta_0)\dot K_{\theta_0}f_0 + K_{\theta_0}(f-f_0) + R(\theta,f),\]
	with \[R(\theta,f) = (K_\theta -K_{\theta_0} - (\theta-\theta_0)\dot K_{\theta_0})f_0 + (K_\theta - K_{\theta_0})(f-f_0).\]
	It follows from items (ii) and (iii) of Assumption \ref{asspt1} that for some $D>0$, 
	\[\|R(\theta,f)\|_{L^2(\partial_+S\overline\DD)} \leq D(|\theta-\theta_0|^2 + |\theta-\theta_0|\cdot \|f-f_0\|_{L^2(\DD)}).\]
	We obtain that
	\begin{align*}
		&\|K_\theta f - K_{\theta_0}f_0\|_{L^2(\partial_+S\overline\DD)}\\
		&\geq \|(\theta-\theta_0)\dot K_{\theta_0}f_0 + K_{\theta_0}(f-f_0)\|_{L^2(\partial_+S\overline\DD)} - D(|\theta-\theta_0|^2 + |\theta-\theta_0|\cdot \|f-f_0\|_{L^2(\DD)})\\
		&=|\theta-\theta_0|\left(\left\|\dot K_{\theta_0} f_0 + K_{\theta_0}\left[\frac{f-f_0}{\theta-\theta_0}\right]\right\|_{L^2(\partial_+S\overline\DD)} - D\left[|\theta-\theta_0| +\|f-f_0\|_{L^2(\DD)}\right]\right)\\
		&> |\theta-\theta_0|\left(\tilde i_{\theta_0,f_0}^{1/2} - D\left[|\theta-\theta_0| +\|f-f_0\|_{L^2(\DD)}\right]\right),
	\end{align*}
	where the last inequality stems from the definition of the efficient information $\tilde i_{\theta_0,f_0}$. Let then $C_1 = \tilde i^{1/2}_{\theta_0,f_0}/2$. We therefore have that for $\mu < \tilde i^{1/2} _{\theta_0,f_0}/(2D)$, 
	\begin{align}\label{eq: stab_xray_1}
		\||K_\theta f - K_{\theta_0}f_0\|_{L^2(\partial_+S\overline\DD)}\geq C_1 |\theta-\theta_0|.
	\end{align}
	 Furthermore, by item (ii) of Assumption \ref{asspt1}, we can derive the following inequality
	\begin{align*}
		\|K_\theta f - K_{\theta_0}f_0\|_{L^2(\partial_+S\overline\DD)} \geq \|K_{\theta_0}(f-f_0)\|_{L^2(\partial_+S\overline\DD)} - \tilde D_2M|\theta-\theta_0|,
	\end{align*}
	for an appropriate constant $\tilde D_2$. For simplicity, assume that $\tilde D_2M>1$. Letting $C_2 = (\tilde D_2M)^{-1}$ then yields
	\begin{align}\label{eq: stab_xray_2}
		\|K_{\theta}f-K_{\theta_0}f_0\|_{L^2(\partial_+S\overline\DD)} \geq C_2\|K_{\theta_0}(f-f_0)\|_{L^2(\partial_+S\overline\DD)} - |\theta-\theta_0|.
	\end{align}
	We now select an integer $N$ such that $N-1>2C_1^{-1}$. Applying \eqref{eq: stab_xray_1} $N-1$ times, and \eqref{eq: stab_xray_2} once gives us
	\begin{align*}
		N\cdot \|K_{\theta}f-K_{\theta_0}f_0\| &\geq C_2\|K_{\theta_0}(f-f_0)\| + ((N-1)C_1 - 1)|\theta-\theta_0|\\
		&\geq  C_2\left(\|K_{\theta_0}(f-f_0)\| + |\theta-\theta_0|\right).
	\end{align*}
	Dividing by $N$ both sides of the inequality in the last display therefore yields \eqref{eq: neighborhood_xray} with $C=C_2N^{-1}$.
	
	We finally prove item (ii). Let $h=f-f_0$. The forward estimate \eqref{eq: forward_est_xray_dev} applied for $s=0$ yields
	\[\|\dot K_{\theta_0}h\|^2_{L^2(\partial_+S\overline\DD)}\lesssim \|h\|_{L^2(\DD)} \approx \|h\|_{\tilde H^0(\DD)}.\]
	Let then $t=\beta/(\beta-1)$. From the definition of the Zernike-Sobolev norms, we can use Hölder's inequality to obtain that
	\[\|h\|^2_{\Tilde H^0(\DD)}\lesssim \|h\|^{2/t}_{\Tilde H^{-1/2}(\DD)}\cdot\|h\|^{2(t-1)/t}_{\Tilde H^\beta(\DD)}\]
	The second factor on the right hand side of the above display is bounded by a constant depending on $M$, while for the first factor, we have the stability estimate $\|h\|_{\tilde H^{-1/2}} \leq C(\theta_0\chi, \text{supp}(\chi))\|K_{\theta_0}h\|_{L^2(\partial_+S\overline\DD)}$, by Theorem 4.1 in \cite{bohr_nickl}. Note that the estimate is more generally stated to hold for a smooth enough compactly supported skew-hermitian matrix valued attenuation $\Phi$. Taking $\Phi = i\theta\chi$ in our case certainly verifies these conditions. It follows that for $\eta = 1/t = (\beta-1)/\beta$, we have
	\[\|\dot K_{\theta_0}(f-f_0)\|_{L^2(\partial_+S\overline\DD)}\lesssim \|K_{\theta_0}(f-f_0)\|_{L^2(\partial_+S\overline\DD)}^\eta.\]
\end{proof}


\subsection{BvM for Constantly Attenuated X-ray transforms}

We first construct a prior that satisfies Assumption \ref{asspt:prior}. Let $\pi'_f$ be a series prior of regularity $\alpha>0$. generated using the basis of Zernike polynomials. Namely, let $\pi'_f$ be the distribution of
\begin{align}\label{eq: prior_xray}
	f' = \sum_{k=0}^\infty\sum_{\ell=0}^k \sigma_kW_{k,\ell}Z_{k,\ell},
\end{align}
with $\sigma_k \asymp (1+k)^{-\alpha}$ and $W_{k,\ell}$ i.i.d standard normal random variables. By imposing that $\alpha>\beta+1\ (=\beta + d/2)$, we have that $\pi'_f$ is supported on $\Tilde H^\beta(\DD)=\R$. Furthermore, the RKHS of $\pi'_f$ is known to be
\[\HHH = \left\{ f\in \Tilde H^\beta(\DD): \sum_{k=0}^\infty\sum_{\ell=0}^k \sigma_k^{-2}|f_{k,\ell}|^2 <\infty \right\}, \]
which is essentially just $\Tilde H^\alpha(\DD)$. Now, we have already seen in Proposition \ref{prop: stab_xray} that Assumption \ref{asspt:stab} is satisfied with $T=L^2(\DD)$. Thus, to verify \ref{eq:covering}, we need to find $\eps_n$ for which
\[\log N(\eps_n, B_{\Tilde H^\alpha(\DD)}(1), \|\cdot\|_{L^2(\DD)}) \leq n\eps_n^2.\]
By Lemma B.1 in \cite{gugushviliVdV}, the left hand side above is of order $\eps_n^{-2/\alpha}$. This yields $\eps_n = n^{-\alpha/(2+2\alpha)}$ and subsequently $\tau_n = n^{1/(4\alpha+4)}$. Note also that $\eps_n << n^{-1/4}$ since $\alpha>3/2$.  We now have all ingredients to apply Theorem \ref{thm: main2} and obtain a BvM for the attenuation $\theta$. 

\begin{thm}\label{thm: BvM_xray}
	Let $\alpha>\beta+1>2, \ H=\Tilde H^\beta(\DD),$ and $\Theta$ be a compact subset of $\RR$. Assume that the data $X^{(n)}$ in \eqref{eq: xray_model} is generated according to a true pair $(\theta_0,f_0)$, with $\theta_0$ an interior point of $\Theta$ and $f_0\in C^\infty(K)$ with $K\subset\DD^{int}$ compact. Assume further that $(\theta_0, f_0)$ satisfies Condition \ref{condition}. Let $\Pi=\pi_\theta\times\pi_f$ be the prior on $(\theta,f)$ with $\pi_\theta$ having a positive density w.r.t the Lebesgue measure, and with $\pi_f$ the distribution of $n^{1/(4+4\alpha)}f'$ for $f'\sim \pi'_f$, with $\pi'_f$ as in \eqref{eq: prior_xray}. Then the BvM holds at $(\theta_0,f_0)$.
\end{thm}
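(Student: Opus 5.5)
We apply Theorem \ref{thm: main2}, in scenario (ii): $n\eps_n^2\xi_n^2\to 0$ and $\gamma_{\theta_0,f_0}\in\HHH$. The map $\theta\mapsto\|K_\theta f\|$ is not constant here in general, so scenarios (iii)--(iv) are not available.

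The hypotheses are checked in order.
\begin{itemize}
\item Positivity of the information. By Lemma \ref{lemma: xray_info}, since $f_0\in C^\infty(K)$ and $M=\DD$, the least favourable direction $\gamma_{\theta_0,f_0}=(K_{\theta_0}^*K_{\theta_0})^{-1}K_{\theta_0}^*\dot K_{\theta_0}f_0$ lies in $C^\infty(\DD)$. Item (i) of Condition \ref{condition} then gives $\dot K_{\theta_0}f_0\neq K_{\theta_0}\gamma_{\theta_0,f_0}$. By \eqref{eq: info} this yields $\tilde i_{\theta_0,f_0}>0$, exactly as in the proof of Proposition \ref{prop: stab_xray}.
\item Assumption \ref{asspt1} and \eqref{eq: finiteEntropy} hold by Proposition \ref{prop: reg_xray}, using $\beta>1$.
\item Assumption \ref{asspt:stab} holds with $T=L^2(\DD)$ and $\eta=(\beta-1)/\beta$ by Proposition \ref{prop: stab_xray}. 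This uses Condition \ref{condition} and $f_0\in\R=\Tilde H^\beta(\DD)$, which is true because $f_0$ is smooth and compactly supported in the interior.
\item Assumption \ref{asspt:prior} holds for the Zernike series prior \eqref{eq: prior_xray} with $\R=\Tilde H^\beta(\DD)$, since $\alpha>\beta+1$.
\item The entropy bound \eqref{eq:covering} holds with $\eps_n=n^{-\alpha/(2+2\alpha)}$, as in the preceding discussion. This gives $\tau_n=(\sqrt n\eps_n)^{-1}=n^{1/(4+4\alpha)}$, matching the stated prior. Since $\alpha>2$, the exponent $\tau=\alpha/(2+2\alpha)$ lies in $(1/4,1/2)$.
\item We need $f_0\in\HHH=\Tilde H^\alpha(\DD)$, which is used in Theorem \ref{thm:contraction}. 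Smooth compactly supported functions lie in every Zernike--Sobolev space; this follows from the mapping properties in \cite{MonardNicklPternain2021}, e.g. $\Tilde H^s$ containing $H^s_c$ of the interior.
\end{itemize}

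For scenario (ii), membership $\gamma_{\theta_0,f_0}\in\HHH$ follows in the same way. Lemma \ref{lemma: xray_info} gives $\gamma_{\theta_0,f_0}\in C^\infty(\DD)$, and we must show that such smooth functions belong to $\Tilde H^\alpha(\DD)$. This is the main obstacle, because $\gamma_{\theta_0,f_0}$ need not be compactly supported, and Zernike regularity near the boundary differs from ordinary Sobolev regularity. The plan is to invoke the fact that $C^\infty(\overline\DD)\subset\Tilde H^s(\DD)$ for all $s$. This holds because the Zernike--Sobolev scale is generated by a degenerate elliptic operator $\mathcal L$ with eigenfunctions $Z_{k,\ell}$ and eigenvalues of order $k^2$, which maps $C^\infty(\overline\DD)$ to itself (cf. Section 6.4 of \cite{MonardNicklPternain2021}). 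If smoothness only holds in the interior, we would instead choose another least favourable direction or use scenario (i) with an approximation $\gamma_n$.

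It remains to check the rate. With $\xi_n=\eps_n^\eta$, we need $n\eps_n^{2+2\eta}\to0$, i.e. $\frac{\alpha}{1+\alpha}(1+\eta)>1$, equivalently $\alpha\eta>1$, i.e. $\alpha(\beta-1)/\beta>1$. This is the rate check to verify. It holds if $\alpha>\beta/(\beta-1)$, which is implied by $\alpha>\beta+1$ whenever $\beta+1\ge \beta/(\beta-1)$, that is $\beta^2-\beta-1\ge0$, i.e. $\beta\ge(1+\sqrt5)/2$. If $\beta$ is close to $1$, the argument needs the additional restriction $\alpha>\beta/(\beta-1)$, or else $\beta$ chosen large enough; I would state this caveat explicitly. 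With these checks in place, Theorem \ref{thm: main2}(ii) gives the BvM.
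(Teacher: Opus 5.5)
Your route is the paper's: scenario (ii) of Theorem~\ref{thm: main2}, with positivity of the information and $\gamma_{\theta_0,f_0}\in\HHH$ from Lemma~\ref{lemma: xray_info}, and the operator assumptions from Propositions~\ref{prop: reg_xray} and~\ref{prop: stab_xray}. Your explicit check that $f_0\in\HHH$, needed in Theorem~\ref{thm:contraction}, is left implicit in the paper. Your worry about boundary smoothness of $\gamma_{\theta_0,f_0}$ is unnecessary: the lemma gives smoothness on the closed disk, and the paper simply uses $C^\infty(\DD)\subset\tilde H^\alpha(\DD)$.

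Your rate computation is correct, and it is the paper that slips here. With $\eta=(\beta-1)/\beta$ one has $n\eps_n^2\xi_n^2=n^{1-\alpha(1+\eta)/(1+\alpha)}\to0$ iff $\alpha>\beta/(\beta-1)$, not iff $\alpha>2\beta/(3\beta-2)$ as the paper asserts; for $\beta=1.2$, $\alpha=3$ one gets $n^{1/8}\to\infty$. Still, as written your proof does not cover $1<\beta<(1+\sqrt{5})/2$ with $\beta+1<\alpha\le\beta/(\beta-1)$, so it does not prove the statement. Your fallback does not close this: $\beta$ is fixed by the statement, and taking $\R=\tilde H^{\beta'}$ with $\beta\le\beta'<\alpha-1$ only rescues $\alpha>(3+\sqrt{5})/2$.

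The missing observation is that $\eta=(\beta-1)/\beta$ in Proposition~\ref{prop: stab_xray} is not sharp. Apply H\"older to the Zernike coefficients with the exponent that makes the weights cancel:
\[
\|h\|_{L^2(\DD)}\le\|h\|_{\tilde H^{-1/2}(\DD)}^{2\beta/(2\beta+1)}\,\|h\|_{\tilde H^{\beta}(\DD)}^{1/(2\beta+1)}.
\]
Combine this with \eqref{eq: forward_est_xray_dev} at $s=0$ and the stability bound $\|h\|_{\tilde H^{-1/2}}\lesssim\|K_{\theta_0}h\|$ used in that proof. Then item (ii) of Assumption~\ref{asspt:stab} holds with $\eta=2\beta/(2\beta+1)$. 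Now $\alpha\eta>1$ iff $\alpha>1+1/(2\beta)$, which follows from $\alpha>2$, so scenario (ii) applies on the whole stated range.

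There is a second error. $(\sqrt n\eps_n)^{-1}=n^{-1/(2+2\alpha)}$, not $n^{1/(4+4\alpha)}$, so your claim that this ``matches the stated prior'' is false; the paper's discussion before the theorem has the same slip. Theorem~\ref{thm: main2} needs the shrinking scaling $\tau_n=n^{-1/(2+2\alpha)}$. With the literal prior $n^{1/(4+4\alpha)}f'$, the prior is inflated and Assumption~\ref{asspt:prior} (which requires $\tau_n\downarrow0$) fails. Moreover $\pi_f(\|f\|_\R>M)\to1$, so the sieve bound in the proof of Theorem~\ref{thm:contraction} is lost. You should say explicitly that you read the prior as $n^{-1/(2+2\alpha)}f'$.
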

\begin{proof}
	The assumptions on $f_0$ ensure positive information and that $\gamma_{\theta_0, f_0}\in C^\infty(\DD)$ in view of Lemma \ref{lemma: xray_info}. Assumptions \ref{asspt1} and \ref{asspt:stab} are verified in view of Propositions \ref{prop: reg_xray} and \ref{prop: stab_xray} respectively, while Assumption \ref{asspt:prior} is verified in view of the above discussion. Since $\HHH=\Tilde H^\alpha(\DD)\supset C^\infty(\DD)$, we have that $\gamma_{\theta_0, f_0}\in \HHH$. We are thus in scenario (ii) of Theorem \ref{thm: main2}. We therefore need to verify that $n^2\eps_n^{2+2\eta}\to 0$. This can be seen equivalent to the condition $\alpha > (2\beta)/(3\beta-2)$. Observe that for $\beta> 1$, we have that $\beta+1>(2\beta)/(3\beta-2)$. The assumption that $\alpha>\beta+1>2$ is thus enough to conclude the proof. 
\end{proof}

\section{Proofs}\label{sec: proofs}

\subsection{Proof of Theorem \ref{thm: main}}

This is an application of Theorem 12.9 in \cite{vaartghosal}. We apply it with the least favorable transformation $(\theta,f)\mapsto (\theta_0, f+(\theta-\theta_0)\gamma_n)$. Its condition (12.14) is satisfied in view of our assumption \eqref{eq: priorshift}, the posterior consistency
conditions in Theorem 12.9 are also copied in our conditions, and we need only verify (12.13) in \cite{vaartghosal}. 
By our assumptions on the posterior, we may assume that the sets $\Theta_n\times H_n$ are contained in the following set for $M>0$ large enough and for $\eps_n,\xi_n\downarrow 0$ the rates of contraction of the posterior w.r.t $|\theta-\theta_0| + \|K_{\theta_0}(f-f_0)\|_{H_2}$ and $\|\dot K_{\theta_0}(f-f_0)\|_{H_2}$ respectively:
\[\{(\q,h)\in \Theta\times H: |\q-\q_0|<\eps_n,\|K_{\theta_0}(f-f_0)\|_{H_2}<\eps_n, \|\dot K_{\theta_0}(f-f_0)\|_{H_2}<\xi_n, \|f\|_{H}<M\}.\]  
In what follows, we will often drop the subscripts $H_1$ and $H_2$ in the norms' notation when it is clear in context.

Straightforward computations using \eqref{eq: likelihood} yield
\[
\log \frac{{dP_{\theta,f}^n}}{dP_{\theta_0, f+(\theta-\theta_0)\gamma_n}^n} 
= \sqrt{n}(\theta-\theta_0) G_{\theta_0}(f,\gamma_n) - \frac{n}{2}|\theta-\theta_0|^2 \tilde i_{\theta_0, f}(\gamma_n) + R_n(\theta,f),
\]
where 
\begin{align*}
	G_{\theta_0}(f,g)&= \ip{\dot W}{\dot K_{\theta_0}f - K_{\theta_0} g},\\
	\tilde i_{\theta_0, f}(g)&= \|\dot K_{\theta_0}f\|^2 - \|K_{\theta_0}g\|^2,\\
	R_n(\theta,f) &= \sqrt{n}\ip{\dot W}{(K_{\theta}-K_{\theta_0} - (\theta-\theta_0)\dot K_{\theta_0})f}\\
	&\qquad + n(\theta-\theta_0)\ip{K_{\theta_0}(f-f_0)} {K_{\theta_0}(\gamma_n-\gamma_{\theta_0,f})} \\
	&\qquad- n\ip{(K_{\theta}-K_{\theta_0} - (\theta-\theta_0)\dot K_{\theta_0})f}{K_{\theta_0}(f-f_0)}\\
	&\qquad +\frac{n}{2}(\theta-\theta_0)^2\|\dot K_{\theta_0}f\|^2- \frac{n}{2}\|(K_{\theta}-K_{\theta_0})f\|^2\\
\end{align*}
Therefore, condition (12.14) of Theorem~12.9 in \cite{vaartghosal} is satisfied, with $\tilde G_n:=G_{\theta_0}(f_0,\gamma_{\theta_0,f_0})$
and $\tilde i_n=\tilde i_{\q_0,f_0}$, if 
\begin{align}
	\sup_{(\theta,f)\in \Theta_n\times H_n} \frac{\sqrt{n}(\theta-\theta_0)| G_{\theta_0}(f, \gamma_n)-G_{\theta_0}(f_0,\gamma_{\theta_0,f_0})|}{1+n(\theta-\theta_0)^2} &\gaatp 0,\label{eq: LANapprox}\\
	\sup_{(\theta,f)\in \Theta_n\times H_n} \frac{n(\theta-\theta_0)^2|\tilde i_{\theta_0,f} (\gamma_n)-\tilde i_{\theta_0, f_0}|}{1+n(\theta-\theta_0)^2} &\to 0,\label{EqContFI}\\
	\sup_{(\theta,f)\in \Theta_n\times H_n} \frac{R_n(\theta,f)}{1+n(\theta-\theta_0)^2} &\gaatp 0.\label{eq: remaindercondition}\\
\end{align}
The Bernstein-von Mises theorem is then satisfied at $(\q_0,f_0)$ with 
$\Delta_{\theta_0, f_0}^n:= \tilde i_{\theta_0,f_0}^{-1}G_{\theta_0}(f_0, \gamma_{\theta_0,f_0})$, which is exactly $N(0, \tilde i_{\theta_0,f_0}^{-1})$
distributed.

Starting with \eqref{eq: LANapprox}, we use the bound $\sqrt{n}|\theta-\theta_0|/(1 + n(\theta-\theta_0)^2) \leq 1$. The  left hand side can then be bounded by, \[\sup_{f\in H_n}\left|\ip{\dot W}{\dot K_{\theta_0}(f-f_0)}\right| + \left|\ip{\dot W}{K_{\theta_0}(\g_n - \g_{\theta_0, f_0})}\right|.\]
There is no dependence on $f$ in the second term, it is simply a mean zero Gaussian random variable with bounded variance $\rho_n^2\downarrow 0$ by assumption. It therefore converges to 0 in probability. The first term is a centered Gaussian process indexed by $\D_n = \{\dot K_{\theta_0}(f-f_0): f\in H_n\}$. By item (iv) of Assumption \ref{asspt1}, the $S$-norm of elements of $\D_n$ are bounded by \[D_4\|f-f_0\|_{H_1}\leq D_4(\|f\|+\|f_0\|)\leq D_4(M+\|f_0\|),\] where the last inequality follows by our initial assumption on $H_n$. The $H_2$-diameter of $\D_n$ is furthermore bounded by $2\xi_n$, by this same assumption. $\D_n$ is therefore contained in a multiple of the unit ball in $S$ and its $H_2$-diameter is shrinking to 0. By Dudley's bound, we have,
\[\mathbb{E}\sup_{x\in \D_n}|\ip{\dot W}{x}|\lesssim \int_0^{\xi_n} \sqrt{\log N(\xi, \{x: \|x\|_{S}\le 1\}, \|\cdot\|_{H_2})}\,d\xi.\]
By \eqref{eq: finiteEntropy}, the above integral is bounded above by a multiple of $\xi_n$, and hence goes to 0. \eqref{eq: LANapprox} then follows.

To bound \eqref{EqContFI}, it suffices to show that $\|\dot K_{\theta_0}f\|^2 - \|\dot K_{\theta_0}f_0\|^2$ and $\|K_{\theta_0}\g_n\|^2 - \|K_{\theta_0}\g_{\theta_0,f_0}\|^2$ uniformly go to 0 on $H_n$. For the first quantity, we have over $H_n$ that,
\begin{align*}
	\|\dot K_{\theta_0}f\|^2 - \|\dot K_{\theta_0}f_0\|^2&=\ip{\dot K_{\theta_0}(f-f_0)}{\dot K_{\theta_0}(f+f_0)}\\
	&\leq \|\dot K_{\theta_0}(f-f_0)\|\|\dot K_{\theta_0}(f+f_0)\| \\
	&\leq \xi_n(D_1(M+\|f_0\|))\\
	&\to 0 \text{ (as }n\to\infty),
\end{align*}
with the last inequality following from item (i) of Assumption \ref{asspt1} and because we are on $H_n$. The second quantity can be bounded similarly again by item (i), but this time using that $\|\gamma_n-\gamma_{\theta_0,f_0}\| < \rho_n$.

It remains to show \eqref{eq: remaindercondition}. We deal with each term in the expression of $R_n(\theta,f)$ separately.

We begin with the first term by bounding $\sqrt{n}/(1+n(\theta-\theta_0)^2)$ by $1/|\theta-\theta_0|$. This way we reduce the problem to bounding the supremum of $\left|\ip{\dot W}{g}\right|$ for $g\in\mathcal{G}_n$ where,
\[\mathcal{G}_n:=\Bigl\{ g_{\q,f}:=\frac{(K_\theta-K_{\theta_0}-(\theta-\theta_0)\dot K_{\theta_0}) f}{\theta-\theta_0}: 
(\theta, f) \in \Theta_n\times H_n \Bigr\}.\]
It follows from item (iii) of Assumption \ref{asspt1} that the $S$-norm of elements of $\mathcal{G}_n$ is bounded above by $|\theta-\theta_0|\|f\| < \eps_n M$. The same is true for the $H_2$-norm. In particular, this means that $\mathcal{G}_n$ is contained in a multiple of the  unit ball in $S$ and has a shrinking $H_2$-diameter. By the same argument as in the proof of \eqref{eq: LANapprox} involving Dudley's bound, we thus have that $\sup_{g\in\mathcal{G}_n}\left|\ip{\dot W}{g}\right|$ goes to 0 in probability.

The second term is bounded directly using the bound $n|\theta-\theta_0|/(1+n(\theta-\theta_0)^2)\leq \sqrt{n}$ in combination with \eqref{eq: lfdrate}.

For the third term, we use Cauchy-Schwartz in combination with item (iii) of Assumption \ref{asspt1} and the fact that $\|K_{\theta_0}(f-f_0)\|<\eps_n$. This yields the bound $D_3M\eps_n$ for its ratio with $1+n(\theta-\theta_0)^2$, which indeed goes to 0.

Finally, we rewrite the fourth term as an inner product and bound it using items (i)-(iii) of Assumption \ref{asspt1}, namely,
\begin{align*}
	&-\frac{n}{2}\ip{(K_\theta-K_{\theta_0}-(\theta-\theta_0)\dot K_{\theta_0})f}{(K_{\theta}-K_{\theta_0})f + (\theta-\theta_0)\dot K_{\theta_0}f}\\
	&\leq \frac{n}{2}D_3|\theta-\theta_0|^2\|f\|\left(D_2|\theta-\theta_0|\|f\| + D_1|\theta-\theta_0|\|f\|\right)\\
	&\lesssim n(\theta-\theta_0)^2|\theta-\theta_0|\\
	&\leq  n(\theta-\theta_0)^2\eps_n\\
	& = o(1+n(\theta-\theta_0)^2).
\end{align*}

\subsection{Proof of Theorem \ref{thm:contraction}}

The intrinsic metric for the white noise model \eqref{eq: obs} is 
\[d((\theta_1,f_1),(\theta_2,f_2))=\|K_{\theta_1}f_1-K_{\theta_2}f_2\|_{H_2}.\]
By a straightforward adaptation of Theorem 8.31 in \cite{vaartghosal}, contraction at $(\theta_0, f_0)$ in the metric $d$ (i.e. \eqref{eq: contraction_in_K}) on $H_n$ will follow if we can show that,
\begin{align}
	\label{eq:cc1}\Pi((\theta,f): d((\theta,f),(\theta_0,f_0))\leq \eps_n) & \ge e^{-n\eps_n^2/64},\\
	\label{eq:cc2}\sup_{\epsilon>\eps_n}\log N\bigl(\eps/8, \{((\theta,f)\in \Theta\times H_n: d((\theta,f),(\theta_0,f_0)) \le \eps\}, d\bigr) & \le n\eps^2_n,\\
	\label{eq:cc3} \Pi((\Theta\times H_n)^C)&\leq e^{-n\eps_n^2}.
\end{align}

We prove each item separately beginning with \eqref{eq:cc1}. Item (iii) of Assumption \ref{asspt:stab} in combination with item (ii) of Assumption imply the Lipschitz condition \eqref{eq:Lipschitz}, which we recall here for $\theta_1,\theta_2\in\Theta, f_1,f_2\in H$:
\begin{align*}
	d((\theta_1,f_1),(\theta_2,f_2))=\|K_{\theta_1}f_1- K_{\theta_2}f_2\|_{H_2} &\leq D_2|\theta_1-\theta_2|\|f_1\|_{H}\wedge\|f_2\|_{H} \\ &+ C_3\|f_1-f_2\|_T.
\end{align*}
Let then $C_0:= D_2\|f_0\|_{H}\vee C_3$. We have,
\begin{align*}
	&\Pi((\theta,f): d((\theta,f),(\theta_0,f_0))\leq \eps_n)\\
	&\geq \Pi((\theta,f): |\theta-\theta_0| + \|f-f_0\|_T \leq \eps_n/C_0)\\
	&\geq \Pi((\theta,f): |\theta-\theta_0|\leq \eps_n/2C_0, \|f-f_0\|_T\leq \eps_n/2C_0)\\
	&=\pi_\theta(|\theta-\theta_0|\leq \eps_n/2C_0)\cdot\pi_f(\|f-f_0\|_T\leq \eps_n/2C_0).
\end{align*}
Since $\pi_\theta$ admits a density that is bounded away from 0, the first factor in the last line of the above display is bounded below by $e^{-cn\eps_n^2}$ for a small enough constant $c$. Furthermore, by Assumption \ref{asspt:prior} on $\pi_f$, we can use the fact that $f_0\in\HHH$ in combination with Anderson's lemma to obtain:
\begin{align*}
	\pi_f(\|f-f_0\|_T\leq\eps_n/2C_0)&\geq e^{-\frac{1}{2}n\eps_n^2\|f_0\|_\HHH^2}\pi_f(f: \|f\|_T\leq \eps_n/2C_0)\\
	&=e^{-\frac{1}{2}n\eps_n^2\|f_0\|_\HHH^2}\pi_f'(f': \|f'\|_T\leq \sqrt{n}\eps_n^2/2C_0).
\end{align*}
We now define for $\eps>0$ the small ball exponent of $\pi'_f$ by \[\phi_0(\eps):=-\log{\pi'_f(f':\|f'\|_T\leq\eps)}.\]
In view of the before last display, what we want to show is therefore that $\phi_0(\sqrt{n}\eps_n^2)\lesssim n\eps_n^2$. Since we assumed $\eps_n$ was polynomial, we can write $\eps_n \asymp n^{-\tau}$ for $\tau\in(1/4,1/2)$. Using condition \eqref{eq:covering}, straightforward computations yield
\[\mathcal{H}(\eps):=\log{N(\eps,\HHH_1,\|\cdot\|_T)}\leq \eps^{-\frac{1-2\tau}{\tau}}.\] An application of Kuelbs-Li-Linde's theorem in $T\supseteq H_1$ supporting $\pi'_f$ (see for instance Theorem 6.2.1 in \cite{gine_nickl}) then gives that $\phi_0(\eps)\lesssim \eps^{-\frac{2-4\tau}{4\tau-1}}$, because $4\tau>1$. It follows that \[\phi_0(\sqrt{n}\eps_n^2) = \phi_0(n^{\frac{1-4\tau}{2}}) \lesssim n^{-\frac{4\tau-1}{2}\cdot\frac{4\tau-2}{4\tau-1}} = n^{1-2\tau} \asymp n\eps_n^2.\]

We can now proceed with the proof of \eqref{eq:cc2}. This can be reduced to showing that for all $m = m(M)$ large enough, 
\[\log N(m\eps_n, \Theta\times H_n, d) \leq n\eps_n^2.\]
Let $C_0':= D_2M\vee C_3$. By the Lipschitz condition \eqref{eq:Lipschitz} and the definition of $H_n$, we can derive the following upper bound for all $m$
\[\log{N(m\eps_n, \Theta\times H_n, d)}\leq \log{N(m\eps_n/2C_0', \Theta, |\cdot|)} + \log{N(m\eps_n/2C_0', H_n, \|\cdot\|_T)}. \]
The first term is bounded by a term of order $\log{\eps_n^{-1}}\asymp \tau \log(n) \leq n^{1-2\tau} = n\eps_n^2$, if $m$ is taken large enough. To bound the second term, it suffices to realise that we can cover $H_n$ with $\|\cdot\|_T$-balls of radius $m\eps_n/2C_0'$ by covering $M\HHH_1$ with $\|\cdot\|_T$-balls of radius $m\eps_n/4C_0'$ if $m/2C_0' > M$. We are thus left with showing that \[\log N(m\eps_n/4C_0', M\HHH_1, \|\cdot\|_T)\leq n\eps_n^2,\] which follows from condition \eqref{eq:covering} for $m>4C_0'$.
 
We finally prove \eqref{eq:cc3}.  Observe that $\Pi((\Theta\times H_n)^C) = \pi_f(H_n^C)$. We will first establish that \[\pi_f(\|f\|_\R> M)\leq e^{-An\eps_n^2},\] for some constant $A>1$. To do so we observe that since $\R$ is separable, its norm can be respresented as $\|f\|_\R = \sup_{\ell\in \mathcal{L}} |\ell(f)|$, for $\mathcal{L}$ a countable family of linear forms on $\R$. This means that if $f'\sim\pi'_f$, then $\{\ell(f'):\ell\in\mathcal{L}\}$ is a centered Gaussian process with countable index set that is supported on $\R$ by Assumption \ref{asspt:prior}, i.e. that $\|f'\|_\R = \sup_{\ell\in\mathcal{L}}|\ell(f')\|<\infty$ almost surely. Theorem 2.1.20 in \cite{gine_nickl} can then be applied to show that $\mathbb{E}[\|f'\|_R] <\infty$ and that
\begin{align*}
	\pi_f(\|f\|_\R) &= \pi'_f(\|f'\|_\R > M\sqrt{n}\eps_n)\\
	&\leq \pi'_f(\|f'\|_\R - \mathbb{E}[\|f'\|_\R]> M\sqrt{n}\eps_n/2)\\
	&\leq e^{-cM^2n\eps_n^2} \text{ (for some $c>0$)}\\
	&\leq e^{-An\eps_n^2},
\end{align*}
for $M$ large enough since $\sqrt{n}\eps_n\uparrow\infty$. It now remains to show that:
\begin{align*}
	&\pi_f(f = f_1+ f_2: \|f_1\|_T\leq M\eps_n, \|f_2\|_\HHH\leq M) \\ 
	&= \pi_f'(f'=f_1'+f_2':\|f_1'\|_T\leq M\sqrt{n}\eps_n^2, \|f_2'\|_\HHH \leq M\sqrt{n}\eps_n)\\
	&\geq 1 - e^{-An\eps_n^2}.
\end{align*}
Similar to the proof of \eqref{eq:cc1} above, we may obtain using Kuelbs-Li-Linde and condition \eqref{eq:covering}, that
\[ -\log{\pi'_f(f':\|f'\|_T \leq M\sqrt{n}\eps_n^2)}= \phi_0(M\sqrt{n}\eps_n^2) \asymp \phi_0(Mn^{\frac{1-4\tau}{2}})\leq cM^{-\frac{2-4\tau}{4\tau-1}}\sqrt{n}\eps_n^2,\]
for some $c>0$. Taking $M>(A/c)^{-\frac{4\tau-1}{2-4\tau}}$ then implies $\phi_0(M\sqrt{n}\eps_n^2)\leq An\eps_n^2$. Furthermore, it is well known that for $\Phi$ the cumulative distribution function of a standard normal, we have
\[A_n:=-2\Phi^{-1}(e^{-An\eps_n^2})\simeq \sqrt{An}\eps_n.\]
By taking $M>\sqrt{A}$ and subsequently using Borell's inequality, we may finish the proof as follows: 
\begin{align*}
	&\pi_f'(f'=f_1'+f_2':\|f_1'\|_T\leq M\sqrt{n}\eps_n^2, \|f_2'\|_\HHH \leq M\sqrt{n}\eps_n)\\
	&\geq  \pi_f'(f'=f_1'+f_2':\|f_1'\|_T\leq M\sqrt{n}\eps_n^2, \|f_2'\|_\HHH \leq A_n)\\
	&\geq \Phi(\Phi^{-1}(\pi_f'(f': \|f'\|_T\leq M\sqrt{n}\eps_n^2)) + A_n)\\
	&= \Phi(\Phi^{-1}(e^{-\phi_0(\sqrt{n}\eps_n^2)})+A_n)\\
	&\geq \Phi(\Phi^{-1}(e^{-An\eps_n^2}) + A_n)\\
	&=\Phi(-\Phi^{-1}(e^{-An\eps_n^2}))\\
	&=1-e^{-An\eps_n^2}.
\end{align*}

The statement about the posterior given $\theta_0$, i.e. $\Pi^{\theta=\theta_0}(\cdot \mid X^{(n)})$ can be proven similarly, omitting the $\theta$-parts of the proof.

\subsection{Proof of Lemma \ref{lemma: info_deconv}}

We begin by observing that since $f\in H$ admits a derivative $f'$, an alternative way of writing $\dot K_\theta f$ is as $K_\theta (-f') = -g*f'(\cdot-\theta)$. From the discussion in subsection 2.1, we know that the least favourable direction $\gamma_{\theta,f}$ is such that $-K_\theta \gamma_{\theta, f}$ is the projection of $-\dot K_\theta f$ onto the closure of $K_\theta H$. 
	
In Model \ref{model: symmetric}, $H$ is the symmetric (even) functions. It follows that $f'$ is odd in this case. Since $g$ is even, convolution with $g$ preserves parity, hence $g*f'$ is odd. It follows that $\dot K_\theta f \perp K_\theta H$ in Model \ref{model: symmetric}, and hence that $\gamma^{(1)}_{\theta, f} = 0$. The expression for the efficient information then stems from \eqref{eq: info}. 
	
To find the least favourable direction in Model \ref{model: zeroloc} we proceed similarly. We will be making use of the following claim that is proved at the end. 
\begin{claim}\label{claim: location}
	Convolution with $g$ is a location preserving operation. Furthermore, for every $\theta, \ K_\theta^* = K_{-\theta}$. In particular, convolution with $g$ is a self-adjoint operation.
\end{claim}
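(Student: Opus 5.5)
The plan is to treat both statements directly on the torus $[-1/2,1/2]$, identified with the circle by 1-periodicity, using only Fubini and changes of variables.

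For the adjoint identity I would write, for $f,h\in L^2[-1/2,1/2]$,
\[\ip{K_\theta f}{h}=\int\int f(t-u)g(u-\theta)\,du\,\overline{h(t)}\,dt.\]
Substituting $s=t-u$ and using Fubini (all functions are 1-periodic and square integrable, $g$ is bounded), this becomes
\[\int f(s)\,\overline{\int h(t)\,\overline{g(t-s-\theta)}\,dt}\,ds.\]
The kernel $g$ is real and symmetric, so $g(t-s-\theta)=g(s-t+\theta)$. Setting $v=s-t$ turns the inner integral into $\int h(s-v)g(v+\theta)\,dv=K_{-\theta}h(s)$. Hence $K_\theta^*=K_{-\theta}$, and taking $\theta=0$ shows that convolution with $g$ is self-adjoint. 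As a cross-check, the Fourier representation gives the same conclusion: $g_k$ is real by symmetry, and the multiplier $g_ke^{2\pi ik\theta}$ has conjugate $g_ke^{-2\pi ik\theta}$.

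For location preservation, the meaning consistent with Lemma \ref{lemma: info_deconv} is $\ip{g*h}{S}=\ip{h}{S}$, where $S$ is the saw-tooth function. By the self-adjointness just proved, $\ip{g*h}{S}=\ip{h}{g*S}$, so it suffices to show $g*S=S$ up to an additive constant; a constant would shift the inner product by a multiple of $\int h$. This is the only genuine computation. I would do it in Fourier, where for $k\neq0$ the saw-tooth has coefficients $S_k\propto (-1)^k/k$, with the sign depending on the convention for $e_k$, and $S_0=0$. Then $(g*S)_k=g_kS_k$, and $g*S=S$ would require $g_k=1$ for every $k$, which is false.

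So literal invariance fails, and the claim must rest on a weaker fact. I would test $h\in H$, i.e. $\ip{h}{S}=0$, and check whether $g*h$ again has location zero when $g$ has mass $1$ and location $0$. Directly,
\[\int t\,(g*h)(t)\,dt=\int\int (s+u)\,h(s)g(u)\,ds\,du\]
in the real-line picture, which equals $\mathrm{loc}(h)\int g+\mathrm{loc}(g)\int h=\mathrm{loc}(h)$. This is the first-moment additivity of convolution. It is exact on $\RR$ but only approximate on the torus, because $S$ has a jump at $\pm1/2$ where the wrap-around enters. I expect this wrap-around to be the main obstacle. My first attempt would be to write $S(s+u)=S(s)+u-\mathbf{1}\{\text{wrap}\}$ and show that the correction terms integrate to zero using the symmetry of $g$. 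Failing that, I would restrict the claim to what is actually used, namely that $H$ is invariant under convolution with $g$ and that $\ip{A^*h}{S}=\ip{h}{S}$ for $h\in H$. In the worst case I would weaken the definition of location-preserving to ``maps $H$ into $H$'', which is all that the least favourable direction argument in Model \ref{model: zeroloc} needs.
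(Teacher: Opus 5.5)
Your proof of $K_\theta^*=K_{-\theta}$ is correct and is essentially the paper's argument: Fubini, a change of variables in which every integrand is 1-periodic in the shifted variable, and the symmetry of $g$. Taking $\theta=0$ gives self-adjointness.

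Your diagnosis of the first sentence is also correct, and here it is the paper's proof that is at fault. After substituting $u=t-s$, it integrates $u$ over $[-1/2,1/2]$ rather than $[-1/2-s,1/2-s]$. That shift is not allowed, because $u\mapsto(u+s)g(u)$ is not 1-periodic. Done correctly, the computation gives $\ip{g*h}{S}=\ip{h}{g*S}$ with
\[
(g*S)(s)-S(s)=-\mathrm{sign}(s)\int_{1/2-|s|}^{1/2}g(w)\,dw,\qquad |s|<1/2 .
\]
So $\ip{g*h}{S}=\ip{h}{S}$ holds only when there is no wrap-around, e.g.\ $g$ supported in $[-a,a]$ and $h$ in $[-(1/2-a),1/2-a]$. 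Already $h=\sin(2\pi\,\cdot)$ gives $\ip{g*h}{S}=g_1/(2\pi)$, which differs from $\ip{h}{S}=1/(2\pi)$ unless $g_1=1$.

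The gap is in your plan after that point, in three places.

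\textbf{The correction terms.} Showing that the wrap-around corrections integrate to zero cannot succeed; the display above shows they do not.

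\textbf{The fallback.} The claim that convolution with $g$ maps $H=S^\perp$ into itself is also false for every non-constant $g$. By the self-adjointness you proved, $g*(S^\perp)\subseteq S^\perp$ iff $g*S\in\mathrm{span}(S)$. Since all $S_k\neq0$ for $k\neq0$, this holds iff $g_k$ takes the same value for all $k\neq 0$. Because $g_k\to0$, that forces $g_k=0$ for $k\neq0$, i.e.\ $g$ constant. Concretely, $\ip{\sin(2\pi k\,\cdot)}{S}=(-1)^{k+1}/(2\pi k)$, so $h=\sin(2\pi\,\cdot)+2\sin(4\pi\,\cdot)$ lies in $H$. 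Yet $\ip{g*h}{S}=(g_1-g_2)/(2\pi)$, which is nonzero unless $g_1=g_2$.

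\textbf{Sufficiency for the lemma.} The weaker property would not suffice for Lemma \ref{lemma: info_deconv} anyway. The steps $\ip{AS}{S}=\ip{S}{S}$ and $\|g*AS\|^2=\ip{S}{S}$ apply the location property to $S$ and $AS$, which are not in $H$. In fact nothing can be rescued there. Write $f'=\lambda S+r$ with $r\perp S$; Cauchy--Schwarz in Fourier then gives
\[
\inf_{h\in H}\|\dot K_\theta f-K_\theta h\|^2=\inf_{h\in H}\|g*(f'+h)\|^2=\frac{\lambda^2\|S\|^4}{\sum_{k\neq0}|S_k|^2|g_k|^{-2}} .
\]
Since $|S_k|=1/(2\pi|k|)$, this is $0$ as soon as $|g_k|\lesssim|k|^{-1/2}$.

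So the right conclusion of your analysis is that the first sentence of the claim is false on the torus. With it falls the value $\lambda^2/12$ for the efficient information in Model \ref{model: zeroloc}, and no weakened notion of location preservation repairs this.
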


Now, in view of what was stated in the beginning of the proof, $\gamma^{(2),A}_{\theta, f}$, hereby referred to as $\gamma$ for ease of notation, must be in $H$ and must satisfy:
	
\[
\ip{\dot{K}_{\theta} f-K_{\theta}\gamma}{K_{\theta} h} = 0 \;\;\; \forall h\in H.
\]
That $\gamma\in H$ is readily verified by definition of $\lambda$ and the location preserving property of $A^*$ since
\begin{align*}
	\ip{\gamma}{S} &= \lambda \ip{AS}{S} - \ip{f'}{S}\\
	 &= 12\ip{f'}{S}\ip{S}{A^*S} - \ip{f'}{S}\\
	 &= 12\ip{f'}{S}\ip{S}{S}-\ip{f'}{S}\\
	 &=0,
 \end{align*}
where we used that $\ip{S}{S} = \int_{-1/2}^{1/2}t^2dt = 1/12$. Concerning the condition in the before last display above, it can be equivalently formulated as
\[
\ip{K_\theta(-f'-\gamma)}{K_\theta h} = \ip{-f'-\gamma}{K_{-\theta} K_\theta h} = \ip{-f'-\gamma}{(g*g)*h} = 0 \;\;\; \forall h\in H.
\]
Since $\gamma = -f'+\lambda AS$, the location preserving property of $A^*$ and of convolution with $g$ reduces the above display to
\[\ip{AS}{g*g*h} = \ip{S}{A^*g*g*h} = \ip{S}{h}= 0 \;\;\; \forall h\in H,\]
which is true  by definition of $H$. We finally compute the efficient information using \eqref{eq: info} in combination with the location preserving properties of $A^*$ and $g*\cdot$ as well as self-adjointness of $g*\cdot$.
\begin{align*}
	\tilde i^{(2)}_{\theta, f} &= \|\dot K_\theta f - K_\theta \gamma\|^2\\
	&= \|K_\theta (-f' - \gamma)\|^2\\
	&= \lambda^2\|g*AS\|^2\\
	&= \lambda^2\ip{g*AS}{g*AS}\\
	&= \lambda^2 \ip{S}{S}\\
	&=\lambda^2/12.
\end{align*}

\subsubsection*{Proof of Claim \ref{claim: location}}
For the first part of the claim, observe that by Fubini and substitution $u = t-s$, we have
\begin{align*}
	\ip{g*h}{S} &= \int_{-1/2}^{1/2} t \int_{-1/2}^{1/2} g(t-s)h(s) ds dt\\
	&= \int_{-1/2}^{1/2} h(s) \int_{-1/2}^{1/2} (u+s) g(u) du ds\\
	&= \int_{-1/2}^{1/2} h(s) \left[ s\int_{-1/2}^{1/2} g(u) du + \int_{-1/2}^{1/2} u g(u) du \right] ds\\
	&= \int_{-1/2}^{1/2}h(s)(s\cdot 1 +0)ds\\
	&= \ip{h}{S},
\end{align*}
where we used that $g$ has mass equal to 1 and that it has location equal to 0 because it is symmetric.

For the second part of the claim, let $h\in L^2[-1/2,1/2]$. We have by Fubini,
\begin{align*}
	\ip{K_\theta f}{h}_{L^2}&=\int_{-1/2}^{1/2}\int_{-1/2}^{1/2}g(t-u)f(u-\theta)\cdot h(t) dtdu\\
	&=\int_{-1/2}^{1/2}\int_{-1/2}^{1/2}g(u-t)f(u-\theta) h(t) dtdu \text{ (by symmetry of $g$)}\\
	&=\int_{-1/2}^{1/2}g*h(u) f(u-\theta)du\\
	&=\int_{-1/2-\theta}^{1/2-\theta}g*h(s+\theta) f(s)ds\\
	&=\int_{-1/2}^{1/2}g*h(s+\theta) f(s)ds \text{ (by 1-periodicity)}\\
	&=\ip{f}{K_{-\theta}h}_{L^2}.
\end{align*}

\subsection{Proof of Lemma \ref{lemma: xray_info}}

Let $(y,w) = \p_t(x,v)$. By definition of the geodesic flow, the point $y$ is at distance $t$ of $x$. The geodesic started at $y$ going backwards, i.e. along $-w$ therefore reaches the boundary point $x$ at time $t$, so that $\tau(y,-w) = t$. Besides, $F(\p_t(x,v))=(x,v)$ by definition of the footpoint map. It follows that $A_\chi (\p_t(x,v)) = \int_0^t \chi(\pi(\p_s(x,v)))dt$. The expression for $K_\theta f $ in \eqref{eq: K_theta_xray} then directly follows from \eqref{eq: attenuated_xray} applied to $a=\theta\cdot\chi$, while \eqref{eq: dev_K_theta_xray} is obtained from differentiation w.r.t $\theta$. 

The geodesic flow is smooth on $SM$, and recall that both $F$ and $\tau$ are smooth on $SM\backslash \partial_0SM$. Therefore $A_\chi\in C^\infty(SM)$ because $\chi\in C^\infty(M)$ and is supported away from $\partial M$. Similarly, if $f\in C^\infty(K)$, then $\dot K_{\theta}f\in C^\infty(\partial_+SM)$. Let now $h:=\dot K_{\theta}f$. We want to show that $K_{\theta}^*h\in C^\infty(M)$. 
The backprojection formula gives us that
\begin{align}\label{eq: last_adjoint}
	K_{\theta}^*h(y) = \int_{S_yM}h(F(y,w))e^{-\theta A_\chi(y,w)}dS(w).
\end{align}
Let now $\delta:=\text{dist}(K,M)$. We have that $\delta>0$ by assumption that $K$ is compact within the interior of $M$. Define then the open set \[U_{\delta}:=\{(x,v)\in SM: \tau(F(x,v)) <\delta\}.\]
 $U_\delta$ contains the points in $SM$ lying on ``short geodesics" that spend only a ``time"(or distance) $\delta$ within $M$ before leaving it. We will use two important facts about $U_\delta$. The first one is that by construction, those short geodesics never reach $K$, and hence if $(x,v)\in U_\delta$, then $\pi(\p_t(F(x,v)))\notin K$. The second fact is that $U_\delta\supset \partial_0SM$. This is because if $(x,v)\in \partial_0SM$, then $\tau(F(x,v))=\tau(x,v)=0$ because it lies on a glancing geodesic that never enters $M$.
 
 From the first fact above we obtain that $\forall (x,v)\in U_\delta, \ f(\pi(\p_t(F(x,v)))) = 0$. This implies that $h(F(y,w)) = 0$ if $(y,w)\in U_\delta$, and more specifically that the integrand of \eqref{eq: last_adjoint} vanishes on $U_\delta$. Since $U_\delta$ is open we obtain that all derivatives of the integrand also vanish on $U_\delta$.  In particular, the integrand is smooth on $U_\delta$.
 
 From the second fact we obtain that $SM\backslash \overline U_{\delta/2}$ is a strict subset of $SM\backslash \partial_0SM$. By smoothness of $F$ on $SM\backslash\partial_0 SM$, we thus have smoothness of the integrand on $SM\backslash \overline U_{\delta/2}$ by composition. 
 
 We thus have that the integrand is smooth on each of the open sets $U_\delta$ and $SM\backslash \overline U_{\delta/2}$ that cover $SM$, and that it vanishes on their intersection. We thus conclude that the integrand is in $C^\infty(SM)$. Taking an integral over the fiber $S_yM$ in \eqref{eq: last_adjoint} does not affect smoothness, so $K_\theta^*h\in C^\infty(M)$. 
 
 The LFD $\gamma_{\theta,f}$ is the minimiser of $\|\dot K_\theta f - K_\theta g\|_{L^2(\partial_+SM)}$ for $g\in H$. By Theorem 2.2. in \cite{monard2019}, $K_\theta^*K_\theta$ has a well defined inverse in $C^\infty(M)$. The LFD is then easily seen to be given by $(K_\theta^*K_\theta)^{-1}K_\theta^*\dot K_\theta f$. As per this theorem, we also obtain that in general, $\gamma_{\theta, f}\in \{d_M^{-1/2}g:g\in C^\infty(M)\}$ for $d_M$ any $C^\infty$ function that equals the Riemannian distance near the boundary and is positive in the interior of $M$. 
 
 
 Finally in the case that $M$ is the unit disk $\DD$, we have by Theorem 6.4 in \cite{MonardNicklPternain2021} that $K_\theta^*K_\theta$ is an isomorphism of $C^\infty(M)$ since the attenuation $i\theta\chi$ is compactly supported and trivially skew-hermitian. We thus have that $\gamma_{\theta, f}\in C^\infty(M)$ in that case.

\section*{Acknowledgments}

The authors would like to thank Richard Nickl and Gabriel Paternain for valuable remarks.

\bibliographystyle{plain}
\bibliography{biblio.bib}

\end{document}